\theoremstyle{plain}
\newtheorem{theorem}{Theorem}[section]
\newtheorem{lemma}{Lemma}[section]
\newtheorem{remark}{Remark}[section]
\newtheorem{proposition}{Proposition}[section]
\newtheorem{corollary}{Corollary}[section]
\newif \ifLastSection \LastSectionfalse
\numberwithin{equation}{section}
\begin{document}
	
	\title{{\bf {Boundary effect on asymptotic behaviour of solution to the hyperbolic-parabolic chemotaxis system}}}

	\author{Nangao Zhang\thanks{Corresponding author. \authorcr Email addresses:
			mathngzhang@163.com
			.}}
	\affil{ \normalsize School of Mathematics and Statistics, Wuhan Textile University, Wuhan, 430200, P.R. China}
	
	\date{}
	
	\maketitle
	
	\textbf{{\bf Abstract:}} 
	This paper  is concerned with the global existence and stability of solution to the quasi-linear hyperbolic-parabolic chemotaxis system on the half-line, which was proposed in \cite{Ambrosi1} to primarily describe the formation of coherent vascular networks observed in vitro experiment. 
	Under two different boundary conditions, we first establish the existence and uniqueness of the solution of the asymptotic state equation (the so-called nonlinear diffusion wave), and then prove that the solution of the original equation is nonlinearly asymptotically stable. 
	Additionally, we obtain the convergence rates for both types of boundary conditions.

	\bigbreak \textbf{{\bf Key words}:}  Hyperbolic-parabolic chemotaxis system, half-line, nonlinear diffusion wave.
	
	\bigbreak \textbf{{\bf AMS Subject Classification}:} 35A01, 35B40, 35B45, 35K57, 92C17.
	
	\tableofcontents
	\section{Introduction}\label{sec1}
		Angiogenesis refers to the process of forming new blood vessels from existing capillaries or posterior capillaries, which is a complex process involving many cells and molecules. 
In recent years, due to its importance in pathological and physiological conditions, this phenomenon has become the subject of many experimental studies. 
In order to study this complex phenomenon and accurately describe the key features of human vascular experiments in vitro, Ambrosi et al. (cf.\cite{Ambrosi1, Ambrosi2}) proposed the following quasi-linear hyperbolic parabolic syatem:
	\begin{equation}\label{sanweiqngxing}
	\left\{
	\begin{array}{lr}
		\partial_t\rho+\nabla\cdot\left(\rho u\right)=0, \\[2mm]
	\partial_t\left(\rho u\right)+\nabla\cdot\left(\rho u\otimes u\right)+\nabla p(\rho)=\mu\rho\nabla\phi-\alpha\rho u,\\[2mm]
	\partial_t\phi=D\Delta\phi+a\rho-b\phi,
	\end{array}
	\right.
\end{equation}
	where $(x,t) \in \mathbb{R}^n\times \mathbb{R}^+ ~ (n\geq 1)$.  
	The unknown functions  $\rho=\rho(x,t)\geq0$, $u=u(x,t) \in \mathbb{R}^n$ and $ \phi=\phi(x,t)\geq0$ represent the density, velocity of endothelial cells and the concentration of the chemoattractant respectively.
	 The density-dependent quantity $p(\rho)$ is the pressure function which satisfies $p'(\rho)>0$ for $\rho>0$.
	In addition, $\alpha>0$ is the damping coefficient, $\mu>0$ is the strength of cells response to the chemoattractant concentration gradient, $D>0$ is the diffusion coefficient of the chemoattractant, and $a>0$ and $b>0$ denote the secretion and death rates of the chemoattractant respectively.

We first give a brief review of the current research status of system \eqref{sanweiqngxing}. 
As an important class of biological models, this system has attracted widespread attention from numerous researchers since its proposal. 
However, due to the particularity of its structure, studies on this system remain very limited and far from satisfactory. 
Di Russo et al. \cite{Russo1,Russo2} proved that in multidimensional cases, system \eqref{sanweiqngxing} admits global strong solutions without vacuum by combining characteristics of the hyperbolic part and the parabolic part; these solutions converge to constant states, and exhibit algebraic decay if the perturbations are sufficiently small. 
Chavanis et al. \cite{sire} studied the relationship between \eqref{sanweiqngxing} and the standard Keller-Segel system through asymptotic analysis, suggesting that the solutions of the system share the same limit under the strong friction limit. 
Di Francesco et al. \cite{Frances} rigorously justified the conclusions in \cite{sire} by using energy methods and compensated compactness tools. 
Later, Hong et al. \cite{ZAwang2} investigated the existence and stability of phase transition steady states for \eqref{sanweiqngxing} on the half-line $\mathbb{R}^+$ with Dirichlet boundary conditions. 
Recently, Liu et al. (cf. \cite{Dong,QQL,Liu-Peng-wang-2022}) proved that for the Cauchy problem of equation \eqref{sanweiqngxing} in three or one spatial dimension, the solution exists globally and tends respectively toward a linear diffusion wave or a nonlinear diffusion wave. Liu et al. \cite{Liu-Peng-wang-2024} studied the relaxation limit problem of solution in three-dimensional space by using energy method.
For some results concerning numerical simulations of this system, refer to \cite{CCCC, cwshu, yalihanshujiashe,ZAwang1} and references therein.

In physics, damping effect usually leads to nonlinear diffusion in dynamic system. The strict mathematical proof of this inference was first discovered by Hsiao and Liu \cite{Hsiao-Liu1992} in their research on damped compressible Euler equations in the whole space. Subsequently, by applying more detailed energy estimates, Nishihara et al. \cite{Nishihara1996, Nishihara2000}  succeeded in improving the convergence rates to be optimal for the same problem. 
For the case on the half-line, Nishihara and Yang \cite{Nishihara-Yang1999} also obtained similar conclusions. Since then,  many researchers have made a series of improvements from different aspects, refer to \cite{Huang-Marcati-Pan2005, Zhu-Jiang2009, Luo2016,Marcati-Mei2000,Matsumura-Nishihara2023,Mei2005,Mei2010, Zhang2023} and the references therein. 
The purpose of this paper is to study this kind of diffusion phenomenon in quasi-linear hyperbolic-parabolic equations on a half-line.

To demonstrate our ideas, we use variables $(\rho,m,\phi)(x,t)$ instead of $(\rho,u,\phi)(x,t)$, where  $m=\rho u$  denotes the momentum of cells, and then rewrite the equations  \eqref{sanweiqngxing} for $(x,t)\in\mathbb{R}^+\times\mathbb{R}^{+}$ as the following form:	
		\begin{equation}\label{1}
		\left\{
		\begin{array}{lr}
			\rho_t+m_x=0, \\[2mm]
			m_t+\left(\frac{m^2}{\rho}\right)_x+p(\rho)_x=\mu\rho\phi_x-\alpha m,\\[2mm]
			\phi_t=D\phi_{xx}+a\rho-b\phi.
		\end{array}
		\right.
	\end{equation}
 The initial data is given by
	\begin{equation}\label{3}
		(\rho,m,\phi)(x,t)|_{t=0}=(\rho_0,m_0,\phi_0)(x)\to (\rho_{+},m_{+},\phi_{+}), \;\;\;
		\mbox{as} \;\;\;x \to +\infty,
	\end{equation}
with the following boundary conditions
\begin{equation}\label{3a}
 m(0,t)=0, \ \ \ \phi_x(0,t)=0,
 \end{equation}
or 
\begin{equation}\label{3b}
 m_{x}(0,t)=0,\ \ \ \phi(0,t)=\frac{a}{b}\rho_0(0).	
 \end{equation}
Here $\rho_{+}>0$, $m_{+}$ and $\phi_{+}$ are constants. 

According to the method of asymptotic analysis, the solution $\left(\rho,m,\phi\right)(x,t)$ of the initial-boundary value problem (IBVP) \eqref{1}--\eqref{3a} or \eqref{1}--\eqref{3} and \eqref{3b} is expected to converge to the nonlinear diffusion wave $\left(\bar\rho,\bar m,\bar\phi\right)(x,t)$, which is the solution to the following system:
	\begin{equation}\label{6}
		\left\{
		\begin{array}{lr}
			\bar\rho_t+\bar m_x=0,      \\[2mm]
			p(\bar\rho)_x=\mu\bar\rho\bar\phi_x-\alpha\bar m, \\[2mm]
			a\bar\rho=b\bar\phi,\\
		\end{array}
		\right.\;\;\;{\rm or \ equivalently,}\;\;\;\;\;\;
		\left\{
		\begin{array}{lr}
			\bar\rho_t-\frac{1}{\alpha}q(\bar\rho)_{xx}=0,     \\[2mm]
			\bar m=-\frac{1}{\alpha}q(\bar\rho)_x, \\[2mm]
			\bar\phi=\frac{a}{b}\bar\rho, \\
		\end{array}
		\right.
	\end{equation}
	with 
	\begin{equation}\label{7}
	\left\{
		\begin{array}{lr}
		\bar\rho_x(0,t)=\bar m(0,t)=\bar\phi_x(0,t)=0,\\[2mm]
		(\bar\rho,\bar m,\bar\phi)(x,t)\to\left(\rho_{+},0,\frac{a}{b}\rho_{+}\right),\;\;\;\mbox{as}\;\;\;x\to+\infty,
		\end{array}
		\right.
	\end{equation}
	or 
	\begin{equation}\label{8}
		\left\{
		\begin{array}{lr}
		\bar{\rho}(0,t)=\rho_0(0), \bar{m}_{x}(0,t)=0,\bar\phi(0,t)=\frac{a}{b}\rho_0(0),\\[2mm]
		(\bar\rho,\bar m,\bar\phi)(x,t)\to\left(\rho_{+},0,\frac{a}{b}\rho_{+}\right),\;\;\;\mbox{as}\;\;\;x\to+\infty.
		\end{array}
		\right.
	\end{equation}
	Here we denote
	\begin{equation}\notag
		q(\bar\rho):=p(\bar\rho)-\frac{a\mu}{2b}\bar\rho^2.
	\end{equation}

With the above analysis, the first problem we face next is to give the existence and decay properties of solution to system \eqref{6}--\eqref{7} or \eqref{6} and \eqref{8}.
For the first type of boundary, we will provide it through the energy method (see Lemma \ref{l3.1}); while for the second type of boundary, we will directly present it based on the definition of the self-similar solution (see \eqref{14.1}--\eqref{14.2}).
Secondly, it can be observed that there exist some gaps between the solution of the original equation and the nonlinear diffusion wave at $x=+\infty$.
Therefore, we need to construct a set of correction function $(\hat\rho,\hat m,\hat\phi)(x,t)$ to eliminate these gaps. 
To achieve this goal, through careful observation, we first analyze the behavior of solutions to \eqref{1}--\eqref{3a} or \eqref{1}--\eqref{3} and \eqref{3b} at the far field,
and then guess that the correction function $(\hat\rho,\hat m,\hat\phi)(x,t)$ should satisfies a new system. 	
By skillfully giving the initial value and boundary conditions of the new system, we obtain $(\hat\rho,\hat m,\hat\phi)(x,t)$ (see Section \ref{sec2} for more details). 
Consequently, we are able to prove the stability of the nonlinear diffusion wave for \eqref{1}--\eqref{3a} or \eqref{1}--\eqref{3} and \eqref{3b}.
	
	The arrangement of the present paper is as follows.
	In Section \ref{sec2}, we reformulate \eqref{1}--\eqref{3} and give the main results.
	We prove Theorem \ref{thm1} and Theorems \ref{thm2}--\ref{thm3} in Section \ref{s3} and Section \ref{s4} respectively.

\vspace{3mm}
	
	{\bf Notations.}
In the following, $C$ and $c$ denote the generic positive constants independent of the $x$ and $t$. For functions $f$  and $g$, $f\lesssim g$ denotes $f \leq C g$.
$L^{p}=L^{p}(\mathbb{R}^{+}) ~ (1\leq p\leq \infty)$ is an usual Lebesgue space with the norm
\begin{equation}\notag
 \|f\|_{L^{p}}=\left(\int_{\mathbb{R}^{+}}|f(x)|^{p}{\rm d}x\right)^{\frac{1}{p}}, ~~~ 1\leq p< \infty~~~\mbox{and} ~~~ \|f\|_{L^{\infty}}=\sup \limits_{\mathbb{R}^{+}}|f(x)|.
\end{equation}
For any integer $l \geq 0$, $H^{l}$ denotes the usual $l$-th order Sobolev space on $\mathbb{R}^{+}$ with its norm
\begin{equation}\notag
 \|f\|_{l}=\left(\sum_{j=0}^{l}\left\|\partial_{x}^{j} f\right\|^{2}\right)^{\frac{1}{2}}, ~~~ \quad\|\cdot\|=\|\cdot\|_{0}=\|\cdot\|_{L^{2}}.
\end{equation}

	\section{Reformulation of the  problem and main results}\label{sec2}

In this section, we first investigate the exact difference between the solution $\left(\rho,m,\phi\right)(x,t)$ to \eqref{1}--\eqref{3a} or \eqref{1}--\eqref{3} and \eqref{3b} and the diffusion wave at $x\to+\infty$.
Then, as shown in \cite{Hsiao-Liu1992,Nishihara-Yang1999}, we construct the correction function to deal with this difference so that the estimation can be done in $L^2$-framework. 
Finally, we reformulate the problem and give the main results of this paper.

\subsection{Initial-boundary value problem with boundary condition \eqref{3a}}

In this case, as previously analyzed, the smooth solution of IBVP \eqref{1}--\eqref{3a} is expected to converge to the solution of \eqref{6}--\eqref{7}.
The existence and decay properties of the solution to system \eqref{6}--\eqref{7} will be given in the Lemma \ref{l3.1}. 
Next, we are going to construct a group of correction functions.
Let's investigate the behavior of the solution to
	\eqref{1}--\eqref{3} at $x=+\infty$,
	then we can know how big the gaps are between the original solution and the diffusion wave.
	
	Taking the limits of \eqref{1} with respect to $x$,
	and noting that $m_x$, $\left(\frac{m^2}{\rho}\right)_x$, $p(\rho)_x$,
	$\mu\rho\phi_x$,
	$D\phi_{xx}$ will vanish at $x=+\infty$,
	then we have
	\begin{equation}\notag
		\left\{
		\begin{array}{lr}
			\displaystyle\frac{{\rm d}}{{\rm d}t}\rho(+\infty,t)=0,\\[2mm]
			\displaystyle\frac{{\rm d}}{{\rm d}t}m(+\infty,t)=-\alpha m(+\infty,t),\\[2mm]
			\displaystyle\frac{{\rm d}}{{\rm d}t}\phi(+\infty,t)=a\rho(+\infty,t)-b\phi(+\infty,t),\\[2mm]
			(\rho,m,\phi)(+\infty,0)=\left(\rho_0,m_0,\phi_0\right)(+\infty)=\left(\rho_{+},m_{+},\phi_{+}\right),
		\end{array}
		\right.
	\end{equation}
	By direct calculation, we get
	\begin{equation}\label{9}
		\lim_{x\to+\infty}\left(\rho,m,\phi\right)(x,t)=(\rho,m,\phi)(+\infty,t)=\left(\rho_{+},\;m_{+}{\rm e}^{-\alpha t},\;d_{+}{\rm e}^{-bt}+\frac{a}{b}\rho_{+}\right),
	\end{equation}
	where $d_+$ is defined as
	\begin{equation}\label{9a}
		d_{+}:=\phi_{+}-\frac{a}{b}\rho_{+}.
	\end{equation}
	From \eqref{7} and \eqref{9},  it holds that
	\begin{equation}\label{2.44444}
		\left\{
		\begin{array}{lr}
			|\rho(+\infty,t)-\bar\rho(+\infty,t)|=0	,\\[2mm]
			|m(+\infty,t)-\bar m(+\infty,t)|=|\;m_{+}{\rm e}^{-\alpha t}|\neq 0	,\\[2mm]
			|\phi(+\infty,t)-\bar\phi(+\infty,t)|=|d_{+}{\rm e}^{-bt}|\neq 0	.
		\end{array}
		\right.
	\end{equation}
	So, there are some gaps between the original solution and the corresponding asymptotic profile such that 
	\[
	m(x,t)-\bar m(x,t)\notin L^2(\mathbb{R}^+) \;\;\;{\rm and}	\;\;\;\phi(x,t)-\bar \phi(x,t)\notin L^2(\mathbb{R}^+).\]
	To eliminate these gaps, we need to introduce correction function $(\hat\rho,\hat m,\hat\phi)(x,t)$.
	By observing the structure of
	the original solutions at $x=+\infty$, we  technically construct the correction functions $(\hat\rho,\hat m,\hat\phi)(x,t)$ satisfying the following equations for $(x,t)\in\mathbb{R}^+\times\mathbb{R}^{+}$:
	\begin{equation}\label{11}
		\begin{cases}
\hat\rho_t+\hat m_x=0,     \\
			\hat m_t=-\alpha\hat m, \\
			\hat\phi_t=a\hat\rho-b\hat\phi,
\end{cases}
\quad \text{with} \quad
\begin{cases}
\hat\rho(x,t)\to0,      \\
			\hat m(x,t)\to m_{+}{\rm e}^{-\alpha t},  \\
			\hat\phi(x,t)\to d_{+}{\rm e}^{-bt},  & \mbox{as}\;\;\;x\to+\infty.\\
\end{cases}
	\end{equation}
Inspired by \cite{Nishihara1996, Hsiao-Liu1992}, let's define the initial data for \eqref{11} as follows:
\begin{equation}\label{12}
		\left\{
		\begin{array}{lr}
			\hat m(x,0)=m_+\displaystyle\int_{0}^{\varepsilon_{0}x}q_0(y){\rm d}y, \\[2mm]
			\hat\phi(x,0)=d_+\displaystyle\int_{0}^{\varepsilon_{0}x}q_0(y){\rm d}y,
		\end{array}
		\right.
	\end{equation}
	where $\varepsilon_{0}>0$ is a small constant to be determined later, and $q_0(x)$ satisfies
	\begin{equation} \label{13}
		q_0(x)\in C_{0}^{\infty}(\mathbb{R}^+),~~~q_0(x)\geq0,\;~~\int_{\mathbb{R}^{+}}q_0(x) {\rm d}x=1.
	\end{equation}	
	By solving \eqref{11}--\eqref{12}, we have
	\begin{equation}\label{hm}
		\hat m(x,t)=m_+{\rm e}^{-\alpha t}\displaystyle\int_{0}^{\varepsilon_{0}x}q_0(y){\rm d}y.
	\end{equation}
	Next, we turn to construct $\hat\rho(x,t)$. Integrating  \eqref{11}$_{1}$ over $(0,t)$ and using \eqref{hm} show
	\begin{align*}
		\int_{0}^{t}\hat\rho_{\tau}(x,\tau){\rm d}\tau&=-\int_{0}^{t}\hat m_x(x,\tau){\rm d}\tau\\
		&=-\int_{0}^{t}\varepsilon_{0}m_+q_0(\varepsilon_{0}x){\rm e}^{-\alpha\tau}{\rm d}\tau\\
		&=\frac{\varepsilon_{0}m_+}{\alpha}q_0(\varepsilon_{0}x){\rm e}^{-\alpha t}-\frac{\varepsilon_{0}m_+}{\alpha}q_0(\varepsilon_{0}x).
	\end{align*}
Thus $\hat\rho(x,t)$ can be constructed as
	\begin{equation}\notag
		\hat\rho(x,t)=\frac{\varepsilon_{0}m_+}{\alpha}q_0(\varepsilon_{0}x){\rm e}^{-\alpha t}.
	\end{equation}
With $\hat\rho(x,t)$, by \eqref{11}$_3$ and \eqref{12}$_2$, we obtain
	\begin{align}\notag
		\hat\phi(x,t)&=\hat\phi(x,0){\rm e}^{-bt}+\int_{0}^{t}{\rm e}^{-b(t-\tau)}a\hat\rho(x,\tau){\rm d}\tau\notag\\
		&=\hat\phi(x,0){\rm e}^{-bt}+\int_{0}^{t}\frac{\varepsilon_{0}am_+}{\alpha}q_0(\varepsilon_{0}x){\rm e}^{-bt}{\rm e}^{(b-\alpha)\tau}{\rm d}\tau.\notag
	\end{align}
	Obviously, we need to discuss whether $b$ and $\alpha$ are equal.
	If $b=\alpha$,
	\[
	\hat\phi(x,t)=\hat\phi(x,0){\rm e}^{-bt}+\frac{\varepsilon_{0}am_+}{\alpha}q_0(\varepsilon_{0}x)t{\rm e}^{-bt}.\]
	And if $b\neq\alpha$, 
	\[
	\hat\phi(x,t)=\hat\phi(x,0){\rm e}^{-bt}+\frac{\varepsilon_{0}am_+}{\alpha(b-\alpha)}q_0(\varepsilon_{0}x)\left({\rm e}^{-\alpha t}-{\rm e}^{-bt}\right).\]
	To sum up, we have
	\begin{equation}\notag
		\hat\phi(x,t)=\left\{\begin{array}{cl}
			d_+{\rm e}^{-bt}\displaystyle\int_{0}^{\varepsilon_{0}x}q(y){\rm d}y+\displaystyle\frac{\varepsilon_{0}am_+}{\alpha}q_0(\varepsilon_{0}x)t{\rm e}^{-bt},~~~~~~~~~~~~~~~~~ & \textrm{if}\ b=\alpha,\\[3mm]
			d_+{\rm e}^{-bt}\displaystyle\int_{0}^{\varepsilon_{0}x}q(y){\rm d}y+\displaystyle\frac{\varepsilon_{0}am_+}{\alpha(b-\alpha)}q_0(\varepsilon_{0}x)\left({\rm e}^{-\alpha t}-{\rm e}^{-bt}\right), & \textrm{if}\ b\neq\alpha.
		\end{array}\right.
	\end{equation}
	Thus, one can easily check that $(\hat\rho,\hat m,\hat\phi)(x,t)$ satisfies \eqref{11} and for $k, j \in \mathbb{N}$,
	\begin{equation}\label{13a}
		\partial^{k}_{x}\partial^{j}_{t}\hat\rho(0,t)=\partial^{k}_{x}\partial^{j}_{t}\hat m(0,t)=\partial^{k}_{x}\partial^{j}_{t}\hat\phi(0,t)=0.
	\end{equation}
	
	After constructing the correction function, the next step is to reformulate the IBVP \eqref{1}--\eqref{3a}. From \eqref{1}, \eqref{6} and \eqref{11}, we have
\begin{equation}\label{14}
		\left\{
		\begin{array}{lr}
			(\rho-\bar\rho-\hat\rho)_t+(m-\bar m-\hat m)_x=0, \\[2mm]
			(m-\bar m-\hat m)_t+\left(\frac{m^2}{\rho}\right)_x+(p(\rho)-p(\bar\rho))_x=\mu(\rho\phi_x-\bar\rho\bar{\phi}_x)-\alpha (m-\bar m-\hat m)-\bar{m}_t,\\[2mm]
			(\phi-\bar \phi-\hat \phi)_t=D\phi_{xx}+a(\rho-\bar\rho-\hat\rho)-b(\phi-\bar \phi-\hat \phi)-\bar{\phi}_t.
		\end{array}
		\right.
	\end{equation}
Integrating $\eqref{14}_{1}$ over $[0,+\infty) \times [0,t]$, and noticing that $\bar{\rho}(x,0)=\rho_{+}+\delta_{0}w_{0}(x)$ in Lemma \ref{l3.1}, we get
\begin{equation}\label{15}
\int_{0}^{+\infty}(\rho-\bar{\rho}-\hat{\rho})(y, t) {\rm d}y=\int_{0}^{+\infty}\left(\rho_{0}(x)-\rho_{+}\right) {\rm d} x-\delta_{0} \int_{0}^{+\infty} w_{0}(x) {\rm d} x-\frac{m_{+}}{\alpha}=0.
\end{equation}
Inspired by \cite{Hsiao-Liu1992}, let us define 
	\begin{equation}\label{15a}
		\left\{
		\begin{array}{lr}
			\varphi(x,t)=-\displaystyle\int^{+\infty}_{x}(\rho-\bar\rho-\hat\rho)(y,t){\rm d}y,\\[2mm]
			\psi(x,t)=(m-\bar m-\hat m)(x,t),\\[2mm]
			\zeta(x,t)=(\phi-\bar\phi-\hat\phi)(x,t),
		\end{array}
		\right.
	\end{equation}
then \eqref{14} can be transformed into
	\begin{equation}\label{18}
		\left\{
		\begin{array}{lr}
			\varphi_t+\psi=0,\\[2mm]
			\psi_t+\displaystyle\left(\frac{(\psi+\bar m+\hat m)^2}{\varphi_x+\bar\rho+\hat\rho}\right)_x+\left[p(\varphi_x+\bar\rho+\hat\rho)-p(\bar\rho)\right]_x\\[2mm]
			=\mu \varphi_x(\zeta_x+\bar\phi_x+\hat\phi_x)+\mu\bar\rho(\zeta_x+\hat\phi_x)+\mu\hat\rho(\zeta_x+\bar\phi_x+\hat\phi_x)-\alpha \psi-\bar m_t,\\[2mm]
			\zeta_t=D\zeta_{xx}+a\varphi_x-b\zeta-\bar\phi_t+D\bar\phi_{xx}+D\hat\phi_{xx},\\[2mm]
			\varphi(0,t)=\zeta_x(0,t)=0,\\[2mm]
			(\varphi,\psi,\zeta)(x,t)|_{t=0}=(\varphi_0,\psi_0,\zeta_0)(x)\to0,\;\;\;\mbox{as}\;\;\; x\to+\infty,
		\end{array}
		\right.
	\end{equation}
	or
	\begin{equation}\label{20}
		\left\{
		\begin{array}{lr}
			\varphi_{tt}-\left(p^{\prime}(\bar\rho)\varphi_x\right)_x	+\alpha \varphi_t\\[2mm]
			=-\mu \varphi_x\zeta_x-\mu \varphi_x\bar\phi_x-\mu \varphi_x\hat\phi_x-\mu\bar\rho\zeta_x-\mu\bar\rho\hat\phi_x-\mu\hat\rho\zeta_x-\mu\hat\rho\bar\phi_x-\mu\hat\rho\hat\phi_x-h_x-f_x,\\[2mm]
			\zeta_t=D\zeta_{xx}+a\varphi_x-b\zeta+g,\\[2mm]
			\varphi(0,t)=\zeta_x(0,t)=0,\\[2mm]
			\left(\varphi,\varphi_t,\zeta\right)(x,t)|_{t=0}=(\varphi_0,-\psi_0,\zeta_0)(x)\to0,\;\;\;\mbox{as}\;\;\; x\to+\infty,
		\end{array}
		\right.
	\end{equation}
	where
	\begin{equation}\label{22}
		\left\{
		\begin{array}{lr}
		\varphi_0(x):= -\displaystyle\int^{+\infty}_{x} \left[\rho_{0}(y)-\bar{\rho}_0(y)-\hat{\rho}(y,0) \right] {\rm d}y,\\[3mm]
\psi_0(x):=m_0(x)-\bar{m}(x,0)-\hat{m}(x,0),\\[3mm]
\zeta_0(x):=\phi_0(x)-\bar{\phi}(x,0)-\hat{\phi}(x,0),\\[2mm]
			h=-\displaystyle\frac{\left(\varphi_t+\frac{1}{\alpha}q(\bar\rho)_x-\hat m\right)^2}{\varphi_x+\bar\rho+\hat\rho},\\[2mm]
			f=\frac{1}{\alpha}q\left(\bar\rho\right)_t-\left[p\left(\varphi_x+\bar\rho+\hat\rho\right)-p\left(\bar\rho\right)-p^{\prime}\left(\bar\rho\right)\varphi_x\right],\\[2mm]
			g=-\bar\phi_t+D\bar\phi_{xx}+D\hat\phi_{xx}.
		\end{array}
		\right.
	\end{equation}
	Before stating our results precisely, we assume that 
	\begin{equation}\label{23}
		\left\{
		\begin{array}{lr}
			p(\rho)\in C^5(\mathbb{R}^{+}),\\[2mm]
			p'(\rho) - \frac{a\mu}{b}\rho > 0, \quad \text{for any} \quad \rho > 0.
		\end{array}
		\right.
	\end{equation}
	Now we state the convergence results as follows.
	\begin{theorem}\label{thm1}
	Suppose that \eqref{23} holds, $\left|\int_{0}^{\infty} (v_0(x) - v_+){\rm d}x - \frac{m_+}{\alpha}\right|$ and $\|\varphi_{0}\|_{3}+\|\psi_{0}\|_{2}+\|\zeta_{0}\|_{4}$ are sufficiency small.
		 Then, there exists a unique time-global solution $(\varphi,\psi,\zeta)(x,t)$ of \eqref{20}--\eqref{22}, which satisfies
		\begin{equation}\label{24}
			\left\{
			\begin{array}{lr}
				\|\partial_x^k\varphi(t)\|\leq C (1+t)^{-\frac{1}{2}(k+1)},\;\;\;k=0,1,2,\\[2mm]
				\|\partial_x^k\psi(t)\|\leq C (1+t)^{-\frac{1}{2}(k+2)},\;\;\;k=0,1,2,\\[2mm]
				\|\partial_x^k\zeta(t)\|\leq C (1+t)^{-\frac{1}{2}(k+1)},\;\;\;k=0,1.
			\end{array}
			\right.
		\end{equation}
	\end{theorem}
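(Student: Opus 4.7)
The plan is to prove Theorem \ref{thm1} by the standard scheme of local-in-time existence plus time-weighted a priori estimates and continuation. A local solution $(\varphi,\psi,\zeta)\in C([0,T];H^3\times H^2\times H^4)$ of \eqref{20}--\eqref{22} with the boundary data $\varphi(0,t)=\zeta_x(0,t)=0$ can be constructed by linearization and contraction, which is routine for a damped-hyperbolic/parabolic coupling. For the global extension and the rates \eqref{24}, I would work with the time-weighted a priori norm
\[
N(T)^2:=\sup_{0\le t\le T}\Bigl(\sum_{k=0}^{2}(1+t)^{k+1}\|\partial_x^k\varphi(t)\|^2+\sum_{k=0}^{2}(1+t)^{k+2}\|\partial_x^k\psi(t)\|^2+\sum_{k=0}^{1}(1+t)^{k+1}\|\partial_x^k\zeta(t)\|^2\Bigr),
\]
assume $N(T)\le\varepsilon\ll 1$, and aim to close an estimate of the form $N(T)\le C_0+C\varepsilon N(T)$, from which \eqref{24} follows for sufficiently small initial data.

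The heart of the argument is the basic energy identity for \eqref{20}. Multiplying the $\varphi$-equation by $\varphi_t$ and integrating on $\mathbb{R}^+$, using $\varphi(0,t)=0$ and hence $\varphi_t(0,t)=0$ to discard the boundary contribution from $(p'(\bar\rho)\varphi_x)_x$, yields
\[
\frac{d}{dt}\!\int_0^\infty\!\Bigl(\tfrac12\varphi_t^2+\tfrac12 p'(\bar\rho)\varphi_x^2\Bigr)dx+\alpha\|\varphi_t\|^2=-\!\int_0^\infty\!\mu\bar\rho\,\zeta_x\varphi_t\,dx+\mathcal{R},
\]
where $\mathcal{R}$ collects $-\mu\varphi_x\bar\phi_x$, the $\hat\rho,\hat\phi$-terms, and $h_x,f_x$. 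The chemotactic coupling $\int\mu\bar\rho\zeta_x\varphi_t\,dx$ is the essential difficulty: it is of the same differential order as the pressure and cannot be treated perturbatively. Mirroring the mechanism that produces the effective pressure $q(\bar\rho)=p(\bar\rho)-\frac{a\mu}{2b}\bar\rho^2$ in \eqref{6}, I would multiply the $\zeta$-equation by a suitable combination (such as $\frac{\mu\bar\rho}{a}\zeta$ together with $\frac{\mu\bar\rho}{ab}\zeta_t$) and add: the coupling combined with the source $a\varphi_x$ replaces the effective hyperbolic coefficient $p'(\bar\rho)$ by $p'(\bar\rho)-\frac{a\mu}{b}\bar\rho$, which is positive by hypothesis \eqref{23}. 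The remaining terms in $\mathcal{R}$ are then controlled by Cauchy--Schwarz using the decay of $\bar\rho_x,\bar\phi_x$ from Lemma \ref{l3.1}, the exponential-in-$t$ decay of $(\hat\rho,\hat m,\hat\phi)$, and the a priori smallness $N(T)\le\varepsilon$.

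Higher-order estimates follow by differentiating \eqref{20} in $x$ up to the required order and repeating the procedure; the vanishing \eqref{13a} of the correction functions together with $\varphi(0,t)=\zeta_x(0,t)=0$ disposes of all obstructing boundary terms. The decay rates \eqref{24} are extracted by introducing the weight $(1+t)^{k+1}$ in the $k$-th order identity, integrating in $t$, and inducting on $k$ using the lower-order decay bounds as hypothesis; the extra $(1+t)^{-1/2}$ in the rate of $\psi$ relative to $\varphi$ at each derivative level is consistent with the relation $\psi=-\varphi_t$ from the first equation in \eqref{18}. The main obstacle is the chemotactic coupling just described, for which hypothesis \eqref{23} is indispensable; a secondary bookkeeping difficulty is that the inhomogeneities $f_x,g$ carry only polynomial time decay inherited from derivatives of $\bar\rho$, and this ceiling constrains the admissible weight exponent and determines the precise powers in \eqref{24}, while the quadratic correction $h_x$ requires repeated use of $N(T)\le\varepsilon$ to absorb expressions like $\varphi_t\varphi_{tx}/(\varphi_x+\bar\rho+\hat\rho)$ into the dissipation.
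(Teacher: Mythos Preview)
Your scheme is the paper's: local existence plus continuation via time-weighted a priori estimates, with the chemotactic coupling handled by testing the $\zeta$-equation against $\bar\rho$-weighted combinations of $\zeta$ and $\zeta_t$ and invoking \eqref{23} (which the paper records as positive definiteness of the $2\times2$ matrix in Remark~\ref{remark32}, equivalent to your condition $p'(\bar\rho)-\frac{a\mu}{b}\bar\rho>0$).

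Two adjustments are needed. First, testing the $\varphi$-equation against $\varphi_t$ alone yields dissipation of $\|\varphi_t\|^2$ but not of $\|\varphi_x\|^2$; when you then multiply by $(1+t)$ and integrate, the term $\int_0^t\|\varphi_x(\tau)\|^2\,d\tau$ arising from differentiating the weight has no bound, and the bootstrap does not close. The paper therefore also tests \eqref{29}$_1$ against $\varphi$, paired with \eqref{29}$_2$ against $\frac{\mu}{a}\bar\rho\zeta$: this combination produces the quadratic form $\int\bigl(p'(\bar\rho)\varphi_x^2-2\mu\bar\rho\zeta\varphi_x+\frac{b\mu}{a}\bar\rho\zeta^2\bigr)dx$ as dissipation, positive by \eqref{23}, and hence $\int_0^t\|(\varphi_x,\zeta,\zeta_x)\|^2\,d\tau<\infty$, which then feeds every subsequent weighted step. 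Second, the $\zeta_t$-multiplier should be $\frac{\mu}{a}\bar\rho\zeta_t$ rather than $\frac{\mu}{ab}\bar\rho\zeta_t$: the coefficient is fixed by the requirement that the source term $a\varphi_x\cdot\frac{\mu}{a}\bar\rho\zeta_t=\mu\bar\rho\varphi_x\zeta_t$ cancel exactly with the coupling contribution appearing in the $\varphi_t$-identity; with your extra $1/b$ the cancellation fails and a top-order cross term survives.
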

	\begin{remark}\label{remark32}
	It is worth pointing out that condition \eqref{23}$_2$ is indispensable. More precisely, if \eqref{23}$_2$ holds, then we can deduce that the following matrixe $P$ is positive definite:
		\[	
		P:=\left(\begin{array}{lr}
			p^{\prime}(\bar\rho) &-\mu\bar\rho \\
			-\mu\bar\rho & \frac{b\mu}{a}\bar\rho
		\end{array}\right),
		\]
		which implies for any $(x,y)$, we have
		\begin{equation}\label{positivedefine1}
			c\left(x^2+y^2\right)\leq p^{\prime}(\bar\rho)x^2-2\mu\bar\rho xy+\frac{b\mu}{a}\bar\rho y^2\leq C\left(x^2+y^2\right).
		\end{equation}
		With \eqref{positivedefine1}, we can deal with some troublesome terms. See \eqref{13.43}, \eqref{13.62} for more details.
	\end{remark}
	\begin{remark}\label{R1}
As one can see from \eqref{12}, we introduce a small positive constant $\varepsilon_0$. When we assume that $\varepsilon_0$ is small enough instead of $|u_+|$, we can still get the expected estimates about $(\hat{v},\hat{u})$ (see \eqref{3.25}). So we do not need to assume $\|v_{0}-v_{+}\|_{L^{1}}+|u_{+}|\ll 1$. This method of constructing the correction function can be regarded as a generalization of that used in \cite{Hsiao-Liu1992, Nishihara-Yang1999}.
\end{remark}
	Noting that $\rho-\bar{\rho}=\varphi_{x}+\hat{\rho},~m-\bar{m}=z+\hat{m},~{\rm and}~\phi-\bar{\phi}=\zeta+\hat{\phi}$, and using \eqref{24} and \eqref{3.25}, we can immediately obtain the following corollary.
	\begin{corollary}[{\rm Converge to diffusion waves}]
		Suppose that all the conditions in
		Theorem \ref{thm1} hold, the system \eqref{1}--\eqref{3a} possesses a unique global solution $(\rho,m,\phi)(x,t)$ satisfying
		\begin{equation}\notag
			\left\{
			\begin{array}{lr}
				\|(\rho-\bar\rho)(t)\|_{L^{\infty}}\leq C (1+t)^{-\frac{3}{4}},\\[2mm]
				\|\left(m-\bar m\right)(t)\|_{L^{\infty}}\leq C (1+t)^{-\frac{5}{4}},\\[2mm]
				\|\left(\phi-\bar\phi\right)(t)\|_{L^{\infty}}\leq C (1+t)^{-\frac{3}{4}}.	
			\end{array}
			\right.
		\end{equation}
	\end{corollary}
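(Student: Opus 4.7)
The corollary is an immediate consequence of the $H^3\times H^2\times H^4$ decay estimates \eqref{24} from Theorem \ref{thm1}, together with the explicit form of the correction function $(\hat\rho,\hat m,\hat\phi)(x,t)$, connected by the algebraic identities
\begin{equation}\notag
\rho-\bar\rho=\varphi_x+\hat\rho,\qquad m-\bar m=\psi+\hat m,\qquad \phi-\bar\phi=\zeta+\hat\phi.
\end{equation}
So the plan is simply to pass from the weighted $L^2$ bounds of $(\varphi,\psi,\zeta)$ to $L^\infty$ bounds by a one-dimensional Sobolev embedding, and then to absorb the correction-function pieces using their pointwise (in fact exponential) decay.

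First, I would recall the half-line Sobolev inequality: for any $f\in H^1(\mathbb R^+)$ with $f(x)\to 0$ as $x\to+\infty$, $\|f\|_{L^\infty}^2\leq 2\|f\|\,\|f_x\|$, obtained from $f(x)^2=-2\int_x^{+\infty}ff_y\,{\rm d}y$ and Cauchy--Schwarz. The functions $\varphi_x$, $\psi$ and $\zeta$ all lie in the appropriate Sobolev spaces by \eqref{24} and vanish at infinity by construction \eqref{15a} together with the boundary/decay data built into \eqref{18}. Inserting the rates from \eqref{24} gives
\begin{align*}
\|\varphi_x\|_{L^\infty}^2&\leq 2\|\varphi_x\|\,\|\varphi_{xx}\|\leq C(1+t)^{-1}(1+t)^{-3/2}=C(1+t)^{-5/2},\\
\|\psi\|_{L^\infty}^2&\leq 2\|\psi\|\,\|\psi_x\|\leq C(1+t)^{-1}(1+t)^{-3/2}=C(1+t)^{-5/2},\\
\|\zeta\|_{L^\infty}^2&\leq 2\|\zeta\|\,\|\zeta_x\|\leq C(1+t)^{-1/2}(1+t)^{-1}=C(1+t)^{-3/2},
\end{align*}
so $\|\varphi_x\|_{L^\infty}\leq C(1+t)^{-5/4}\leq C(1+t)^{-3/4}$, $\|\psi\|_{L^\infty}\leq C(1+t)^{-5/4}$ and $\|\zeta\|_{L^\infty}\leq C(1+t)^{-3/4}$.

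Second, I would handle the correction terms using the explicit formulas derived in Section \ref{sec2}. Since $\hat\rho(x,t)=\frac{\varepsilon_0 m_+}{\alpha}q_0(\varepsilon_0 x){\rm e}^{-\alpha t}$, $\hat m(x,t)=m_+{\rm e}^{-\alpha t}\int_0^{\varepsilon_0 x}q_0(y)\,{\rm d}y$ and $\hat\phi(x,t)$ decays either as ${\rm e}^{-bt}$, $t\,{\rm e}^{-bt}$ or ${\rm e}^{-\alpha t}+{\rm e}^{-bt}$ according to \eqref{3.25}, we get
\begin{equation}\notag
\|\hat\rho(t)\|_{L^\infty}+\|\hat m(t)\|_{L^\infty}+\|\hat\phi(t)\|_{L^\infty}\leq C(1+t){\rm e}^{-\min(\alpha,b)t},
\end{equation}
which is faster than any polynomial rate and can therefore be absorbed into each of the three bounds above.

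There is no real obstacle here; the only small point to keep track of is that the one-dimensional Sobolev inequality requires decay at $+\infty$, which is guaranteed by the far-field conditions built into the definitions \eqref{15a} of $(\varphi,\psi,\zeta)$ and the matching condition \eqref{15} that forces $\varphi(\cdot,t)\in L^2(\mathbb R^+)$. Combining the two estimates above through the triangle inequality then yields exactly the three inequalities stated in the corollary.
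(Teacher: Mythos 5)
Your argument is correct and is essentially the paper's own (one-line) proof: decompose via $\rho-\bar\rho=\varphi_x+\hat\rho$, $m-\bar m=\psi+\hat m$, $\phi-\bar\phi=\zeta+\hat\phi$, pass from the $L^2$ rates \eqref{24} to $L^\infty$ by the half-line Sobolev inequality, and absorb the exponentially decaying correctors via \eqref{3.25}. Your computation even yields the stronger rate $(1+t)^{-5/4}$ for $\rho-\bar\rho$ under \eqref{24}, which of course implies the stated $(1+t)^{-3/4}$, so nothing is missing.
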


\subsection{Initial-boundary value problem with boundary condition \eqref{3b}}

In this case, notice that 
\begin{equation}\notag
	\frac{\rm d}{{\rm d} t}\rho(0,t)=-m(0,t)=0, ~~\mbox{and then}~~\rho(0,t)=\rho_0(0).
\end{equation}
So we study it in two situations.

\subsubsection{The case of $\rho_0(0)\neq \rho_{+}$}

From \cite{Duyn-Peletier1977,Nishihara-Yang1999}, we can confirm that for any positive constant $\rho_{0}(0)$ and $\rho_{+}$, there exists a unique solution $\bar{\rho}(x, t)$ in the form of $\bar{\rho}(\frac{x}{1+t})|_{x \geq 0}$ satisfying
\begin{equation}\label{1.2}
\left\{\begin{array}{l}
\bar\rho_t-\frac{1}{\alpha}q(\bar\rho)_{xx}=0,\\[2mm]
\bar{\rho}|_{x=0}=\rho_{0}(0),\\[2mm]
 \bar{\rho}|_{x=+\infty}=\rho_{+},\quad (x,t)\in \mathbb{R}^{+}\times\mathbb{R}^{+},.
\end{array}
        \right.	
\end{equation}
 Moreover, $(\bar{m},\bar\phi)(x,t)$ is defined by
\begin{equation}\label{1.2a}
\bar{m}(x,t)=-\frac{1}{\alpha}q(\bar{\rho})_{x}, \ \ \bar\phi=\frac{a}{b}\bar\rho.	
\end{equation}
then
\begin{equation}\notag
\bar{m}_{x}|_{x=0}=\bar{\rho}_{t}|_{x=0}=\left.\bar{\rho}^{\prime}\left(\frac{x}{\sqrt{t+1}}\right)\left(-\frac{x}{2 \sqrt{t+1}(t+1)}\right)\right|_{x=0}=0.	
\end{equation}
Thus,  $(\bar{v},\bar{u})(x, t)$ called the nonlinear diffusion wave which satisfies \eqref{6} and \eqref{8}.

Next, similar to the previous idea, we still need to construct a set of correction functions. Inspired by \cite{Nishihara-Yang1999}, we define the correction functions as
\begin{equation}\label{3.8}
 \left\{ \begin{array}{l}
 \hat{\rho}(x,t)=\displaystyle\frac{\varepsilon_{0}(m_{0}(0)-m_{+})q_0(\varepsilon_{0}x)}{-\alpha}{\rm e}^{-\alpha t},\\[3pt]
\hat{m}(x,t) = {\rm e}^{-\alpha t} \left(m_+ + (m_{0}(0)-m_{+}) \displaystyle\int^{+\infty}_{\varepsilon_0 x} q_0(y){\rm d}y\right),
\end{array} \right.
\end{equation}
and 
\begin{equation}\label{3.8a}
		\hat\phi(x,t)=\left\{\begin{array}{cl}
			d_+{\rm e}^{-bt}\displaystyle\int_{0}^{\varepsilon_0 x} q_0(y){\rm d}y-\displaystyle\frac{\varepsilon_{0}a(m_{0}(0)-m_{+})}{\alpha}q_0(\varepsilon_{0}x)t{\rm e}^{-bt},~~~~~~~~~~~~ & \textrm{if}\ b=\alpha,\\[3mm]
			d_+{\rm e}^{-bt}\displaystyle\int_{0}^{\varepsilon_0 x} q_0(y){\rm d}y-\displaystyle\frac{\varepsilon_{0}a(m_{0}(0)-m_{+})}{\alpha(b-\alpha)}q_0(\varepsilon_{0}x)\left({\rm e}^{-\alpha t}-{\rm e}^{-bt}\right), & \textrm{if}\ b\neq\alpha,
		\end{array}\right.
	\end{equation}
where $\varepsilon_{0}>0$ is a small constant to be specified later, and $q_{0}(x)$ satisfies \eqref{13}.
 Therefore, we have
\begin{equation}\label{3.9}
 \left\{ \begin{array}{l}
  \hat\rho_t+\hat m_x=0,     \\[2mm]
			\hat m_t=-\alpha\hat m, \\[2mm]
			\hat\phi_t=a\hat\rho-b\hat\phi, \\[2mm]
  (\partial^{k}_{x}\partial^{j}_{t}\hat{\rho},\hat{m}, \partial^{k}_{x}\partial^{j}_{t}\hat{m}_x, \partial^{k}_{x}\partial^{j}_{t}\hat\phi)|_{x=0}=(0, m_{0}(0){\rm e}^{-\alpha t} ,0,0), \\[2mm]
   (\hat{\rho},\hat{m},\hat\phi)|_{x=+\infty}=(0, m_{+}{\rm e}^{-\alpha t} ,d_+{\rm e}^{-bt}).
  \end{array} \right.
\end{equation}
Let us define the
	perturbation function $(\varphi,\psi,\zeta)(x,t)$ as
	\begin{equation}\notag
		\left\{
		\begin{array}{lr}
			\varphi(x,t)=-\displaystyle\int^{+\infty}_{x}(\rho-\bar\rho-\hat\rho)(y,t){\rm d}y,\\[2mm]
			\psi(x,t)=(m-\bar m-\hat m)(x,t),\\[2mm]
			\zeta(x,t)=(\phi-\bar\phi-\hat\phi)(x,t).
		\end{array}
		\right.
	\end{equation}
	Then we can deduce 
	\begin{equation}\label{3.10}
		\left\{
		\begin{array}{lr}
			\varphi_t+\psi=0,\\[2mm]
			\psi_t+\displaystyle\left(\frac{(\psi+\bar m+\hat m)^2}{\varphi_x+\bar\rho+\hat\rho}\right)_x+\left[p(\varphi_x+\bar\rho+\hat\rho)-p(\bar\rho)\right]_x\\[2mm]
			=\mu \varphi_x(\zeta_x+\bar\phi_x+\hat\phi_x)+\mu\bar\rho(\zeta_x+\hat\phi_x)+\mu\hat\rho(\zeta_x+\bar\phi_x+\hat\phi_x)-\alpha \psi-\bar m_t,\\[2mm]
			\zeta_t=D\zeta_{xx}+a\varphi_x-b\zeta-\bar\phi_t+D\bar\phi_{xx}+D\hat\phi_{xx},\\[2mm]
			\varphi_x(0,t)=\zeta(0,t)=0,\\[2mm]
			(\varphi,\psi,\zeta)(x,t)|_{t=0}=(\varphi_0,\psi_0,\zeta_0)(x)\to0,\;\;\;\mbox{as}\;\;\; x\to+\infty.
		\end{array}
		\right.
	\end{equation}
	or
	\begin{equation}\label{3.11}
		\left\{
		\begin{array}{lr}
			\varphi_{tt}-\left(p^{\prime}(\bar\rho)\varphi_x\right)_x	+\alpha \varphi_t\\[2mm]
			=-\mu \varphi_x\zeta_x-\mu \varphi_x\bar\phi_x-\mu \varphi_x\hat\phi_x-\mu\bar\rho\zeta_x-\mu\bar\rho\hat\phi_x-\mu\hat\rho\zeta_x-\mu\hat\rho\bar\phi_x-\mu\hat\rho\hat\phi_x-h_x-f_x,\\[2mm]
			\zeta_t=D\zeta_{xx}+a\varphi_x-b\zeta+g,\\[2mm]
			\varphi_x(0,t)=\zeta(0,t)=0,\\[2mm]
			\left(\varphi,\varphi_t,\zeta\right)(x,t)|_{t=0}=(\varphi_0,-\psi_0,\zeta_0)(x)\to0,\;\;\;\mbox{as}\;\;\; x\to+\infty,
		\end{array}
		\right.
	\end{equation}
	where
	\begin{equation}\label{3.12}
		\left\{
		\begin{array}{lr}
		\varphi_0(x):= -\displaystyle\int^{+\infty}_{x} \left[\rho_{0}(y)-\bar{\rho}_0(y)-\hat{\rho}(y,0) \right] {\rm d}y,\\[3mm]
\psi_0(x):=m_0(x)-\bar{m}(x,0)-\hat{m}(x,0),\\[3mm]
\zeta_0(x):=\phi_0(x)-\bar{\phi}(x,0)-\hat{\phi}(x,0),\\[2mm]
			h=-\displaystyle\frac{\left(\varphi_t+\frac{1}{\alpha}q(\bar\rho)_x-\hat m\right)^2}{\varphi_x+\bar\rho+\hat\rho},\\[2mm]
			f=\frac{1}{\alpha}q\left(\bar\rho\right)_t-\left[p\left(\varphi_x+\bar\rho+\hat\rho\right)-p\left(\bar\rho\right)-p^{\prime}\left(\bar\rho\right)\varphi_x\right],\\[2mm]
			g=-\bar\phi_t+D\bar\phi_{xx}+D\hat\phi_{xx}.
		\end{array}
		\right.
	\end{equation}

	\begin{theorem}\label{thm2}
		Suppose that \eqref{23} holds, $\left|\rho_0(0) - \rho_+\right|$ and $\|\varphi_{0}\|_{3}+\|\psi_{0}\|_{2}+\|\zeta_{0}\|_{4}$ are sufficiency small.
		then the global solution $(\varphi,\psi,\zeta)(x,t)$ of \eqref{3.10} uniquely exists and satisfies the following decay rates:
		\begin{equation}\label{3.14}
			\left\{
			\begin{array}{lr}
				\|\partial_x^k\varphi(t)\|\leq C (1+t)^{-\frac{1}{2}(k+1)},\;\;\;k=0,1,2,\\[2mm]
				\|\partial_x^k\psi(t)\|\leq C (1+t)^{-\frac{1}{2}(k+2)},\;\;\;k=0,1,2,\\[2mm]
				\|\partial_x^k\zeta(t)\|\leq C (1+t)^{-\frac{1}{2}(k+1)},\;\;\;k=0,1.
			\end{array}
			\right.
		\end{equation}
	\end{theorem}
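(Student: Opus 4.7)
The plan is to obtain the global-in-time solution by the classical continuation argument: combine a standard local existence result (by linearisation and contraction in the $H^3\times H^2\times H^4$ framework suggested by the smallness hypothesis) with uniform a priori estimates on any existence interval $[0,T]$. To this end I would close on the a priori assumption
\begin{equation}
\notag
N(T)^2 := \sup_{0\leq t \leq T}\Bigl\{\|\varphi(t)\|_3^2 + \|\varphi_t(t)\|_2^2 + \|\zeta(t)\|_4^2\Bigr\} \leq \epsilon^2,
\end{equation}
with $\epsilon$ small, and use the explicit decay of the self-similar profile $\bar\rho$ from \eqref{1.2}--\eqref{1.2a} together with the explicit formulae \eqref{3.8}--\eqref{3.8a} for $(\hat\rho,\hat m,\hat\phi)$ to estimate all background terms. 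A key preliminary step is to record the sharp self-similar bounds $\|\partial_x^k(\bar\rho-\rho_+)(t)\|_{L^p} \lesssim |\rho_0(0)-\rho_+|\,(1+t)^{-\frac{k}{2}+\frac{1}{2p}-\frac{1}{4}}$ and the exponential-in-$t$, compactly-supported-in-$x$ bounds on the correction, which together make every inhomogeneous source on the right-hand side of \eqref{3.11} integrable in time against the energy.

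The a priori estimates themselves would be built up in the usual stepwise manner. At the base level, I multiply the first equation of \eqref{3.11} by $\varphi$ (to produce $\|\varphi_x\|^2$), by $\varphi_t$ (to produce $\|\varphi_t\|^2$), and the second equation by $\zeta$ (to produce $\|\zeta\|^2 + \|\zeta_x\|^2$), then take an appropriate linear combination with a positive weight on the $\zeta$-piece so that the coupling terms $-\mu\bar\rho\zeta_x$ and $a\varphi_x$ combine into precisely the quadratic form $p'(\bar\rho)\varphi_x^2 - 2\mu\bar\rho\varphi_x\zeta + \frac{b\mu}{a}\bar\rho\zeta^2$ controlled from below via the positive-definite matrix $P$ of Remark \ref{remark32}, i.e.\ by \eqref{positivedefine1}. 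Higher-order bounds on $\|\varphi_{xx}\|, \|\varphi_{xxx}\|, \|\varphi_{xt}\|, \|\varphi_{tt}\|$ and on $\|\partial_x^k\zeta\|$ for $k=2,3,4$ follow by differentiating \eqref{3.11} in $x$ (and in $t$ for mixed norms) and repeating the same energy construction; Sobolev embedding $H^1 \hookrightarrow L^\infty$ and $N(T)\ll 1$ are then used to absorb the nonlinear remainders $h_x$, $f_x$ and the cross terms $\mu\varphi_x\zeta_x, \mu\varphi_x\bar\phi_x, \mu\hat\rho\bar\phi_x$ etc.\ into the dissipative left-hand side.

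To upgrade these bounds to the algebraic decay rates \eqref{3.14}, I would multiply each of the identities obtained above by $(1+t)^k$ for appropriate $k$ and induct: the weight $(1+t)^k$ creates a $k(1+t)^{k-1}$ energy term which is dominated either by the damping on $\|\varphi_t\|^2, \|\zeta\|^2$ or by the dissipation on $\|\varphi_x\|^2, \|\zeta_x\|^2$ after using the previously-established decay for the lower-order quantity. This is the standard Matsumura--Nishihara--Yang-type bootstrap and produces exactly the $(1+t)^{-(k+1)/2}$ rates by $k+1$ iterations for each spatial-derivative level.

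The principal obstacle relative to the flat case of Theorem \ref{thm1} is twofold. First, since $\bar\rho$ is a genuine self-similar diffusion wave (not a perturbation of a constant) the quantities $\bar\rho_t, \bar\rho_x, \bar m, \bar\phi_t, \bar\phi_{xx}$ are large near $x=0$ and only algebraically small; consequently the forcing $-\bar m_t$ in the $\varphi$-equation and $g = -\bar\phi_t + D\bar\phi_{xx} + D\hat\phi_{xx}$ in the $\zeta$-equation must be estimated with the sharp self-similar bounds above and integrated carefully in time, and this is what pins down the exponents in \eqref{3.14}. Second, the new boundary conditions $\varphi_x(0,t)=0$ and $\zeta(0,t)=0$ reverse the roles of Dirichlet and Neumann relative to \eqref{18}: integrating the $\varphi$-equation against $\varphi$ now produces harmless boundary traces $p'(\bar\rho)\varphi_x\varphi|_{x=0}=0$ and $\bar\rho(0,t)\zeta\varphi|_{x=0}$ with $\zeta(0,t)=0$, but the multiplier $\zeta_{xx}$ used in the higher-order parabolic estimate leaves a boundary remainder $\zeta_t\zeta_x|_{x=0}$ that must be controlled using $\zeta_t(0,t)=0$ (obtained by differentiating $\zeta(0,t)=0$), together with the compatibility identity $\varphi_x(0,t)=0$; an analogous care is required when multiplying by $\varphi_{xxx}$. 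Once these boundary residues are shown to vanish or to be estimable via the background profile's boundary trace, the remainder of the proof is a straightforward adaptation of the scheme used for Theorem \ref{thm1}.
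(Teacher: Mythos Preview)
Your proposal is correct and follows essentially the same route as the paper: the paper reduces Theorem~\ref{thm2} to an analogue of Proposition~\ref{p3.1} (stated as Proposition~\ref{p3.2}) by invoking the self-similar decay of Lemma~\ref{l4.1} and the exponential bounds of Lemma~\ref{l4.2} in place of Lemmas~\ref{l3.1}--\ref{l3.2}, and then repeats the weighted energy scheme of Section~\ref{s3.2} with the boundary data $\varphi_x(0,t)=\zeta(0,t)=0$ handled exactly as you describe. One small correction: the self-similar bound is $\|\partial_t^j\partial_x^k\bar\rho(t)\|_{L^p}\lesssim |\rho_0(0)-\rho_+|(1+t)^{-k/2-j+1/(2p)}$ for $j+k\geq 1$ (no extra $-\tfrac14$), as in Lemma~\ref{l4.1}.
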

	With Theorem \ref{thm2} in hand, we can deduce the following corollary.	
\begin{corollary}[{\rm Converge to diffusion waves}]
		Suppose that all the conditions in
		Theorem \ref{thm2} hold, the system \eqref{1}--\eqref{3} and \eqref{3b}  possesses a unique global solution $(\rho,m,\phi)(x,t)$ satisfying
		\begin{equation}\notag
			\left\{
			\begin{array}{lr}
				\|(\rho-\bar\rho)(t)\|_{L^{\infty}}\leq C (1+t)^{-\frac{3}{4}},\\[2mm]
				\|\left(m-\bar m\right)(t)\|_{L^{\infty}}\leq C (1+t)^{-\frac{5}{4}},\\[2mm]
				\|\left(\phi-\bar\phi\right)(t)\|_{L^{\infty}}\leq C (1+t)^{-\frac{3}{4}}.	
			\end{array}
			\right.
		\end{equation}
	\end{corollary}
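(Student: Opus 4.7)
The plan is to deduce the corollary as a direct consequence of Theorem \ref{thm2} together with the explicit formulas \eqref{3.8}--\eqref{3.8a} for the correction functions. Global existence and uniqueness of $(\rho,m,\phi)(x,t)$ for the IBVP \eqref{1}--\eqref{3},\eqref{3b} follows immediately from Theorem \ref{thm2} via the invertible substitution of \eqref{15a} applied in this setting, namely $\rho-\bar\rho = \varphi_x + \hat\rho$, $m-\bar m = \psi + \hat m$, $\phi-\bar\phi = \zeta + \hat\phi$, since $(\bar\rho,\bar m,\bar\phi)$ and $(\hat\rho,\hat m,\hat\phi)$ are explicitly known objects. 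Thus the corollary reduces to three $L^\infty$ decay estimates.

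First I would control the correction pieces using their closed-form expressions. From \eqref{3.8}--\eqref{3.8a}, each of $\hat\rho(x,t)$, $(\hat m - m_+ e^{-\alpha t})(x,t)$, and $\hat\phi(x,t)$ is a finite sum of products of the compactly supported function $q_0(\varepsilon_0 x)$ (or $\int_0^{\varepsilon_0 x} q_0$, bounded by $1$) with an exponential factor of the form $e^{-\alpha t}$, $e^{-bt}$, or $t\,e^{-bt}$ in the degenerate case $b=\alpha$. Therefore each of $\|\hat\rho\|_{L^\infty}$, $\|\hat m\|_{L^\infty}$, $\|\hat\phi\|_{L^\infty}$ decays like $C(1+t)e^{-\min(\alpha,b)t}$, which is $O\!\bigl((1+t)^{-N}\bigr)$ for any $N$ and hence is dominated by all three target polynomial rates.

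Next I would apply the standard one-dimensional Sobolev/Gagliardo--Nirenberg inequality on the half-line,
\begin{equation*}
\|f\|_{L^{\infty}(\mathbb{R}^+)}^{2} \leq 2\,\|f\|\,\|f_x\|,
\end{equation*}
to $f=\varphi_x$, $f=\psi$, and $f=\zeta$, and then insert the $L^{2}$ decay bounds \eqref{3.14}. This gives
\begin{align*}
\|\varphi_x\|_{L^{\infty}} &\leq \sqrt{2}\,\|\varphi_x\|^{1/2}\|\varphi_{xx}\|^{1/2} \leq C(1+t)^{-5/4} \leq C(1+t)^{-3/4},\\
\|\psi\|_{L^{\infty}} &\leq \sqrt{2}\,\|\psi\|^{1/2}\|\psi_x\|^{1/2} \leq C(1+t)^{-5/4},\\
\|\zeta\|_{L^{\infty}} &\leq \sqrt{2}\,\|\zeta\|^{1/2}\|\zeta_x\|^{1/2} \leq C(1+t)^{-3/4}.
\end{align*}
Combining these with the exponential bounds on $(\hat\rho,\hat m,\hat\phi)$ by the triangle inequality yields exactly the three stated $L^\infty$ rates.

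This argument is essentially routine once Theorem \ref{thm2} is granted, so I do not expect a genuine obstacle; the only minor point worth verifying carefully is the degenerate case $b=\alpha$ in \eqref{3.8a}, where $\hat\phi$ picks up a linear-in-$t$ prefactor, but $t\,e^{-bt}=o((1+t)^{-N})$ for every $N$ so the conclusion is unaffected. I would also note that the estimate for $\|\varphi_x\|_{L^\infty}$ is actually sharper than the corollary advertises, which confirms that the claimed $(1+t)^{-3/4}$ rate for $\rho-\bar\rho$ holds comfortably.
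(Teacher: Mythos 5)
Your proposal is correct and follows essentially the same route as the paper: the paper also obtains the corollary immediately from the decomposition $\rho-\bar\rho=\varphi_x+\hat\rho$, $m-\bar m=\psi+\hat m$, $\phi-\bar\phi=\zeta+\hat\phi$, the $L^2$ decay rates of Theorem \ref{thm2} combined with the Sobolev inequality $\|f\|_{L^\infty}^2\le 2\|f\|\,\|f_x\|$, and the exponential bounds on the correction functions from Lemma \ref{l4.2}. Your remark that the $\varphi_x$ estimate comes out sharper than the stated $(1+t)^{-3/4}$ is harmless, since the claimed rate is what the uniform-in-time bounds of the underlying proposition guarantee and the corollary only asserts that rate.
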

\subsubsection{The case of $\rho_0(0)= \rho_{+}$}

In this case, we take
\begin{equation}\notag
(\bar{\rho},\bar{m}, \bar\phi)(x,t)\equiv (\rho_{+},0,\frac{a}{b}\rho_{+}),
\end{equation}
 and the correction function is the same as that in \eqref{3.8}--\eqref{3.8a}.
Then define the
	perturbation as
	\begin{equation}\notag
		\left\{
		\begin{array}{lr}
			\varphi(x,t)=-\displaystyle\int^{+\infty}_{x}(\rho-\rho_{+}-\hat\rho)(y,t){\rm d}y,\\[2mm]
			\psi(x,t)=(m-\hat m)(x,t),\\[2mm]
			\zeta(x,t)=(\phi-\frac{a}{b}\rho_{+}-\hat\phi)(x,t).
		\end{array}
		\right.
	\end{equation}
Similar to \eqref{3.10}, we can deduce the following reformulated problem 
	\begin{equation}\label{n3}
		\left\{
		\begin{array}{lr}
			\varphi_t+\psi=0,\\[2mm]
			\psi_t+\displaystyle\left(\frac{(\psi+\hat m)^2}{\varphi_x+\rho_{+}+\hat\rho}\right)_x+\left[p(\varphi_x+\rho_{+}+\hat\rho)-p(\rho_{+})\right]_x\\[2mm]
			=\mu \varphi_x(\zeta_x+\hat\phi_x)+\mu\rho_{+}(\zeta_x+\hat\phi_x)+\mu\hat\rho(\zeta_x+\hat\phi_x)-\alpha \psi-\bar m_t,\\[2mm]
			\zeta_t=D\zeta_{xx}+a\varphi_x-b\zeta+D\hat\phi_{xx},\\[2mm]
			\varphi_x(0,t)=\zeta(0,t)=0,\\[2mm]
			(\varphi,\psi,\zeta)(x,t)|_{t=0}=(\varphi_0,\psi_0,\zeta_0)(x)\to0,\;\;\;\mbox{as}\;\;\; x\to+\infty.
		\end{array}
		\right.
	\end{equation}

	\begin{theorem}\label{thm3}
		Suppose that \eqref{23} holds, $\|\varphi_{0}\|_{3}+\|\psi_{0}\|_{2}+\|\zeta_{0}\|_{4}$ is sufficiency small.
		then the global solution $(\varphi,\psi,\zeta)(x,t)$ of \eqref{n3} uniquely exists and satisfies the following decay rates:
		\begin{equation}\notag
			\left\{
			\begin{array}{lr}
				\|\partial_x^k\varphi(t)\|\leq C (1+t)^{-\frac{1}{2}(k+1)},\;\;\;k=0,1,2,\\[2mm]
				\|\partial_x^k\psi(t)\|\leq C (1+t)^{-\frac{1}{2}(k+2)},\;\;\;k=0,1,2,\\[2mm]
				\|\partial_x^k\zeta(t)\|\leq C (1+t)^{-\frac{1}{2}(k+1)},\;\;\;k=0,1.
			\end{array}
			\right.
		\end{equation}
	\end{theorem}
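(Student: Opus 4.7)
\textbf{Proof proposal for Theorem \ref{thm3}.} The plan is to mirror the energy framework underlying Theorem \ref{thm2}, substantially simplified by the fact that here $(\bar\rho,\bar m,\bar\phi)\equiv(\rho_+,0,\frac{a}{b}\rho_+)$ is a genuine constant solution of \eqref{6}. Every spatial and temporal derivative of the background therefore vanishes, so the source terms $\bar m_t$, $\bar\phi_t$, $D\bar\phi_{xx}$ and $\frac{1}{\alpha}q(\bar\rho)_t$ that appeared in the analogue of \eqref{3.12} drop out, $f$ reduces to the Taylor remainder $-[p(\varphi_x+\rho_++\hat\rho)-p(\rho_+)-p'(\rho_+)\varphi_x]$, and $g=D\hat\phi_{xx}$. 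The system \eqref{n3} can then be rewritten in the second-order form analogous to \eqref{3.11} with $\bar\rho$ replaced by the constant $\rho_+$, so that its linear principal part is the constant-coefficient damped wave operator $\partial_t^2-p'(\rho_+)\partial_x^2+\alpha\partial_t$ coupled to the heat operator $\partial_t-D\partial_x^2$ for $\zeta$. The boundary conditions $\varphi_x(0,t)=0$ and $\zeta(0,t)=0$, together with $\partial_x^k\hat\rho(0,t)=\partial_x^k\hat\phi(0,t)=\hat m_x(0,t)=0$ inherited from \eqref{3.9} and the support properties of $q_0$, guarantee that every boundary contribution produced by integration by parts in $x$ vanishes.

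After constructing a local solution by a standard iteration scheme, I would derive time-weighted a priori estimates under the bootstrap hypothesis $N(T)\le\varepsilon\ll 1$, where
\begin{equation*}
N(T):=\sup_{0\le t\le T}\Bigl\{\sum_{k=0}^{2}(1+t)^{\frac{k+1}{2}}\|\partial_x^k\varphi(t)\|+\sum_{k=0}^{2}(1+t)^{\frac{k+2}{2}}\|\partial_x^k\psi(t)\|+\sum_{k=0}^{1}(1+t)^{\frac{k+1}{2}}\|\partial_x^k\zeta(t)\|\Bigr\}
\end{equation*}
encodes the decay rates claimed in Theorem \ref{thm3}. The core step is to multiply the $\varphi$ equation by $(1+t)^\lambda\varphi_t$ and the $\zeta$ equation by $(1+t)^\lambda\frac{\mu b}{a}\rho_+\zeta$; after integration in $x$, the cross contribution $\mu\rho_+\zeta_x\varphi_t$ is integrated by parts and combines with $p'(\rho_+)\varphi_x^2$ and $\frac{b\mu}{a}\rho_+\zeta^2$ into the coercive quadratic form of Remark \ref{remark32} evaluated at the constant state, yielding the lower bound $c(\varphi_x^2+\zeta^2)$; the damping $\alpha\varphi_t^2$ and the parabolic dissipation $D\zeta_x^2$ supply the remaining positive contributions. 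Differentiating the equations once and twice and repeating the same procedure with appropriately larger time weights produces the higher-order estimates. The nonlinear terms hidden in $f$ and $h$, together with the correction-function contributions $\mu\rho_+\hat\phi_x$, $\mu\hat\rho\hat\phi_x$, $D\hat\phi_{xx}$ and $\hat m_x$, are all absorbed via the bootstrap on $N(T)$, the smallness of $\varepsilon_0$, and the explicit exponential time-decay of $(\hat\rho,\hat m,\hat\phi)$.

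The main obstacle I anticipate is achieving the sharp decay rate $(1+t)^{-(k+2)/2}$ for $\partial_x^k\psi$, most critically for $k=2$, since $\psi=-\varphi_t$ and this rate is strictly faster than what the natural energy identity for $\partial_x^2\varphi_t$ delivers. The remedy is to invoke the second-order equation itself: solving for $\varphi_{tt}$ in terms of $\varphi_{xx}$, $\varphi_t$, $\zeta_x$ and the small source lets one transfer the already-established decay of $\|\partial_x^2\varphi\|$ and $\|\partial_x\zeta\|$ onto the temporal derivative, from which the required rate for $\|\partial_x^2\psi\|$ follows. Once the a priori bound $N(T)\le C(\|\varphi_0\|_3+\|\psi_0\|_2+\|\zeta_0\|_4+\varepsilon_0)$ is closed under the smallness hypothesis, the standard continuation argument upgrades the local solution to a global one and delivers the decay rates stated in Theorem \ref{thm3}.
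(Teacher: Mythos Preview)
Your overall strategy is correct and matches the paper's own treatment, which simply records that Theorem \ref{thm3} follows by the argument for Theorem \ref{thm2} (itself a rerun of the Section \ref{s3} machinery), the constant background $(\bar\rho,\bar m,\bar\phi)\equiv(\rho_+,0,\frac{a}{b}\rho_+)$ making every step strictly easier exactly as you say.

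One point in your description needs correction, however. The multiplier pairing you call the ``core step''---testing the wave equation against $(1+t)^\lambda\varphi_t$ and the heat equation against $(1+t)^\lambda\frac{\mu b}{a}\rho_+\zeta$---does not produce the coercive form of Remark \ref{remark32} in the way you claim. Multiplication by $\varphi_t$ places $p'(\rho_+)\varphi_x^2$ into the \emph{energy} (under a time derivative), whereas multiplication by $\zeta$ places $b\zeta^2$ into the \emph{dissipation}; these sit in different slots of the identity and cannot be fused into a single positive-definite quadratic form controlling $\varphi_x^2+\zeta^2$. The residual cross term $\mu\rho_+\zeta_t\varphi_x$ left over after your integration by parts is genuinely indefinite. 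What the paper actually does (Lemmas \ref{l3.3}--\ref{l3.4}) is run \emph{two} pairings: $(\varphi,\frac{\mu}{a}\bar\rho\zeta)$ yields $p'\varphi_x^2-2\mu\bar\rho\zeta\varphi_x+\frac{b\mu}{a}\bar\rho\zeta^2$ as dissipation, and $(\varphi_t,\frac{\mu}{a}\bar\rho\zeta_t)$ yields the same form as energy together with dissipation of $\varphi_t^2+\zeta_t^2$; a small-multiple combination of the two closes the estimate. With this repair the remainder of your plan---higher-order estimates by differentiation, absorption of the $\hat\rho,\hat m,\hat\phi$ contributions via their exponential decay, and recovery of the top-order $\partial_x^2\psi$ rate from the equation as in Lemma \ref{lem53}---goes through.
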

	Therefore, we have the following corollary.	
\begin{corollary}[{\rm Converge to constant states}]
		Suppose that all the conditions in
		Theorem \ref{thm3} hold, the system \eqref{1}--\eqref{3} and \eqref{3b}  possesses a unique global solution $(\rho,m,\phi)(x,t)$ satisfying
		\begin{equation}\label{22b}
			\left\{
			\begin{array}{lr}
				\|(\rho-\rho_+)(t)\|_{L^{\infty}}\leq C (1+t)^{-\frac{3}{4}},\\[2mm]
				\| m(t)\|_{L^{\infty}}\leq C (1+t)^{-\frac{5}{4}},\\[2mm]
				\|\left(\phi-\frac{a}{b}\rho_{+}\right)(t)\|_{L^{\infty}}\leq C (1+t)^{-\frac{3}{4}}.	
			\end{array}
			\right.
		\end{equation}
	\end{corollary}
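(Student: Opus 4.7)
The plan is to derive \eqref{22b} as a direct consequence of the $L^{2}$ decay rates furnished by Theorem \ref{thm3}, combined with pointwise exponential-in-$t$ bounds on the correction function $(\hat\rho,\hat m,\hat\phi)$ read off from the explicit formulas \eqref{3.8}--\eqref{3.8a}. From the definitions of $\varphi,\psi,\zeta$ in the previous subsection one has the elementary identities
\[
\rho-\rho_{+}=\varphi_{x}+\hat\rho,\qquad m=\psi+\hat m,\qquad \phi-\tfrac{a}{b}\rho_{+}=\zeta+\hat\phi,
\]
so by the triangle inequality it suffices to bound each of these six summands in $L^{\infty}(\R^{+})$.

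For the perturbation parts I would apply the one-dimensional Gagliardo--Nirenberg interpolation on the half-line, $\|f\|_{L^{\infty}}^{2}\le 2\,\|f\|\,\|f_{x}\|$, and then substitute the rates $\|\partial_{x}^{k}\varphi(t)\|\lesssim(1+t)^{-(k+1)/2}$, $\|\partial_{x}^{k}\psi(t)\|\lesssim(1+t)^{-(k+2)/2}$ and $\|\partial_{x}^{k}\zeta(t)\|\lesssim(1+t)^{-(k+1)/2}$ supplied by Theorem \ref{thm3}. This gives $\|\varphi_{x}\|_{L^{\infty}}^{2}\lesssim\|\varphi_{x}\|\,\|\varphi_{xx}\|\lesssim(1+t)^{-5/2}$, hence $\|\varphi_{x}\|_{L^{\infty}}\lesssim(1+t)^{-5/4}$ (which is actually stronger than the rate $(1+t)^{-3/4}$ claimed in \eqref{22b} for $\rho-\rho_{+}$); by the same argument $\|\psi\|_{L^{\infty}}\lesssim(1+t)^{-5/4}$, while $\|\zeta\|_{L^{\infty}}^{2}\lesssim\|\zeta\|\,\|\zeta_{x}\|\lesssim(1+t)^{-3/2}$ yields $\|\zeta\|_{L^{\infty}}\lesssim(1+t)^{-3/4}$. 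The Sobolev regularity required for the interpolation on each summand is available because Theorem \ref{thm3} controls $\varphi$ in $H^{3}$, $\psi$ in $H^{2}$ and $\zeta$ in $H^{4}$.

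The correction function is handled directly from \eqref{3.8}--\eqref{3.8a}: since $q_{0}\in C_{0}^{\infty}(\R^{+})$ is bounded and compactly supported, one reads off $|\hat\rho(x,t)|+|\hat m(x,t)|\lesssim e^{-\alpha t}$ and $|\hat\phi(x,t)|\lesssim (1+t)\,e^{-\min(\alpha,b)\,t}$ uniformly in $x$, in both the cases $b=\alpha$ and $b\ne\alpha$. Every such exponential is dominated by $(1+t)^{-\gamma}$ for any $\gamma>0$, so these contributions are absorbed into the algebraic rates above, and the three bounds in \eqref{22b} follow by the triangle inequality. Global existence and uniqueness of $(\rho,m,\phi)$ transfer from the corresponding statements for $(\varphi,\psi,\zeta)$ in Theorem \ref{thm3} through the invertible affine change of variables displayed above; there is no serious obstacle, the whole argument being a matter of interpolation and bookkeeping once Theorem \ref{thm3} is in hand.
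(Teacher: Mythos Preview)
Your proposal is correct and follows essentially the same approach that the paper indicates (cf.\ the sentence preceding Corollary~2.1, where the paper simply notes $\rho-\bar\rho=\varphi_{x}+\hat\rho$, $m-\bar m=\psi+\hat m$, $\phi-\bar\phi=\zeta+\hat\phi$ and invokes the decay estimates on $(\varphi,\psi,\zeta)$ together with Lemma~\ref{l3.2} on $(\hat\rho,\hat m,\hat\phi)$). You have written out explicitly the Gagliardo--Nirenberg interpolation step and the pointwise bounds on the correction function that the paper leaves implicit, and your observation that the $\varphi_{x}$ contribution actually decays like $(1+t)^{-5/4}$ rather than the stated $(1+t)^{-3/4}$ is a valid remark.
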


\section{Proof of Theorem \ref{thm1}}\label{s3}
\subsection{Preliminaries}

In this subsection, we introduce some results on some fundamental properties of the diffusion wave $(\bar{\rho},\bar{m}, \bar\phi)(x,t)$ and the correction function $(\hat{\rho},\hat{m},\hat\phi)(x,t)$, which will be used frequently later.
Firstly, as for the solution $(\bar{\rho},\bar{m}, \bar\phi)(x,t)$ to IBVP \eqref{6}--\eqref{7}, we can obtain the following lemma.
\begin{lemma}\label{l3.1}
 For IBVP \eqref{6}$_1$--\eqref{7}, we define $\bar{\rho}(x,0)=\rho_{+}+\delta_{0}w_0(x)$ as the initial value of this system, where $w_{0}$ is a given sufficiently smooth function satisfying  $w_{0}\in L^{1}(\mathbb{R}^{+})~ \mbox{and} ~\int_{0}^{+\infty}w_{0} {\rm d}x\neq0$, and $\delta_{0}$ is a constant satisfying \eqref{15}.
 Then for $k,j\geq 0$, the solution $(\bar{\rho},\bar{m}, \bar\phi)(x,t)$ to IBVP \eqref{6}--\eqref{7}  is 
 globally exists, and satisfies
\begin{equation}\label{3.15}
 \left\{\begin{array}{l}
\|\partial^{k}_{x}\partial^{j}_{t}(\bar{\rho}-\rho_{+})(t)\|\leq C|\delta_{0}|(1+t)^{-\frac{k}{2}-j},\\[2mm]
\|\partial^{k}_{x}\partial^{j}_{t}(\bar{\phi}-\frac{a}{b}\rho_{+})(t)\|\leq C|\delta_{0}|(1+t)^{-\frac{k}{2}-j},\\[2mm]
\|\partial^{k}_{x}\partial^{j}_{t}\bar{m}(t)\|\leq C|\delta_{0}|(1+t)^{-\frac{1}{2}-\frac{k}{2}-j}.
  \end{array}\right.
\end{equation}
\end{lemma}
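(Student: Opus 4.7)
The plan is to reduce the system to a single quasilinear parabolic equation for $w := \bar\rho - \rho_+$ and then recover $\bar m$, $\bar\phi$ algebraically. Substituting $\bar m = -\frac{1}{\alpha}q(\bar\rho)_x$ and $\bar\phi = \frac{a}{b}\bar\rho$ into the second form of \eqref{6} gives
\[
w_t - \tfrac{1}{\alpha}\bigl(q'(\rho_+ + w)\,w_x\bigr)_x = 0, \qquad w_x(0,t)=0, \qquad w(x,0)=\delta_0 w_0(x),
\]
with $w\to 0$ as $x\to+\infty$. Assumption \eqref{23}$_2$ gives $q'(\rho)=p'(\rho)-\frac{a\mu}{b}\rho>0$, so the equation is uniformly parabolic in a neighborhood of $\rho_+$; local existence in $H^\ell$ is standard and global existence follows from the a priori estimates below once $|\delta_0|$ is chosen sufficiently small.

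For the boundedness step ($k=j=0$), I would perform the basic $L^2$ energy estimate by multiplying by $w$, integrating over $\mathbb{R}^+$, and using the Neumann condition at $x=0$ to kill the boundary term. This yields $\tfrac{1}{2}\tfrac{d}{dt}\|w\|^2 + c\|w_x\|^2 \leq C\|w\|_{L^\infty}\|w_x\|^2$, which for small $|\delta_0|$ gives $\|w(t)\|^2+\int_0^t\|w_x(\tau)\|^2\,d\tau\leq C|\delta_0|^2$. To obtain decay of the higher $x$-derivatives, I would adopt the time-weighted energy strategy of \cite{Nishihara1996, Nishihara-Yang1999}: differentiate the equation $k$ times in $x$, multiply by $(1+t)^k\partial_x^k w$, integrate by parts, and iterate in $k$ to obtain
\[
(1+t)^k\|\partial_x^k w(t)\|^2 + \int_0^t (1+\tau)^k\|\partial_x^{k+1} w(\tau)\|^2\,d\tau \leq C|\delta_0|^2,
\]
which is equivalent to $\|\partial_x^k w\|\leq C|\delta_0|(1+t)^{-k/2}$.

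Time-derivative bounds then come for free from the equation by repeatedly trading $\partial_t$ for $\partial_x^2$, so $\|\partial_x^k\partial_t^j w\|\lesssim\|\partial_x^{k+2j}w\|\leq C|\delta_0|(1+t)^{-k/2-j}$. The identity $\bar m=-\frac{1}{\alpha}q'(\bar\rho)\bar\rho_x$ carries one extra spatial derivative, accounting for the improved $(1+t)^{-1/2-k/2-j}$ factor for $\bar m$, and the estimates for $\bar\phi-\frac{a}{b}\rho_+=\frac{a}{b}w$ coincide with those for $w$.

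The main obstacle is the bookkeeping at each order of differentiation. First, the boundary terms arising from integration by parts must be shown to vanish at each step: this reduces to the odd-order spatial derivatives $\partial_x^{2\ell+1}w(0,t)$ vanishing, which I would verify inductively from $w_x(0,t)=0$ together with the relation $w_t=\frac{1}{\alpha}q(w+\rho_+)_{xx}$ evaluated at $x=0$. Second, differentiating the quasilinear coefficient $q'(\rho_++w)$ produces lower-order commutator terms that must be absorbed into the principal part, using the smallness of $|\delta_0|$ (so that $\|w\|_{L^\infty}\ll 1$ by Sobolev embedding) together with the inductively obtained decay estimates at lower orders. Once organized into a single closed induction on $k$, the bounds in \eqref{3.15} follow.
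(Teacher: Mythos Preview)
Your proposal is correct and follows essentially the same route as the paper: both reduce \eqref{6}--\eqref{7} to the scalar quasilinear parabolic equation for $\bar w=\bar\rho-\rho_+$, obtain the basic $L^2$ estimate by testing against $\bar w$, then climb in $k$ via time-weighted energy estimates, and finally read off the bounds for $\bar m$ and $\bar\phi$ from the algebraic relations in \eqref{6}. The only cosmetic differences are that the paper tests the undifferentiated equation against $-\bar w_{xx}$ (rather than first differentiating and then testing against $\partial_x^k\bar w$) and that it closes the nonlinear terms through an explicit bootstrap assumption \eqref{3.19} on $N(T)$ rather than invoking smallness of $|\delta_0|$ directly; both choices lead to the same hierarchy \eqref{3.20}--\eqref{3.24} and the same conversion of $\partial_t$ into $\partial_x^2$ via the equation.
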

\begin{proof}
Inspired by \cite{Mei2005}, we construct the diffusion wave $\bar\rho(x,t)$ as
\begin{equation}\label{3.16}
\bar\rho(x,t)=\rho_++\delta_{0}w(x,t).
\end{equation}
Then from \eqref{6}, we have 
 \begin{equation}\label{3.17}
 	\left\{
		\begin{array}{lr}
			\delta_{0}w_t-\displaystyle\frac{1}{\alpha}(q'(\rho_++\delta_{0}w_x)\delta_{0}w)_{x}=0,     \\[2mm]
			w_x(0,t)=0,\ \ w(+\infty,t)=0, \\[2mm]
			w(x,0)=w_0(x),
		\end{array}
		\right.
 \end{equation}
 or 
 \begin{equation}\label{3.18}
 	\left\{
		\begin{array}{lr}
			\bar w_t-\displaystyle\frac{1}{\alpha}(q'(\rho_++\bar w)\bar w_x)_{x}=0,     \\[2mm]
			\bar w_x(0,t)=0,\ \ \bar w(+\infty,t)=0, \\[2mm]
			\bar w(x,0):=\bar w_0(x)=\delta_{0}w_0(x).
		\end{array}
		\right.
 \end{equation}
 Now we will use the energy method to prove the existence and uniqueness of the solution of \eqref{3.17} or \eqref{3.18}.
 The local existence of the solution is standard, then to get the global existence,
we only prove the following a {\it priori} estimates.
For $T>0$, we denote
\begin{align}\label{3.19}
N(T):= \sup \limits_{0 \leq t \leq T}\sum_{k,j\in \mathbb{N}}(1+t)^{k+2j}\|\partial_{x}^{k}\partial_{t}^{j} \bar w(t)\|^{2}\leq \varepsilon^{2}\ll1,
\end{align}
then for $k,j\in \mathbb{N}^+$, we prove that
\begin{equation}\label{3.19a}
\|\partial_x^k\partial_t^j\bar w_{t}\|\lesssim\|\bar w_{0}\|_{k+2j}(1+t)^{-\frac{k}{2}-j},	
\end{equation}
which implies \eqref{3.15}$_1$.
Multiplying \eqref{3.18}$_1$ by $\bar w$ and integrating it with respect to $x$ over $\mathbb{R}^+$ give
\begin{equation}\notag
 \frac{{\rm d}}{{\rm d}t}\int_{\mathbb{R}^+} \bar{w}^2{\rm d} x+\frac{1}{\alpha}\int_{\mathbb{R}^+} q'(\rho_++\bar w)\bar w_{x}^{2}{\rm d}x =0.
\end{equation}
Noticing that $q'(\rho_++\bar w)>c>0$, we get
\begin{equation}\label{3.20}
   \|\bar w(t)\|^{2}+\int_{0}^{t}\|\bar w_{x}(\tau)\|^{2}{\rm d}\tau \lesssim  \|\bar w_{0}\|^{2}.
  \end{equation}
Then multiplying \eqref{3.18}$_1$ by $-{\bar w}_{xx}$ and integrating it with respect to $x$ over $\mathbb{R}^+$, we have
\begin{align}\label{3.20aa}
 \frac{{\rm d}}{{\rm d}t}\int_{\mathbb{R}^+} \bar{w}_x^2{\rm d} x+\frac{1}{\alpha}\int_{\mathbb{R}^+} q'(\rho_++\bar w)\bar w_{xx}^{2}{\rm d}x &=-\frac{1}{\alpha}\int_{\mathbb{R}^+} q''(\rho_++\bar w)\bar w_{x}^2\bar w_{xx}{\rm d}x\notag\\
 &\leq C\|\bar w_{x}\|_{L^\infty}\|\bar w_{x}\|\|\bar w_{xx}\|\notag\\
 &\leq \eta\|\bar w_{xx}\|^2+C_\eta\varepsilon^2(1+t)^{-\frac{3}{2}}\|\bar w_{x}\|^2.
\end{align}
Integrating the above inequality over $[0, t]$ and using \eqref{3.20}, we obtain
\begin{equation}\label{3.21}
   \|\bar w_x(t)\|^{2}+\int_{0}^{t}\|\bar w_{xx}(\tau)\|^{2}{\rm d}\tau \lesssim  \|\bar w_{0}\|_1^{2}.
  \end{equation}
  Then multiplying \eqref{3.20aa} by $(1 + t)$, integrating it with respect to $t$ and using \eqref{3.20} and \eqref{3.21} lead to
\begin{equation}\label{3.22}
  (1+t) \|\bar w_x(t)\|^{2}+\int_{0}^{t}(1+\tau)\|\bar w_{xx}(\tau)\|^{2}{\rm d}\tau \lesssim  \|\bar w_{0}\|_1^{2}.
  \end{equation}
Similarly, we can calculate
\begin{align}\notag
 \frac{{\rm d}}{{\rm d}t}\int_{\mathbb{R}^+} \bar{w}_{xx}^2{\rm d} x+\frac{1}{\alpha}\int_{\mathbb{R}^+} q'(\rho_++\bar w)\bar w_{xxx}^{2}{\rm d}x &\leq C\int_{\mathbb{R}^+} (|\bar w_x|^3+|\bar w_x\bar w_{xx}|)|\bar w_{xxx}|{\rm d}x\notag\\
 &\leq C(\|\bar w_{x}\|^2_{L^\infty}\|\bar w_{x}\|+\|\bar w_{x}\|_{L^\infty}\|\bar w_{xx}\|)\|\bar w_{xxx}\|\notag\\
 &\leq \eta\|\bar w_{xxx}\|^2+C_\eta\varepsilon^2((1+t)^{-3}\|\bar w_{x}\|^2+(1+t)^{-\frac{3}{2}}\|\bar w_{xx}\|^2).\notag
\end{align}
Then we can prove
\begin{equation}\label{3.23}
  (1+t)^2 \|\bar w_{xx}(t)\|^{2}+\int_{0}^{t}(1+\tau)^2\|\bar w_{xxx}(\tau)\|^{2}{\rm d}\tau \lesssim  \|\bar w_{0}\|_2^{2}.
  \end{equation}
More generally, for $k\in \mathbb{N}$, we can get 
\begin{equation}\label{3.24}
  (1+t)^k \|\partial_x^k\bar w(t)\|^{2}\lesssim  \|\bar w_{0}\|_k^{2}.
  \end{equation} 
For $\|\partial_x^k\bar w_{t}\|$, by applying \eqref{3.18}$_1$, we can estimate
\begin{equation}\notag
\|\partial_x^k\bar w_{t}\|\lesssim\|\bar w_{0}\|_{k+2}(1+t)^{-\frac{k}{2}-1}.	
\end{equation}
To sum up, we can prove \eqref{3.19a}. Then notice that $\bar m=-\frac{1}{\alpha}q(\bar\rho)_x, \bar\phi=\frac{a}{b}\bar\rho,$ we can also get \eqref{3.15}$_2$ and \eqref{3.15}$_3$.
\end{proof}
\begin{remark}
	 It should be noted that from \eqref{15}, we can immediately obtain
	 \begin{equation}\label{3.25a}
	|\delta_0|\leq C\left|\int_{0}^{+\infty} (v_0(x) - v_+){\rm d}x - \frac{m_+}{\alpha}\right|. 	
	 \end{equation}
\end{remark}

Next, as shown in \eqref{11}, the correction function  $(\hat{\rho},\hat{m},\hat\phi)(x,t)$ satisfies the following estimates.
\begin{lemma}\label{l3.2}
 For $p \in [1, \infty]$, it holds that for $k, j \geq 0$,
 \begin{equation}\label{3.25}
 \left\{\begin{array}{l}
    \left\|\partial_{x}^{k} \partial_{t}^{j} \hat{\rho}(t)\right\|_{L^{p}}\lesssim\varepsilon_{0}^{k+1-\frac{1}{p}} {\rm e}^{-ct}, \\[2mm]
    \left\|\partial_{x}^{k} \partial_{t}^{j} [\hat{m}_x,\hat{\phi}_x](t)\right\|_{L^{p}}\lesssim\varepsilon_{0}^{k+1-\frac{1}{p}}{\rm e}^{-ct}, \\[2mm]
    \left\|\partial_{t}^{j}[\hat{m},\hat{\phi}](t)\right\|_{L^{\infty}}\lesssim{\rm e}^{-ct}.
\end{array}\right.
\end{equation}
\end{lemma}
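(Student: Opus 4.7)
The proof is entirely computational: every component of $(\hat\rho,\hat m,\hat\phi)$ has an explicit closed form in the variables $x$ and $t$, so each estimate reduces to differentiating those formulas and computing an $L^p$ norm via a scaling substitution. I would organize the argument around the three components separately.

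First, for $\hat\rho(x,t)=\frac{\varepsilon_0 m_+}{\alpha}q_0(\varepsilon_0 x)e^{-\alpha t}$, direct differentiation gives
\begin{equation*}
\partial_x^k\partial_t^j\hat\rho(x,t)=\frac{(-\alpha)^j\varepsilon_0^{k+1}m_+}{\alpha}\,q_0^{(k)}(\varepsilon_0 x)\,e^{-\alpha t}.
\end{equation*}
Taking the $L^p(\mathbb{R}^+)$ norm and substituting $y=\varepsilon_0 x$ yields $\|q_0^{(k)}(\varepsilon_0\cdot)\|_{L^p}=\varepsilon_0^{-1/p}\|q_0^{(k)}\|_{L^p}$, which is finite since $q_0\in C_0^\infty$. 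Collecting the powers of $\varepsilon_0$ produces the factor $\varepsilon_0^{k+1-1/p}$, and the time factor $e^{-\alpha t}$ gives the exponential decay $e^{-ct}$ for any $c\leq\alpha$. This proves the first estimate in \eqref{3.25}.

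Second, for $\hat m_x=\varepsilon_0 m_+ q_0(\varepsilon_0 x)e^{-\alpha t}$ the same scaling argument applies verbatim: each spatial derivative brings down another $\varepsilon_0$, and each time derivative only multiplies by $-\alpha$, so the same $\varepsilon_0^{k+1-1/p}e^{-ct}$ bound holds. The $\hat\phi_x$ case splits according to whether $b=\alpha$. When $b\neq\alpha$, $\hat\phi$ is a combination of terms of the shape $q_0(\varepsilon_0 x)e^{-\alpha t}$ and $q_0(\varepsilon_0 x)e^{-bt}$ (together with an $\int_0^{\varepsilon_0 x}q_0\,dy$ piece, whose $x$-derivative reintroduces $\varepsilon_0 q_0(\varepsilon_0 x)$), so differentiating $\hat\phi_x$ yields exactly the same structure as above with $e^{-ct}$, $c:=\min(\alpha,b)$. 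When $b=\alpha$, an additional factor $t\,e^{-bt}$ appears; since $t^N e^{-bt}\lesssim e^{-ct}$ for any $c<b$ and any $N\geq 0$, each additional $t$-differentiation only produces polynomial-in-$t$ prefactors that are absorbed into a slightly smaller exponential rate. This handles the second estimate.

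Third, for the uniform bounds, one uses the normalization $\int_0^{+\infty}q_0\,dy=1$ from \eqref{13}: directly from the formula for $\hat m$,
\begin{equation*}
\|\partial_t^j\hat m(t)\|_{L^\infty}\leq |m_+|\,\alpha^j\,e^{-\alpha t}\cdot\Bigl\|\int_0^{\varepsilon_0 x}q_0(y)\,dy\Bigr\|_{L^\infty}\leq C e^{-\alpha t},
\end{equation*}
and the analogous computation for $\hat\phi$ bounds the two summands by $|d_+|e^{-bt}$ and by $(1+t)e^{-bt}$ or by $e^{-\alpha t}+e^{-bt}$ according to the case, each of which is $\lesssim e^{-ct}$. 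No conceptual obstacle arises: the only mildly delicate point is the bookkeeping of the constant $c$ in the resonant case $b=\alpha$, where one must pick $c$ strictly smaller than $b$ to absorb the polynomial-in-$t$ prefactors coming from differentiating the $t\,e^{-bt}$ term; everywhere else $c=\min(\alpha,b)$ suffices.
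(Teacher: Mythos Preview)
Your proof is correct and is exactly the computation the paper leaves implicit: the paper states Lemma~\ref{l3.2} without proof, treating the estimates as immediate consequences of the explicit formulas for $(\hat\rho,\hat m,\hat\phi)$ derived just above. Your scaling argument $\|q_0^{(k)}(\varepsilon_0\cdot)\|_{L^p}=\varepsilon_0^{-1/p}\|q_0^{(k)}\|_{L^p}$ and the handling of the resonant case $b=\alpha$ via absorbing polynomial prefactors into a slightly smaller exponential rate are precisely what is needed.
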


\subsection{Global existence}\label{s3.2}
In this subsubsection, we aim to study global existence and uniqueness of solution to \eqref{20}--\eqref{22} in the $L^2$-framework. 
It is well known that the global existence
can be obtained by the continuation argument based on the local existence of solution and a {\it priori}
estimates. The local existence of the solution for \eqref{20}--\eqref{22} can be proved by the standard iteration
method (cf.\cite{Nishida1978,ma1977}). Next, we devote ourselves to establishing the following a {\it priori} estimates.
	\begin{proposition}\label{p3.1}
		Under the condition of Theorem \ref{thm1},  and further assume that $\delta:=\left|\delta_{0}\right|+\varepsilon_0^{\frac{1}{2}}$ and $\|\varphi_{0}\|_{3}+\|\psi_{0}\|_{2}+\|\zeta_{0}\|_{4}$ are sufficiency small. 
		Then for any given $T>0$, there are positive constants $\varepsilon$ and $C$ such that if the solution $(\varphi, \zeta)(x, t)$ to \eqref{20}--\eqref{22} on $0\leq t\leq T$ satisfies
		\begin{align}\label{13.15}
		&\sup_{0\leq t\leq T}\left\{\sum_{k=0}^{2}(1+t)^{k}\|\partial_x^k\varphi(t)\|^2+\sum_{k=0}^{2}(1+t)^{k+2}\|\partial_x^k\varphi_t(t)\|^2+\sum_{k=0}^{1}(1+t)^{k+1}\|\partial_x^k[\zeta,\zeta_x](t)\|^2\notag\right.\\ 
		&\ \ \ \ \ \ \ \ \ \ \ \ \ \ \ \ \ \ \ \ \ \ \ \ \ \ \ \ \left.+\sum_{k=0}^{1}(1+t)^{k+3}\|\partial_x^k[\zeta_t,\zeta_{xt}](t)\|^2+(1+t)^{2}\|(\partial_x^3\varphi,\partial_x^3\zeta)(t)\|^2\right\}\leq \varepsilon^2,
		\end{align}
		then we have
		\begin{align}\label{13.16}
			&\;\;\;\;\sum_{k=0}^{2}(1+t)^{k}\|\partial_x^k\varphi(t)\|^2+\sum_{k=0}^{1}(1+t)^{k+1}\|\partial_x^k[\zeta,\zeta_x](t)\|^2+\sum_{k=0}^{1}\int_{0}^{t}(1+\tau)^k\|\partial_x^k[\varphi_x,\zeta,\zeta_x](\tau)\|^2{\rm d}\tau\notag\\
			&\leq C(\|\varphi_0\|_3^2+\|\psi_0\|_2^2+\|\zeta_0\|_4^2+\delta),
		\end{align}
		and
		\begin{align}\label{13.17}
			&\;\;\;\;\sum_{k=0}^{2}(1+t)^{k+2}\|\partial_x^k\varphi_t(t)\|^2+\sum_{k=0}^{1}(1+t)^{k+3}\|\partial_x^k[\zeta_t,\zeta_{xt}](t)\|^2+(1+t)^{2}\|(\partial_x^3\varphi,\partial_x^3\zeta)(t)\|^2\notag\\
			&\;\;\;\;+\sum_{k=0}^{2}\int_{0}^{t}(1+\tau)^{k+1}\|\partial_x^k\varphi_{\tau}(\tau)\|^2{\rm d}\tau+\sum_{k=0}^{1}\int_{0}^{t}(1+\tau)^{k+2}\|\partial_x^k[\zeta_{\tau},\zeta_{x\tau}](\tau)\|^2{\rm d}\tau\notag\\
			&\leq C(\|\varphi_0\|_3^2+\|\psi_0\|_2^2+\|\zeta_0\|_4^2+\delta).
		\end{align}
	\end{proposition}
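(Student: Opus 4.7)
The plan is to close a bootstrap argument for the reformulated problem \eqref{20}--\eqref{22} via a hierarchy of weighted energy estimates. The first equation in \eqref{20} is a damped wave for $\varphi$ and the second is parabolic for $\zeta$; the linear coupling $-\mu\bar\rho\zeta_x$ and $a\varphi_x$ is handled by combining the natural multipliers $\varphi,\varphi_t$ on the wave side with rescaled multipliers $\zeta,\zeta_t$ on the parabolic side, chosen so that after integration by parts the cross terms reassemble into the positive definite quadratic form $p'(\bar\rho)\varphi_x^2-2\mu\bar\rho\varphi_x\zeta+\frac{b\mu}{a}\bar\rho\zeta^2$ governed by the matrix $P$ of Remark \ref{remark32}. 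The boundary conditions $\varphi(0,t)=\zeta_x(0,t)=0$ (together with $\varphi_t(0,t)=0$, which follows from $\varphi(0,t)\equiv 0$) annihilate every boundary contribution produced by integration by parts in $x$. Coercivity \eqref{positivedefine1}, combined with the linear damping $\alpha\varphi_t^2$ and the diffusive term $D\zeta_x^2$, then produces dissipation in $\|\varphi_t\|^2+\|\varphi_x\|^2+\|\zeta\|^2+\|\zeta_x\|^2$, which is the backbone of the $k=0$ case of \eqref{13.16}.

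First I would carry out the unweighted $k=0$ estimate in detail. The right-hand sides decompose into three groups. (i)~Cubic nonlinearities such as $\mu\varphi_x\zeta_x$, $\mu\varphi_x\hat\phi_x$ and the quadratic parts of $h,f$ are handled by Cauchy--Schwarz, the Sobolev embedding $H^1(\mathbb{R}^+)\hookrightarrow L^\infty$, and the smallness \eqref{13.15}, which lets them be absorbed into the dissipation. (ii)~Source terms involving the correction functions $\hat\rho,\hat m,\hat\phi$ decay exponentially by Lemma \ref{l3.2} and, once integrated in time, contribute $\lesssim\varepsilon_0^{1/2}$. (iii)~Profile-driven source terms $\bar m_t,\bar\phi_t,D\bar\phi_{xx}$ decay algebraically by Lemma \ref{l3.1} and contribute $\lesssim|\delta_0|$. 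The total nonhomogeneous contribution is therefore $\lesssim\delta$, matching the claim. Next I would apply $\partial_x^k$ to \eqref{20} for $k=1,2$ and run the same multiplier combination at each level; commutators with $p'(\bar\rho),\bar\rho,\bar\phi$ are lower-order and absorbed using Lemma \ref{l3.1}. Weighted-in-time estimates are then generated by multiplying the level-$k$ identity by $(1+t)^k$ (and by $(1+t)^{k+1}$ or $(1+t)^{k+2}$ for the $\varphi_t,\zeta_t$ variants), integrating in $t$, and absorbing the extra $-k(1+t)^{k-1}(\text{energy})$ from $\partial_t(1+t)^k$ into the dissipation obtained at level $k-1$. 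This is the standard Matsumura--Nishihara--Mei interpolating scheme and delivers \eqref{13.16}--\eqref{13.17}. The top-order bounds $\|\partial_x^3\varphi\|,\|\partial_x^3\zeta\|$ require no further energy estimate: I would solve \eqref{20}$_1$ and \eqref{20}$_2$ algebraically for $\partial_x^3\varphi$ and $\partial_x^3\zeta$ in terms of $\varphi_{tt},\varphi_t,\zeta_t$ and lower-order spatial derivatives already bounded, trading regularity for decay.

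The principal obstacle I expect is the bookkeeping needed to check that after differentiating $k$ times in $x$ the coupling between $\partial_x^k\varphi$ and $\partial_x^k\zeta$ still closes into a copy of the same $P$-quadratic form, so that \eqref{positivedefine1} can be invoked uniformly across the hierarchy and every commutator with $\bar\rho,\bar\phi$ admits a loss of at most one derivative controlled by the previous level together with Lemma \ref{l3.1}. A secondary, structural obstacle is the scaling of the correction function: since $\|q_0(\varepsilon_0\cdot)\|_{L^2}\sim\varepsilon_0^{-1/2}$ while $\|q_0(\varepsilon_0\cdot)\|_{L^1}\sim\varepsilon_0^{-1}$, Lemma \ref{l3.2} produces an $L^2$-source scaling as $\varepsilon_0^{1/2}$ rather than $\varepsilon_0$; this forces $\delta:=|\delta_0|+\varepsilon_0^{1/2}$ to be the correct smallness parameter, and no better power of $\varepsilon_0$ can appear without strengthening the hypotheses on $q_0$ in \eqref{13}.
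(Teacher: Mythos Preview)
Your plan is essentially the paper's own proof: the paper rewrites \eqref{20} as \eqref{29}, multiplies the wave part by $\varphi$ and $\varphi_t$ and the parabolic part by $\tfrac{\mu}{a}\bar\rho\zeta$ and $\tfrac{\mu}{a}\bar\rho\zeta_t$ so that the cross terms assemble into the $P$-form \eqref{positivedefine1}, handles the three groups of remainders exactly as you describe, iterates in $x$ with time-weights, and finally recovers $\|\partial_x^3\varphi\|,\|\partial_x^3\zeta\|$ algebraically from the equations (Lemma \ref{lem53}). The only point where your outline is slightly looser than the paper is in reaching the weights $(1+t)^{k+2}$ on $\partial_x^k\varphi_t$ and $(1+t)^{k+3}$ on $\partial_x^k\zeta_t$ in \eqref{13.17}: the paper does this not by pushing higher time-weights on the $x$-differentiated identities alone but by also applying $\partial_t$ and $\partial_{xt}$ to \eqref{29} and repeating the same multiplier pair $(\varphi_t,\varphi_{tt})$, $(\tfrac{\mu}{a}\bar\rho\zeta_t,\tfrac{\mu}{a}\bar\rho\zeta_{tt})$ (Lemma \ref{l3.5}); your phrase ``$\varphi_t,\zeta_t$ variants'' should be read in that sense.
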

	By applying \eqref{13.15}, one gets
	\begin{align}\label{13.18}
		\left\{
		\begin{array}{lr}
		\|\partial_x^k\varphi(t)\|_{L^{\infty}}\lesssim\varepsilon(1+t)^{-\frac{1}{4}-\frac{k}{2}} (k=0,1),\ \ \ \|\partial_x^k\varphi_t(t)\|_{L^{\infty}}\lesssim\varepsilon(1+t)^{-\frac{5}{4}-\frac{k}{2}}(k=0,1),\\[2mm]
			\|\partial_t^j\zeta(t)\|_{L^{\infty}}\lesssim\varepsilon(1+t)^{-\frac{3}{4}-j} (j=0,1),\ \ \ \|\partial_x^k\zeta_x(t)\|_{L^{\infty}}\lesssim\varepsilon(1+t)^{-1} (k=0,1),\\[2mm]
			\|\zeta_{xt}(t)\|_{L^{\infty}}\lesssim\varepsilon(1+t)^{-2},\ \  \|\varphi_{xx}(t)\|_{L^{\infty}}\lesssim\varepsilon(1+t)^{-1}. 
		\end{array}
		\right.
	\end{align}
	which will be used later. 
	In addition, from \eqref{7}, \eqref{13a} and \eqref{18}, one can immediately obtain the following boundary conditions
	\begin{equation}\label{13.19}
		\varphi(0,t)=\varphi_{xx}(0,t)=\zeta_x(0,t)=0, ~\mbox{etc}.
	\end{equation}
	Now, we are ready to prove Proposition \ref{p3.1}, which will be given by the following several lemmas.
	\begin{lemma}\label{l3.3}
	Under the hypothesis of Proposition \ref{p3.1}, it holds that
	\begin{align}\label{43}
			&\|\varphi(t)\|^2+(1+t)\|[\varphi_x,\varphi_t,\zeta,\zeta_x](t)\|^2+\int_{0}^{t}\left\{\|[\varphi_x,\zeta,\zeta_x](\tau)\|^2+(1+\tau)\|[\varphi_{\tau},\zeta_{\tau}](\tau)\|^2\right\}{\rm d}\tau\notag\\ 
			&\leq C(\|\varphi_0\|_1^2+\|\psi_0\|^2+\|\zeta_0\|_1^2+\delta)+C\int_{0}^{t}{\rm e}^{-c\tau}\left(\|\varphi(\tau)\|_2^2+\|\varphi_{\tau}(\tau)\|_1^2\right){\rm d}\tau.
		\end{align}
	\end{lemma}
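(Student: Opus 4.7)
\textbf{Proof plan for Lemma \ref{l3.3}.} I will derive \eqref{43} by combining four multiplier identities in the spirit of the damped-wave approach of Hsiao--Liu and Nishihara--Yang, adapted to the chemotactic coupling via the positive-definiteness condition \eqref{23}$_2$.

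\emph{Building the coercive energy.} First, multiply \eqref{20}$_1$ by $\varphi$ and \eqref{20}$_2$ by $\tfrac{\mu}{a}\bar\rho\zeta$ and integrate over $\mathbb{R}^+$. The boundary conditions $\varphi(0,t)=0$ and $\zeta_x(0,t)=0$ (cf.\ \eqref{13.19}) kill all boundary terms from the integrations by parts in $-\int(p'(\bar\rho)\varphi_x)_x\varphi$, in the chemotaxis term $\int\mu\bar\rho\zeta_x\cdot\varphi$, and in $-\int D\bar\rho\zeta_{xx}\zeta$. Each of these two identities contributes a cross term $-\mu\!\int\bar\rho\varphi_x\zeta$, so that summing reproduces the quadratic form $p'(\bar\rho)\varphi_x^2-2\mu\bar\rho\varphi_x\zeta+\tfrac{b\mu}{a}\bar\rho\zeta^2$ of Remark \ref{remark32}; invoking \eqref{positivedefine1} then yields dissipation $\gtrsim\|\varphi_x\|^2+\|\zeta\|^2+\|\zeta_x\|^2$ together with pointwise control of $\|\varphi\|^2+\|\zeta\|^2$, up to a spurious term $+C\|\varphi_t\|^2$ arising from $\int\varphi_{tt}\varphi=\tfrac{d}{dt}\!\int\!\varphi\varphi_t-\|\varphi_t\|^2$. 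To kill this $\|\varphi_t\|^2$, next multiply \eqref{20}$_1$ by $\varphi_t$; since $\varphi(0,t)=0$ forces $\varphi_t(0,t)=0$, the integration by parts of $-\int(p'(\bar\rho)\varphi_x)_x\varphi_t$ is clean and produces $\tfrac{d}{dt}\!\bigl(\tfrac12\|\varphi_t\|^2+\tfrac12\!\int p'(\bar\rho)\varphi_x^2\bigr)+\alpha\|\varphi_t\|^2$. Form the Lyapunov functional $\mathcal{L}:=\tfrac12\|\varphi_t\|^2+\tfrac12\!\int p'(\bar\rho)\varphi_x^2+\kappa\bigl[\tfrac{\alpha}{2}\|\varphi\|^2+\int\varphi\varphi_t+\tfrac{\mu}{2a}\!\int\bar\rho\zeta^2\bigr]$ with $\kappa\in(0,\alpha)$ chosen small; Cauchy--Schwarz shows $\mathcal{L}\sim\|\varphi\|^2+\|\varphi_x\|^2+\|\varphi_t\|^2+\|\zeta\|^2$, and the stray $\kappa\|\varphi_t\|^2$ is absorbed into $\alpha\|\varphi_t\|^2$. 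For pointwise control of $\|\zeta_x\|^2$, multiply \eqref{20}$_2$ by $-\zeta_{xx}$ (again $\zeta_x(0,t)=0$) to get $\tfrac12\tfrac{d}{dt}\|\zeta_x\|^2+D\|\zeta_{xx}\|^2+b\|\zeta_x\|^2$, absorbing $a\!\int\varphi_x\zeta_{xx}$ by Young; and multiply by $\zeta_t$ to obtain $\|\zeta_t\|^2+\tfrac{d}{dt}\bigl(\tfrac{D}{2}\|\zeta_x\|^2+\tfrac{b}{2}\|\zeta\|^2\bigr)\lesssim\|\varphi_x\|^2+\|g\|^2$.

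\emph{Time-weighting and error bookkeeping.} Summing the four identities with small positive weights produces a differential inequality $\tfrac{d}{dt}\widetilde{\mathcal{L}}+c\widetilde{\mathcal{D}}\le\mathcal{R}$ where $\widetilde{\mathcal{L}}\sim\|\varphi\|^2+\|[\varphi_x,\varphi_t,\zeta,\zeta_x]\|^2$ and $\widetilde{\mathcal{D}}\sim\|[\varphi_x,\varphi_t,\zeta,\zeta_x]\|^2+\|\zeta_t\|^2$. Multiplying by $(1+t)$ and using $\tfrac{d}{dt}[(1+t)\widetilde{\mathcal{L}}]=\widetilde{\mathcal{L}}+(1+t)\widetilde{\mathcal{L}}'$, the extra $\widetilde{\mathcal{L}}$ on the right is reabsorbed by integrating the un-weighted version already proved. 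The residual $\mathcal{R}$ splits into three pieces: (i) forcing from $\bar m_t,\bar\phi_t,\bar\phi_{xx}$ and $\tfrac{1}{\alpha}q(\bar\rho)_t$ hidden in $f,g$, bounded via Lemma \ref{l3.1} by $C|\delta_0|(1+t)^{-s}$ with $s>1$, whose weighted time integral contributes $C|\delta_0|\le C\delta$; (ii) terms involving the correction functions $\hat\rho,\hat m,\hat\phi$ and their derivatives, which by Lemma \ref{l3.2} carry a factor $\varepsilon_0^{1/2}\mathrm{e}^{-c\tau}$ and, after Cauchy, generate precisely the $\int_0^t\mathrm{e}^{-c\tau}(\|\varphi\|_2^2+\|\varphi_\tau\|_1^2)\,\mathrm{d}\tau$ remainder on the right of \eqref{43}; (iii) genuine nonlinearities from $h,f$ and $\mu\varphi_x\zeta_x,\mu\varphi_x\bar\phi_x,\mu\varphi_x\hat\phi_x$, which are quadratic or cubic in the perturbation and, by \eqref{13.18}, carry a prefactor $\varepsilon$ that absorbs them into $c\widetilde{\mathcal{D}}$. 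Initial data contribute $C(\|\varphi_0\|_1^2+\|\psi_0\|^2+\|\zeta_0\|_1^2)$, yielding \eqref{43}.

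\emph{Main obstacles.} The hardest step is the chemotactic coupling $-\mu\bar\rho\zeta_x$ in \eqref{20}$_1$: this is not small and cannot be controlled by Young's inequality alone. Coercivity is preserved only because the choice of multiplier $\tfrac{\mu}{a}\bar\rho\zeta$ on \eqref{20}$_2$ produces a matching $-\mu\!\int\bar\rho\varphi_x\zeta$, so the two cross terms symmetrize into exactly $-2\mu\bar\rho\varphi_x\zeta$ and assumption \eqref{23}$_2$ (Remark \ref{remark32}) applies. A secondary subtlety is that the multiplier $\tfrac{\mu}{a}\bar\rho\zeta$ depends on $x,t$, so integrations by parts generate harmless but nontrivial lower-order terms proportional to $\bar\rho_x$ and $\bar\rho_t$ (e.g.\ $\tfrac{\mu D}{a}\!\int\bar\rho_x\zeta\zeta_x$ and $\tfrac{\mu}{2a}\!\int\bar\rho_t\zeta^2$), which must be absorbed using the $(1+t)^{-1/2}$ and $(1+t)^{-1}$ decay from Lemma \ref{l3.1} together with smallness of $|\delta_0|$; this is precisely where the parameter $\delta$ on the right of \eqref{43} is used.
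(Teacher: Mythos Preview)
Your treatment of the $\varphi$-multiplier identity (step~1) is correct and matches the paper's identity \eqref{33}. The gap is in step~2: when you multiply \eqref{20}$_1$ by $\varphi_t$, the chemotactic term $-\mu\bar\rho\zeta_x$ on the right contributes $-\mu\int_{\mathbb{R}^+}\bar\rho\zeta_x\varphi_t\,{\rm d}x$, which you have simply omitted from the identity you wrote down. This term carries no small parameter ($\mu\bar\rho$ is $O(1)$), so it cannot be absorbed by Young's inequality into $\alpha\|\varphi_t\|^2$ and the available $\|\zeta_x\|^2$ dissipation from steps~1 and~3 without forcing an extra relation among $\mu,\alpha,D,a,b$ that is nowhere in the hypotheses. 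The paper's remedy is exactly parallel to what you correctly did at the level of the $\varphi$ multiplier: one \emph{pairs} the $\varphi_t$ identity with the multiplier $\tfrac{\mu}{a}\bar\rho\zeta_t$ on \eqref{20}$_2$ (see \eqref{36}--\eqref{37}). After rewriting $-\mu\bar\rho\zeta_x=-\mu(\bar\rho\zeta)_x+\mu\bar\rho_x\zeta$ and integrating by parts in $x$ and $t$, the bad piece becomes $+\mu\int\bar\rho\zeta_t\varphi_x$, which cancels exactly against the $-\mu\int\bar\rho\varphi_x\zeta_t$ produced by the paired parabolic identity; moreover the coercive quadratic form $p'(\bar\rho)\varphi_x^2-2\mu\bar\rho\varphi_x\zeta+\tfrac{b\mu}{a}\bar\rho\zeta^2$ now sits inside the \emph{energy} (cf.\ \eqref{13.43}), not only in the dissipation. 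Your separate steps~3 and~4 (multipliers $-\zeta_{xx}$ and $\zeta_t$) cannot replace this cancellation, because your multiplier $\zeta_t$ is not weighted by $\bar\rho$ and hence does not match the cross term.

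A secondary problem concerns the time weighting. Your $\widetilde{\mathcal L}$ contains $\|\varphi\|^2$; multiplying the whole package by $(1+t)$ and integrating produces $\int_0^t\|\varphi(\tau)\|^2\,{\rm d}\tau$ on the right, and this is not furnished by the unweighted dissipation $\widetilde{\mathcal D}$. The paper avoids this by applying the weight $(1+t)$ \emph{only} to the paired $(\varphi_t,\tfrac{\mu}{a}\bar\rho\zeta_t)$ identity \eqref{13.43}, whose energy involves $\|[\varphi_x,\varphi_t,\zeta,\zeta_x]\|^2$ but not $\|\varphi\|^2$; the lower-order term $\int_0^t\|[\varphi_x,\varphi_\tau,\zeta,\zeta_x]\|^2\,{\rm d}\tau$ that then appears is controlled by the already established unweighted estimate \eqref{13.44}. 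This is precisely why \eqref{43} carries no time weight on $\|\varphi(t)\|^2$.
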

	\begin{proof}
		By integration by parts, we can rewrite \eqref{20} as 
		\begin{equation}\label{29}
			\left\{
			\begin{array}{lr}
				\varphi_{tt}-(p^{\prime}(\bar\rho)\varphi_x)_x+\alpha \varphi_t+\mu(\bar\rho\zeta)_x\\[2mm]
				=-\mu \varphi_x\zeta_x-\mu \varphi_x\bar\phi_x-\mu \varphi_x\hat\phi_x+\mu\bar\rho_x\zeta-\mu\bar\rho\hat\phi_x-\mu\hat\rho\zeta_x-\mu\hat\rho\bar\phi_x-\mu\hat\rho\hat\phi_x-h_x-f_x,\\[2mm]
				\zeta_t-D\zeta_{xx}-a\varphi_x+b\zeta=g.
			\end{array}
			\right.
		\end{equation}
		By calculating $\int_{\mathbb{R}^+}\left(\eqref{29}_1\times\varphi\right){\rm d}x$, and using $\varphi_{tt}\varphi=(\varphi_t\varphi)_t-\varphi_t^2$, $\bar\phi=\frac{a}{b}\bar\rho$ and \eqref{13.19},  we arrive at
\begin{align}\label{31}
			&\;\;\;\;\;	\frac{{\rm d}}{{\rm d}t}\left\{\frac{\alpha}{2}\int_{\mathbb{R}^+}\varphi^2{\rm d}x+\int_{\mathbb{R}^+}^{}\varphi_t\varphi {\rm d}x\right\}+\int_{\mathbb{R}^+}^{}p^{\prime}(\bar\rho)\varphi_x^2 {\rm d}x-\mu\int_{\mathbb{R}^+}^{}\bar\rho\zeta\varphi_x {\rm d}x\notag\\
			&=\int_{\mathbb{R}^+}\varphi_t^2{\rm d}x-\mu\int_{\mathbb{R}^+}^{}\varphi_x\zeta_x\varphi {\rm d}x-\frac{\mu}{b}\int_{\mathbb{R}^+}^{}[\bar\rho_x(a\varphi_x-b\zeta)]\varphi{\rm d}x-\mu\int_{\mathbb{R}^+}^{}\varphi_x\hat\phi_x\varphi{\rm d}x\notag\\
			&\;\;\;\;-\mu\int_{\mathbb{R}^+}^{}\hat\rho\zeta_x\varphi{\rm d}x	-\mu\int_{\mathbb{R}^+}^{}\bar\rho\hat\phi_x\varphi{\rm d}x-\mu\int_{\mathbb{R}^+}\hat\rho\bar\phi_x\varphi{\rm d}x	-\mu\int_{\mathbb{R}^+}\hat\rho\hat\phi_x\varphi{\rm d}x\notag\\
			&\;\;\;\;
		-\int_{\mathbb{R}^+}^{}f_x\varphi{\rm d}x-\int_{\mathbb{R}^+}h_x\varphi{\rm d}x.
		\end{align}
		Then by calculating $\int_{\mathbb{R}^+}\left(\eqref{29}_2\times\left(\frac{\mu}{a}\bar\rho\zeta\right)\right){\rm d}x$ and using \eqref{13.19}, one has
		\begin{align}\label{32}
			&\;\;\;\;\frac{\mu}{2a}	\frac{{\rm d}}{{\rm d}t}\int_{\mathbb{R}^+}^{}\bar\rho\zeta^2{\rm d}x+\frac{\mu D}{a}\int_{\mathbb{R}^+}^{}\bar\rho\zeta_x^2{\rm d}x+\frac{b\mu}{a}\int_{\mathbb{R}^+}^{}\bar\rho\zeta^2{\rm d}x-\mu\int_{\mathbb{R}^+}^{}\bar\rho\varphi_x\zeta {\rm d}x\notag\\
			&=\frac{\mu}{2a}\int_{\mathbb{R}^+}^{}\bar\rho_t\zeta^2{\rm d}x-\frac{\mu D}{a}\int_{\mathbb{R}^+}^{}\bar\rho_x\zeta_x\zeta {\rm d}x+\frac{\mu}{a}\int_{\mathbb{R}^+}^{}\bar\rho\zeta g {\rm d}x.
		\end{align}
		Adding up \eqref{31} and \eqref{32} shows
		\begin{align}\label{33}
			&\;\;\;\;\frac{{\rm d}}{{\rm d}t}\left\{\frac{\alpha}{2}\int_{\mathbb{R}^+}\varphi^2{\rm d}x+\int_{\mathbb{R}^+}^{}\varphi_t\varphi{\rm d}x+\frac{\mu}{2a}\int_{\mathbb{R}^+}^{}\bar\rho\zeta^2{\rm d}x\right\}+\frac{\mu D}{a}\int_{\mathbb{R}^+}^{}\bar\rho\zeta_x^2{\rm d}x	\notag\\
			&\;\;\;\;+\int_{\mathbb{R}^+}^{}p^{\prime}(\bar\rho)\varphi_x^2{\rm d}x-2\mu\int_{\mathbb{R}^+}^{}\bar\rho\zeta\varphi_x{\rm d}x+\frac{b\mu}{a}\int_{\mathbb{R}^+}^{}\bar\rho\zeta^2{\rm d}x\notag\\
			&=\int_{\mathbb{R}^+}\varphi_t^2{\rm d}x-\mu\int_{\mathbb{R}^+}^{}\varphi_x\zeta_x\varphi{\rm d}x-\mu\int_{\mathbb{R}^+}^{}\varphi_x\hat\phi_x\varphi{\rm d}x-\mu\int_{\mathbb{R}^+}^{}\hat\rho\zeta_x\varphi{\rm d}x\notag\\
			&\;\;\;\;-\mu\int_{\mathbb{R}^+}\bar\rho\hat\phi_x\varphi{\rm d}x-\mu\int_{\mathbb{R}^+}\hat\rho\bar\phi_x\varphi{\rm d}x-\mu\int_{\mathbb{R}^+}\hat\rho\hat\phi_x\varphi{\rm d}x+\frac{\mu}{2a}\int_{\mathbb{R}^+}^{}\bar\rho_t\zeta^2{\rm d}x\notag\\
			&\;\;\;\;-\frac{\mu D}{a}\int_{\mathbb{R}^+}^{}\bar\rho_x\zeta_x\zeta {\rm d}x+\frac{\mu}{a}\int_{\mathbb{R}^+}^{}\bar\rho\zeta g{\rm d}x+\int_{\mathbb{R}^+}^{}f\varphi_x{\rm d}x-\int_{\mathbb{R}^+}^{}h_x\varphi{\rm d}x\notag\\
			&\;\;\;\;-\frac{\mu}{b}\int_{\mathbb{R}^+}[\bar\rho_x(a\varphi_x-b\zeta)]\varphi{\rm d}x:=\int_{\mathbb{R}^+}\varphi_t^2{\rm d}x+\sum_{i=1}^{12}I_i.
		\end{align}
		Next, we estimate the right side of \eqref{33} one by one.
		From \eqref{13.18} and the Cauchy inequality, we have
		\begin{equation}\label{I1}
			I_1
			\lesssim\|\varphi\|_{L^{\infty}}\int_{\mathbb{R}^+}|\varphi_x\zeta_x|{\rm d}x
			\lesssim\varepsilon\left(\|\varphi_x\|^2+\|\zeta_x\|^2\right).
		\end{equation}
		By applying Lemma \ref{l3.2} and  \eqref{13.18}, we obatin
		\begin{equation}\label{I26}
			\sum_{i=2}^{6}I_i\lesssim \|\hat\phi_x\|_{L^{\infty}}\|\varphi\|_1+\|\hat\rho\|_{L^{\infty}}\|\zeta_x\|\|\varphi\|+\|\hat\phi_x\|\|\varphi\|+\|\hat\rho\|_{L^{\infty}}\|\bar\phi_x\|\|\varphi\|+\|\hat\rho\|_{L^{\infty}}\|\hat\phi_x\|\|\varphi\|\lesssim \delta{\rm e}^{-ct}.
		\end{equation}
		Recalling the definition of $f$ and $g$ in \eqref{22}, and using Lemmas \ref{l3.1}--\ref{l3.2}, Taylor's formula and the Cauchy inequality give
		\begin{align}\label{I789}
			I_{7}+I_{8}+I_9&\lesssim\|\bar\rho_t\|_{L^{\infty}}\|\zeta\|^2+\|\bar\rho_x\|_{L^{\infty}}\|\zeta_x\|\|\zeta\|+\|\zeta\|\|g\|\notag\\
			 &\lesssim \delta\left(\|\zeta\|^2+\|\zeta_x\|^2\right)+\frac{1}{\delta}\|g\|^2\notag\\&\lesssim\delta\left(\|\zeta\|^2+\|\zeta_x\|^2\right)+\delta(1+t)^{-\frac{3}{2}},
		\end{align}
		\begin{equation}\label{I10}
			I_{10}\lesssim \int_{\mathbb{R^+}}|\varphi_x|(|\bar\rho_t|+|\varphi_x|^2+|\hat \rho|){\rm d}x\lesssim(\varepsilon+\delta)\|\varphi_x\|^2+\delta(1+t)^{-\frac{3}{2}}.
		\end{equation}
		For  $I_{11}$, notice that
		\begin{align}\label{hx}
			h_x&=-\left(\frac{\left(\varphi_t+\frac{1}{\alpha}q(\bar\rho)_x-\hat m\right)^2}{\varphi_x+\bar\rho+\hat\rho}\right)_x\nonumber\\
			&=-\frac{2\left(\varphi_t+\frac{1}{\alpha}q(\bar\rho)_x-\hat m\right)}{\varphi_x+\bar\rho+\hat\rho}\left(\varphi_{xt}+\frac{1}{\alpha}q(\bar\rho)_{xx}-\hat m_x\right)+\frac{\left(\varphi_t+\frac{1}{\alpha}q(\bar\rho)_x-\hat m\right)^2}{\left(\varphi_x+\bar\rho+\hat\rho\right)^2}\left(\varphi_{xx}+\bar\rho_{x}+\hat\rho_x\right),
		\end{align}
		then it follows from Lemmas \ref{l3.1}--\ref{l3.2} and \eqref{13.15} that
		\begin{align}\label{I11}
			I_{11}\lesssim&
			\int_{\mathbb{R}^+}\left(|\varphi_t|+|\bar\rho_x|+|\hat m|\right)\left(|\varphi_{xt}|+|\bar\rho_{xx}|+|\bar\rho_x|^2+|\hat m_x|^2\right)|\varphi|{\rm d}x\notag\\
			&+\int_{\mathbb{R}^+}\left(|\varphi_t|^2+|\bar\rho_x|^2+|\hat m|^2\right)\left(|\varphi_{xx}|+|\bar\rho_{x}|+|\hat\rho_x|\right)|\varphi|{\rm d}x\notag\\
			\lesssim&(\varepsilon+\delta)\left(\|\varphi_x\|_1^2+\|\varphi_t\|_1^2\right)+{\rm e}^{-ct}\left(\|\varphi\|^2+\|\varphi_{xx}\|^2+\|\varphi_{xt}\|^2\right) +\delta(1+t)^{-\frac{5}{4}}.
		\end{align}
		For $I_{12}$, we can similarly prove that
		\begin{align}\label{I12}
			I_{12}&=-\frac{\mu}{b}\int_{\mathbb{R}^+}\bar\rho_x(a\varphi_x-b\zeta)\varphi{\rm d}x=-\frac{\mu}{b}\int_{\mathbb{R}^+}\bar\rho_x(\zeta_t-D\zeta_{xx}-g)\varphi{\rm d}x\notag\\
			&=-\frac{\mu}{b}	\frac{{\rm d}}{{\rm d}t}\int_{\mathbb{R}^+}\bar\rho_x\zeta \varphi{\rm d}x+\frac{\mu}{b}\int_{\mathbb{R}^+}\bar\rho_{xt}\zeta \varphi{\rm d}x+\frac{\mu}{b}\int_{\mathbb{R}^+}\bar\rho_x\zeta \varphi_t{\rm d}x\notag\\
			&\;\;\;\;-\frac{\mu D}{b}\int_{\mathbb{R}^+}\bar\rho_{xx}\zeta_x\varphi{\rm d}x-\frac{\mu D}{b}\int_{\mathbb{R}^+}\bar\rho_x\zeta_x\varphi_x{\rm d}x+\frac{\mu}{b}\int_{\mathbb{R}^+}\bar\rho_x(-\bar\phi_t+D\bar\phi_{xx}+D\hat\phi_{xx})\varphi{\rm d}x\notag\\
			&\leq -\frac{\mu}{b}	\frac{{\rm d}}{{\rm d}t}\int_{\mathbb{R}^+}\bar\rho_x\zeta \varphi{\rm d}x+C\|\varphi\|_{L^{\infty}}\|\bar\rho_{xt}\|\|\zeta\|+C\|\bar\rho_x\|_{L^{\infty}}\|\zeta\|\|\varphi_t\|\notag\\	
			&\;\;\;\;+C\|\varphi\|_{L^{\infty}}\|\bar\rho_{xx}\|\|\zeta_x\|+C\|\bar\rho_x\|_{L^{\infty}}\|\zeta_x\|\|\varphi_x\|\notag\\
			&\;\;\;\;+C\|\bar\rho_x\|_{L^{\infty}}\|\varphi\|\left(\|\bar\phi_t\|+\|\bar\phi_{xx}\|+\|\hat\phi_{xx}\|\right)\notag\\
			&\leq  -\frac{\mu}{b} 	\frac{{\rm d}}{{\rm d}t} \int_{\mathbb{R}^+}\bar\rho_x\zeta \varphi{\rm d}x+C(\varepsilon+\delta)\left(\|\varphi_x\|^2+\|\varphi_t\|^2+\|\zeta\|_1^2\right) +C\delta(1+t)^{-\frac{5}{4}}.
		\end{align}
		Substituting  \eqref{I1}--\eqref{I12}into \eqref{33}, we obtain
		\begin{align}\label{34}
			&\;\;\;\;	\frac{{\rm d}}{{\rm d}t}\left\{\frac{\alpha}{2}\int_{\mathbb{R}^+}\varphi^2{\rm d}x+\int_{\mathbb{R}^+}\varphi_t\varphi{\rm d}x+\frac{\mu}{2a}\int_{\mathbb{R}^+}\bar\rho\zeta^2{\rm d}x\right\}+\frac{\mu D}{a}\int_{\mathbb{R}^+}\bar\rho\zeta_x^2{\rm d}x\notag\\
			&\;\;\;\;+\int_{\mathbb{R}^+}p^{\prime}(\bar\rho)\varphi_x^2-2\mu\int_{\mathbb{R}^+}\bar\rho\zeta\varphi_x{\rm d}x+\frac{b\mu}{a}\int_{\mathbb{R}^+}\bar\rho\zeta^2{\rm d}x\notag\\
			&\leq -\frac{\mu}{b}	\frac{{\rm d}}{{\rm d}t}\int_{\mathbb{R}^+}\bar\rho_x\zeta \varphi{\rm d}x+\|\varphi_t\|_2^2+C(\varepsilon+\delta)\left(\|\varphi_x\|_1^2+\|\varphi_t\|_1^2+\|\zeta\|_1^2\right)+C\delta(1+t)^{-\frac{5}{4}}\notag\\
			&\;\;\;\;+C{\rm e}^{-ct}\left(\|\varphi\|_2^2+\|\varphi_{xt}\|^2\right).
		\end{align}
		Next, by calculating $\int_{\mathbb{R}^+}\left(\eqref{29}_1\times\varphi_t\right){\rm d}x$, we have
		\begin{align}\label{36}
			&\;\;\;\;	\frac{{\rm d}}{{\rm d}t}\left\{\frac{1}{2}\int_{\mathbb{R}^+}\varphi_t^2{\rm d}x+\frac{1}{2}\int_{\mathbb{R}^+}p^{\prime}(\bar\rho)\varphi_x^2{\rm d}x-\mu\int_{\mathbb{R}^+}\bar\rho\zeta\varphi_x{\rm d}x\right\}+\alpha\int_{\mathbb{R}^+}\varphi_t^2{\rm d}x+\mu\int_{\mathbb{R}^+}\bar\rho\zeta_t\varphi_x{\rm d}x\notag\\
			&=\frac{1}{2}\int_{\mathbb{R}^+}p^{\prime\prime}(\bar\rho)\bar\rho_t\varphi_x^2{\rm d}x-\mu\int_{\mathbb{R}^+}\bar\rho_t\zeta\varphi_x{\rm d}x-\mu\int_{\mathbb{R}^+}\varphi_x\zeta_x\varphi_t{\rm d}x-\frac{\mu}{b}\int_{\mathbb{R}^+}\bar\rho_x(a\varphi_x-b\zeta)\varphi_t{\rm d}x\notag\\
			&\;\;\;\;-\mu\int_{\mathbb{R}^+}\varphi_x\hat\phi_x\varphi_t{\rm d}x-\mu\int_{\mathbb{R}^+}\hat\rho\zeta_x\varphi_t{\rm d}x-\mu\int_{\mathbb{R}^+}\bar\rho\hat\phi_x\varphi_t{\rm d}x\notag\\
			&\;\;\;\;-\mu\int_{\mathbb{R}^+}\hat\rho\bar\phi_x\varphi_t{\rm d}x-\mu\int_{\mathbb{R}^+}\hat\rho\hat\phi_x\varphi_t{\rm d}x-\int_{\mathbb{R}^+}f_x\varphi_t{\rm d}x-\int_{\mathbb{R}^+}h_x\varphi_t{\rm d}x,
		\end{align}
		where we used 
		\begin{align}\notag
		\int_{\mathbb{R}^+}(\bar\rho\zeta)_x\varphi_t{\rm d}x=-\int_{\mathbb{R}^+}\bar\rho\zeta\varphi_{xt}{\rm d}x=-\frac{{\rm d}}{{\rm d}t}\int_{\mathbb{R}^+}\bar\rho\zeta\varphi_{x}{\rm d}x+\int_{\mathbb{R}^+}\bar\rho\zeta_t\varphi_x{\rm d}x+\int_{\mathbb{R}^+}\bar\rho_t\zeta\varphi_x{\rm d}x.	
		\end{align}
		Then we calculate
		$\int_{\mathbb{R}^+}\left(\eqref{29}_2\times\left(\frac{\mu}{a}\bar\rho\zeta_t\right)\right){\rm d}x$ to give
		\begin{align}\label{37}
			&\;\;\;\;\;	\frac{{\rm d}}{{\rm d}t}\left\{\frac{\mu b}{2a}\int_{\mathbb{R}^+}^{}\bar\rho\zeta^2{\rm d}x+\frac{\mu D}{2a}\int_{\mathbb{R}^+}^{}\bar\rho\zeta_x^2{\rm d}x\right\}
			+\frac{\mu}{a}\int_{\mathbb{R}^+}^{}\bar\rho\zeta_t^2{\rm d}x-\mu\int_{\mathbb{R}^+}^{}\bar\rho\varphi_x\zeta_t {\rm d}x\notag\\
			&=\frac{\mu b}{2a}\int_{\mathbb{R}^+}^{}\bar\rho_t\zeta^2{\rm d}x+\frac{\mu D}{2a}\int_{\mathbb{R}^+}^{}\bar\rho_t\zeta_x^2{\rm d}x
			-\frac{\mu D}{a}\int_{\mathbb{R}^+}^{}\bar\rho_x\zeta_x\zeta_t {\rm d}x+\frac{\mu}{a}\int_{\mathbb{R}^+}^{}\bar\rho\zeta_tg {\rm d}x.
		\end{align}
		Adding \eqref{36} and \eqref{37}, and noticing that $\mu\int_{\mathbb{R}^+}^{}\bar\rho\varphi_x\zeta_t {\rm d}x$ is deleted, we have
		\begin{align}\label{38}
			&\;\;\;\;	\frac{{\rm d}}{{\rm d}t}\left\{\frac{1}{2}\int_{\mathbb{R}^+}\varphi_t^2{\rm d}x	+\frac{\mu D}{2a}\int_{\mathbb{R}^+}^{}\bar\rho\zeta_x^2{\rm d}x\right\}+\alpha\int_{\mathbb{R}^+}\varphi_t^2{\rm d}x+\frac{\mu}{a}\int_{\mathbb{R}^+}^{}\bar\rho\zeta_t^2{\rm d}x\notag\\
			&\;\;\;\;+\frac{1}{2}	\frac{{\rm d}}{{\rm d}t}\left\{\int_{\mathbb{R}^+}p^{\prime}(\bar\rho)\varphi_x^2{\rm d}x-2\mu\int_{\mathbb{R}^+}\bar\rho\zeta\varphi_x{\rm d}x+\frac{\mu b}{a}\int_{\mathbb{R}^+}^{}\bar\rho\zeta^2{\rm d}x\right\}\notag\\
			&=\frac{1}{2}\int_{\mathbb{R}^+}p^{\prime\prime}(\bar\rho)\bar\rho_t\varphi_x^2{\rm d}x-\mu\int_{\mathbb{R}^+}\bar\rho_t\zeta\varphi_x{\rm d}x+\frac{\mu b}{2a}\int_{\mathbb{R}^+}^{}\bar\rho_t\zeta^2{\rm d}x+\frac{\mu D}{2a}\int_{\mathbb{R}^+}^{}\bar\rho_t\zeta_x^2{\rm d}x
			-\frac{\mu D}{a}\int_{\mathbb{R}^+}^{}\bar\rho_x\zeta_x\zeta_t {\rm d}x\notag\\
			&\;\;\;\;-\frac{\mu}{b}\int_{\mathbb{R}^+}[\bar\rho_x(a\varphi_x-b\zeta)]\varphi_t{\rm d}x-\mu\int_{\mathbb{R}^+}\varphi_x\zeta_x\varphi_t{\rm d}x-\mu\int_{\mathbb{R}^+}\varphi_x\hat\phi_x\varphi_t{\rm d}x\notag\\
			&\;\;\;\;	-\mu\int_{\mathbb{R}^+}\hat\rho\zeta_x\varphi_t{\rm d}x-\mu\int_{\mathbb{R}^+}\bar\rho\hat\phi_x\varphi_t{\rm d}x-\mu\int_{\mathbb{R}^+}\hat\rho\bar\phi_x\varphi_t{\rm d}x-\mu\int_{\mathbb{R}^+}\hat\rho\hat\phi_x\varphi_t{\rm d}x\notag\\
			&\;\;\;\;+\frac{\mu}{a}\int_{\mathbb{R}^+}^{}\bar\rho\zeta_tg {\rm d}x-\int_{\mathbb{R}^+}f_x\varphi_t{\rm d}x-\int_{\mathbb{R}^+}h_x\varphi_t{\rm d}x:=\sum_{i=13}^{27}I_i.
		\end{align}
		Using Lemmas \ref{l3.1}--\ref{l3.2}, \eqref{13.15}, \eqref{13.18} and Young inequality, we can deduce
		\begin{align}\label{J18}
			\sum_{i=13}^{18}I_i\lesssim&
			\left(\|\bar\rho_t\|_{L^{\infty}}+\frac{1}{\delta}\|\bar\rho_x\|_{L^{\infty}}^2\right)\left(\|\varphi_x\|^2+\|\zeta\|^2+\|\zeta_x\|^2\right)
			+\delta\left(\|\varphi_t\|^2+\|\zeta_t\|^2\right)
			\notag\\
			\lesssim&\delta(1+t)^{-1}\left(\|\varphi_x\|^2+\|\zeta\|^2+\|\zeta_x\|^2\right)
			+\delta\left(\|\varphi_t\|^2+\|\zeta_t\|^2\right),
		\end{align}
		\begin{align}\label{K9}
			I_{19}\leq \frac{\alpha}{8}\|\varphi_t\|^2+C\|\zeta_x\|_{L^{\infty}}^2\|\varphi_x\|^2\leq \frac{\alpha}{8}\|\varphi_t\|^2+C\varepsilon^2(1+t)^{-2}\|\varphi_x\|^2.
		\end{align}
		By Lemmas \ref{l3.1}--\ref{l3.2} and \eqref{13.15}, we can also prove
		\begin{align}
			\sum_{i=20}^{24}I_i\leq C\delta{\rm e}^{-ct}.
		\end{align}
		Recalling
		the definition of $g$ in \eqref{22}, and applying Lemmas \ref{l3.1}--\ref{l3.2} lead to
		\begin{align}\label{K15}
			I_{25}
			&=\frac{\mu}{a}\frac{{\rm d}}{{\rm d}t}\int_{\mathbb{R}^+}\bar\rho\zeta g{\rm d}x-\frac{\mu}{a}\int_{\mathbb{R}^+}\bar\rho_t\zeta g{\rm d}x-\frac{\mu}{a}\int_{\mathbb{R}^+}\bar\rho\zeta g_t{\rm d}x\notag\\
			&\leq \frac{\mu}{a}\frac{{\rm d}}{{\rm d}t}\int_{\mathbb{R}^+}\bar\rho\zeta g{\rm d}x+C\delta(1+t)^{-1}\|\zeta\|^2
			+C\frac{1}{\delta}(1+t)\int_{\mathbb{R}^+}(\bar\rho^2g_t^2+\bar\rho_t^2g^2){\rm d}x\notag\\
			&\leq \frac{\mu}{a}\frac{{\rm d}}{{\rm d}t}\int_{\mathbb{R}^+}\bar\rho\zeta g{\rm d}x+C\delta(1+t)^{-1}\|\zeta\|^2+C\delta(1+t)^{-\frac{5}{2}}.
		\end{align}
	For $I_{26}$--$I_{27}$, notice that
		\begin{align}\label{fx}
			f_x&=\frac{1}{\alpha}q(\bar\rho)_{xt}-\left(p(\varphi_x+\bar\rho+\hat\rho)-p(\bar\rho)-p^{\prime}(\bar\rho)\varphi_x\right)_x\notag\\
			&=\frac{1}{\alpha}q(\bar\rho)_{xt}-(p^{\prime}(\varphi_x+\bar\rho+\hat\rho)-p^{\prime}(\bar\rho))\varphi_{xx}-p^{\prime}(\varphi_x+\bar\rho+\hat\rho)\hat\rho_x\notag\\
			&\;\;\;\;-(p^{\prime}(\varphi_x+\bar\rho+\hat\rho)-p^{\prime}(\bar\rho)-p^{\prime\prime}(\bar\rho)\varphi_x)\bar\rho_x.
		\end{align}
	Through direct calculation, we can get
		\begin{align}\label{J16}
			I_{26}\leq& -\frac{1}{2}	\frac{{\rm d}}{{\rm d}t}\int_{\mathbb{R}^+}\left(p^{\prime}(\varphi_x+\bar\rho+\hat\rho)-p^{\prime}(\bar\rho)\right)\varphi_x^2{\rm d}x+C(\|\bar\rho_x\|_{L^\infty}\|\bar\rho_t\|+\|\bar\rho_{xt}\|+\|\bar\rho_x\|_{L^\infty}\|\varphi_x\|+\|\hat\rho\|_1)\|\varphi_t\|\notag\\
			&+C(\|\varphi_{xt}\|_{L^\infty}+\|\bar\rho_t\|_{L^\infty}+\|\hat\rho_t\|_{L^\infty})\|\varphi_x\|^2\notag\\ 
			\leq&-\frac{1}{2}	\frac{{\rm d}}{{\rm d}t}\int_{\mathbb{R}^+}\left(p^{\prime}(\varphi_x+\bar\rho+\hat\rho)-p^{\prime}(\bar\rho)\right)\varphi_x^2{\rm d}x+C(\varepsilon+\delta)((1+t)^{-1}\|\varphi_x\|^2+\|\varphi_t\|^2)\notag\\ 
			&+C\delta(1+t)^{-\frac{5}{2}},
		\end{align}
		\begin{align}\label{J17}
			I_{27}
			=&\int_{\mathbb{R}^+}\left[\left(\frac{2(\varphi_t+\frac{1}{\alpha}q(\bar\rho)_x-\hat m)}{(\varphi_x+\bar\rho+\hat\rho)}\right)\left(\varphi_{xt}+\frac{1}{\alpha}q(\bar\rho)_{xx}-\hat m_x\right)\right]\varphi_t{\rm d}x\notag\\
			&-\int_{\mathbb{R}^+}\left[\left(\frac{(\varphi_t+\frac{1}{\alpha}q(\bar\rho)_x-\hat m)^2}{(\varphi_x+\bar\rho+\hat\rho)^2}\right)(\varphi_{xx}+\bar\rho_x+\hat\rho_x	)\right]\varphi_t{\rm d}x\notag\\
			\leq& C(\|\varphi_{xt}+\frac{1}{\alpha}q(\bar\rho)_{xx}-\hat m_x\|_{L^\infty}+\|\varphi_t\|_{L^\infty})(\|\varphi_t\|+\|\bar\rho_x\|)\|\varphi_t\|+C\|\bar\rho_x\|^2_{L^\infty}\|\varphi_t\|(\|\varphi_{xx}\|+\|\bar\rho_{x}\|+\|\hat \rho_x\|)\notag\\
			&+C\|\hat m\|_{L^\infty}\|\varphi_t\|(\|\varphi_{xt}\|+\|q(\bar\rho)_{xx}\|+\|\hat m_x\|)+C\|\hat m\|^2_{L^\infty}\|\varphi_t\|(\|\varphi_{xx}\|+\|\bar\rho_{x}\|+\|\hat \rho_x\|)\notag\\
			\leq& \frac{\alpha}{8}\|\varphi_t\|^2+C{\rm e}^{-ct}(\|\varphi_{x}\|_1^2+\|\varphi_{xt}\|^2)+C\delta(1+t)^{-\frac{5}{2}}.
		\end{align}
	Putting \eqref{J18}--\eqref{J17} into \eqref{38}, and using the smallness of $\varepsilon$ and $\delta$ show	
\begin{align}\label{13.43}
			&\frac{{\rm d}}{{\rm d}t}\left\{\frac{1}{2}\int_{\mathbb{R}^+}\varphi_t^2{\rm d}x	+\frac{\mu D}{2a}\int_{\mathbb{R}^+}^{}\bar\rho\zeta_x^2{\rm d}x\right\}+\frac{\alpha}{2}\int_{\mathbb{R}^+}\varphi_t^2{\rm d}x+\frac{\mu}{2a}\int_{\mathbb{R}^+}^{}\bar\rho\zeta_t^2{\rm d}x\notag\\
			&+\frac{1}{2}	\frac{{\rm d}}{{\rm d}t}\left\{\int_{\mathbb{R}^+}p^{\prime}(\bar\rho)\varphi_x^2{\rm d}x-2\mu\int_{\mathbb{R}^+}\bar\rho\zeta\varphi_x{\rm d}x+\frac{\mu b}{a}\int_{\mathbb{R}^+}^{}\bar\rho\zeta^2{\rm d}x\right\}\notag\\
			\leq &\frac{\mu}{a}\frac{{\rm d}}{{\rm d}t}\int_{\mathbb{R}^+}\bar\rho\zeta g{\rm d}x-\frac{1}{2}	\frac{{\rm d}}{{\rm d}t}\int_{\mathbb{R}^+}\left(p^{\prime}(\varphi_x+\bar\rho+\hat\rho)-p^{\prime}(\bar\rho)\right)\varphi_x^2{\rm d}x+(\varepsilon+\delta)(1+t)^{-1}\left(\|\varphi_x\|^2+\|\zeta\|_1^2\right)\notag\\ 
			&+C{\rm e}^{-ct}(\|\varphi_{x}\|_1^2+\|\varphi_{xt}\|^2)+C\delta(1+t)^{-\frac{5}{2}}.
		\end{align}
Adding up \eqref{34} and \eqref{13.43}, then integrating the resulting inequality with respect to $t$ and using \eqref{23} give
		\begin{align}\label{13.44}
			& \|[\varphi,\varphi_x,\varphi_t,\zeta,\zeta_x](t)\|^2+\int_{0}^{t}\|[\varphi_x,\varphi_\tau,\zeta,\zeta_x,\zeta_{\tau}](\tau)\|^2{\rm d}\tau \notag\\
			&	\leq C\left(\|\varphi_0\|_1^2+\|\psi_0\|^2+\|\zeta_0\|_1^2+\delta\right)+C\int_{0}^{t}{\rm e}^{-c\tau}\left(\|\varphi(\tau)\|_2^2+\|\varphi_{\tau}(\tau)\|_1^2\right){\rm d}\tau.
		\end{align}
Then integrating $(1 + t) \times \eqref{13.43} $ over $[0, t]$ and using \eqref{13.44} yield
		\begin{align}\label{13.45}
			&(1+t)\|[\varphi_x,\varphi_t,\zeta,\zeta_x](t)\|^2+\int_{0}^{t}(1+\tau)\|[\varphi_{\tau},\zeta_{\tau}](\tau)\|^2{\rm d}\tau\notag\\ 
			&\leq C\left(\|\varphi_0\|_1^2+\|\psi_0\|^2+\|\zeta_0\|_1^2+\delta\right)+C\int_{0}^{t}{\rm e}^{-c\tau}\left(\|\varphi(\tau)\|_2^2+\|\varphi_{\tau}(\tau)\|_1^2\right){\rm d}\tau,
		\end{align}
		where we used 
\begin{align}\notag
	(1+t)\int_{\mathbb{R}^+}|\bar\rho\zeta g|{\rm d}x&\lesssim(1+t)(\delta\|\zeta\|^2+\frac{1}{\delta}\|g\|^2)\lesssim\delta(1+t)\|\zeta\|^2+\delta(1+t)^{-\frac{1}{2}}\notag\\ 
	&\lesssim\delta(1+t)\|\zeta\|^2+\delta.\notag
\end{align}
\eqref{43} follows from \eqref{13.44}--\eqref{13.45}. The proof of this lemma is completed.
		\end{proof}
	\begin{lemma}\label{l3.4}
		Under the hypothesis of Proposition \ref{p3.1}, it holds that
		\begin{align}\label{13.46}
			&\;\;\;\;(1+t)^2\|[\varphi_{xt},\varphi_{xx},\zeta_x,\zeta_{xx}](t)\|^2+\int_{0}^{t}\left\{(1+\tau)^2\|[\varphi_{x\tau},\zeta_{x\tau}](\tau)\|^2+(1+\tau)\|[\varphi_{xx},\zeta_x,\zeta_{xx}](\tau)\|^2\right\}{\rm d}\tau\notag\\
			&\leq C(\|\varphi_0\|_3^2+\|\psi_0\|_2^2+\|\zeta_0\|_3^2+\delta)+C\int_{0}^{t}{\rm e}^{-c\tau}\left(\|\varphi(\tau)\|_3^2+\|\varphi_{\tau}(\tau)\|_1^2\right){\rm d}\tau.
		\end{align}
	\end{lemma}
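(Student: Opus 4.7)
The plan is to reproduce the energy argument of Lemma~\ref{l3.3} at one higher derivative order. I will differentiate the reformulated system \eqref{29} once in $x$, producing a wave-type equation for $\varphi_x$ and a parabolic equation for $\zeta_x$ with the same principal structure (coupling through $p'(\bar\rho)$ and $\mu\bar\rho$), together with commutator terms and the differentiated sources $h_{xx}$, $f_{xx}$, $g_x$ on the right-hand sides.

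First, I would carry out the analog of the $(\varphi,\zeta)$-multiplier step of Lemma~\ref{l3.3}: compute $\int_{\mathbb{R}^+}(\partial_x\eqref{29}_1)\cdot\varphi_x\,{\rm d}x$ and $\int_{\mathbb{R}^+}(\partial_x\eqref{29}_2)\cdot\frac{\mu}{a}\bar\rho\,\zeta_x\,{\rm d}x$, and sum. After integration by parts, the cross terms involving $\mu\bar\rho\,\varphi_{xx}\zeta_x$ combine, and the positive definiteness from Remark~\ref{remark32} applied to the pair $(\varphi_{xx},\zeta_x)$ produces a coercive bound on $\|\varphi_{xx}\|^2+\|\zeta_x\|^2$ alongside $\|\zeta_{xx}\|^2$ from the parabolic part. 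Next, I would perform the analog of the $(\varphi_t,\zeta_t)$-multiplier step with $\varphi_{xt}$ and $\frac{\mu}{a}\bar\rho\,\zeta_{xt}$, which produces a time-derivative of the higher-order energy controlling $\|[\varphi_{xt},\varphi_{xx},\zeta_x,\zeta_{xx}]\|^2$ and the dissipation $\|[\varphi_{x\tau},\zeta_{x\tau}]\|^2$ in the integrand. Summing the two steps yields a differential inequality structurally parallel to \eqref{13.43}. Boundary contributions are handled using \eqref{13.19} together with $\bar\rho_x(0,t)=0$ and \eqref{13a}: terms such as $(p'(\bar\rho)\varphi_x)_x\varphi_x\big|_{x=0}$, $(\bar\rho\zeta)_x\varphi_x\big|_{x=0}$ and $\zeta_{xx}\zeta_x\big|_{x=0}$ all vanish because in each case at least one factor (namely $\varphi_{xx}(0,t)$, $\bar\rho_x(0,t)$ or $\zeta_x(0,t)$) is zero. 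The nonlinear errors are bounded exactly as in the proof of Lemma~\ref{l3.3}, using the a priori bounds \eqref{13.15}, \eqref{13.18} and Lemmas~\ref{l3.1}--\ref{l3.2}, yielding quadratic terms with small prefactor $\varepsilon+\delta$ and a residual inhomogeneity of order $\delta(1+\tau)^{-5/2}$.

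Once the differential inequality is in hand, a first time integration gives an unweighted higher-order estimate, and then multiplying by $(1+t)$, integrating again, and absorbing the lower-order contributions via \eqref{43} produces the $(1+t)^2$-weighted inequality \eqref{13.46}. The main obstacle is the top-order nonlinearity in $h_{xx}$ and $f_{xx}$, which features $\partial_x^3\varphi$: it can only be controlled by invoking the a priori hypothesis $(1+t)^2\|\partial_x^3\varphi\|^2\leq\varepsilon^2$ from \eqref{13.15}, and this is precisely the reason the residual ${\rm e}^{-c\tau}\|\varphi(\tau)\|_3^2$ persists on the right-hand side of \eqref{13.46}, to be closed only after the corresponding $H^3$-estimate for $\varphi$ is established. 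A secondary technical point is that $\varphi_x(0,t)=(\rho-\bar\rho-\hat\rho)(0,t)$ is not prescribed to vanish, so each integration by parts must be arranged so that the surviving boundary factor is one of the quantities known to vanish at $x=0$.
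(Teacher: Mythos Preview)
Your plan is essentially the paper's own proof: differentiate \eqref{29} once in $x$, test with $(\varphi_x,\tfrac{\mu}{a}\bar\rho\zeta_x)$ and then with $(\varphi_{xt},\tfrac{\mu}{a}\bar\rho\zeta_{xt})$, use \eqref{positivedefine1} on the pair $(\varphi_{xx},\zeta_x)$, handle the boundary terms via \eqref{13.19}, $\bar\rho_x(0,t)=0$ and \eqref{13a}, and bound the nonlinear remainders with \eqref{13.15}, \eqref{13.18} and Lemmas~\ref{l3.1}--\ref{l3.2}. Two small corrections: first, the paper takes $\lambda\cdot\eqref{13.54}+\eqref{13.62}$ with $0<\lambda\ll1$ (because \eqref{13.54} carries $\|\varphi_{xt}\|^2$ with the wrong sign, to be absorbed by the damping in \eqref{13.62}), integrates with weights $(1+t)^0$ and $(1+t)^1$ to reach \eqref{13.63}, and then multiplies \eqref{13.62} \emph{alone} by $(1+t)^2$ to get \eqref{13.64}; a single multiplication by $(1+t)$ as you describe would stop at the $(1+t)^1$-weighted bound. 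Second, the residual $\int_0^t{\rm e}^{-c\tau}\|\varphi(\tau)\|_3^2\,{\rm d}\tau$ is not handled by invoking the a~priori bound on $\partial_x^3\varphi$; it is simply left on the right-hand side (the $\varphi_{xxx}$ contribution only appears with an ${\rm e}^{-ct}$ prefactor coming from $\hat m,\hat\rho$) and is closed by Gronwall at the end of Proposition~\ref{p3.1}.
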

	\begin{proof}
By calculating $\int_{\mathbb{R}^+}\left(\partial_x\eqref{29}_1\times\varphi_x\right){\rm d}x+\int_{\mathbb{R}^+}\left(\partial_x\eqref{29}_2\times\left(\frac{\mu}{a}\bar\rho\zeta_x\right)\right){\rm d}x$, we obtain
\begin{align}\label{13.47}
			&\frac{{\rm d}}{{\rm d}t}\left\{\frac{\alpha}{2}\int_{\mathbb{R}^+}\varphi_x^2{\rm d}x+\int_{\mathbb{R}^+}^{}\varphi_{xt}\varphi_x{\rm d}x+\frac{\mu}{2a}\int_{\mathbb{R}^+}^{}\bar\rho\zeta_x^2{\rm d}x\right\}+\frac{\mu D}{a}\int_{\mathbb{R}^+}^{}\bar\rho\zeta_{xx}^2{\rm d}x	\notag\\
			&+\int_{\mathbb{R}^+}^{}p^{\prime}(\bar\rho)\varphi_{xx}^2{\rm d}x-2\mu\int_{\mathbb{R}^+}^{}\bar\rho\zeta_x\varphi_{xx}{\rm d}x+\frac{b\mu}{a}\int_{\mathbb{R}^+}^{}\bar\rho\zeta_x^2{\rm d}x\notag\\
			=&\int_{\mathbb{R}^+}\varphi_{xt}^2{\rm d}x-\mu\int_{\mathbb{R}^+}^{}\partial_x(\varphi_{x}\zeta_x)\varphi_{x} {\rm d}x-\frac{\mu}{b}\int_{\mathbb{R}^+}^{}\partial_x[\bar\rho_x(a\varphi_{x}-b\zeta)]\varphi_{x}{\rm d}x-\mu\int_{\mathbb{R}^+}^{}\partial_x(\varphi_{x}\hat\phi_x)\varphi_{x}{\rm d}x\notag\\
			&\;\;\;\;-\mu\int_{\mathbb{R}^+}^{}\partial_x(\hat\rho\zeta_x)\varphi_{x}{\rm d}x	-\mu\int_{\mathbb{R}^+}^{}\partial_x(\bar\rho\hat\phi_x)\varphi_{x}{\rm d}x-\mu\int_{\mathbb{R}^+}^{}\partial_x(\hat\rho\bar\phi_x)\varphi_{x}{\rm d}x	-\mu\int_{\mathbb{R}^+}^{}\partial_x(\hat\rho\hat\phi_x)\varphi_{x}{\rm d}x\notag\\
			&
			-\int_{\mathbb{R}^+}^{}f_{xx}\varphi_{x}{\rm d}x-\int_{\mathbb{R}^+}^{}p^{\prime}(\bar\rho)_x\varphi_{x}\varphi_{xx}{\rm d}x
			-\int_{\mathbb{R}^+}^{}h_{xx}\varphi_{x}{\rm d}x
			+\mu\int_{\mathbb{R}^+}^{}\bar\rho_x\zeta\varphi_{xx}{\rm d}x\notag\\ 
			&+\frac{\mu}{2a}\int_{\mathbb{R}^+}^{}\bar\rho_t\zeta_x^2{\rm d}x-\frac{\mu D}{a}\int_{\mathbb{R}^+}^{}\bar\rho_x\zeta_{xx}\zeta_{x} {\rm d}x+\frac{\mu}{a}\int_{\mathbb{R}^+}^{}\bar\rho\zeta_{x}g_x {\rm d}x:=\int_{\mathbb{R}^+}\varphi_{xt}^2{\rm d}x+\sum_{i=1}^{14}J_i.
		\end{align}
By applying Lemmas \ref{l3.1}--\ref{l3.2}, \eqref{13.15}, \eqref{13.18} and Young inequality, we can deduce
\begin{align}\label{13.48}
			J_{1}\lesssim&\int_{\mathbb{R}^+}|\varphi_x\zeta_x\varphi_{xx}|{\rm d}x+\int_{\mathbb{R}^+}|\varphi_x^2\zeta_{xx}|{\rm d}x\notag\\
			\lesssim&\|\varphi_x\|_{L^{\infty}}\int_{\mathbb{R}^+}|\zeta_x\varphi_{xx}|{\rm d}x+\|\varphi_x\|_{L^{\infty}}\int_{\mathbb{R}^+}|\zeta_{xx}\varphi_{x}|{\rm d}x\notag\\
			\lesssim&\varepsilon(\|\varphi_{xx}\|^2+\|\zeta_{xx}\|^2)+\varepsilon(1+t)^{-1}(\|\varphi_x\|^2+\|\zeta_x\|^2),
		\end{align}
\begin{align}\label{13.49}
			J_{2}=&-\frac{\mu}{b}\int_{\mathbb{R}^+}\bar\rho_{xx}\varphi_x(a\varphi_x-b\zeta){\rm d}x-\frac{\mu}{b}\int_{\mathbb{R}^+}\bar\rho_{x}\varphi_x(a\varphi_{xx}-b\zeta_{x}){\rm d}x\notag\\ 
			\lesssim&\|\bar\rho_{xx}\|_{L^{\infty}}\|\varphi_x\|(\|\varphi_x\|+\|\zeta\|)+\|\bar\rho_x\|_{L^{\infty}}\|\varphi_{xx}\|(\|\varphi_{x}\|+\|\zeta\|)\notag\\	
			\lesssim&\delta\|\varphi_{xx}\|^2+\delta(1+t)^{-1}\left(\|\varphi_x\|^2+\|\zeta\|^2\right),
		\end{align}
		\begin{align}\label{13.50}
			\sum_{i=3}^{7}J_i\lesssim \delta{\rm e}^{-ct},
		\end{align}
\begin{align}\label{13.51}
J_{8}+J_{9}\lesssim&\int_{\mathbb{R}^+}|(p^\prime(\varphi_{x}+\bar{\rho}+\hat{\rho})-p^\prime(\bar{\rho}))(\varphi_{xx}+\bar{\rho}_{x})\varphi_{xx}|{\rm d}x+\int_{\mathbb{R}^+}(|\hat{\rho}_{x}|+|\bar{\rho}_{x}\bar{\rho}_{t}|+|\bar{\rho}_{xt}|)|\varphi_{xx}|{\rm d}x\nonumber\\
   &+\int_{\mathbb{R}^+}|\bar{\rho}_{x}\varphi_{x}\varphi_{xx}|{\rm d}x\nonumber\\
   \lesssim& (\varepsilon+\delta)\|\varphi_{xx}\|^{2}+\delta(1+t)^{-1}\|\varphi_{x}\|^{2}+\delta (1+t)^{-\frac{5}{2}},	
\end{align}
\begin{align}\label{13.52}
J_{10}\lesssim&
			\int_{\mathbb{R}^+}\left(|\varphi_t|+|\bar\rho_x|+|\hat m|\right)\left(|\varphi_{xt}|+|\bar\rho_{xx}|+|\bar\rho_x|^2+|\hat m_x|\right)|\varphi_{xx}|{\rm d}x\notag\\
			&+\int_{\mathbb{R}^+}\left(|\varphi_t|^2+|\bar\rho_x|^2+|\hat m|^2\right)\left(|\varphi_{xx}|+|\bar\rho_{x}|+|\hat\rho_x|\right)|\varphi_{xx}|{\rm d}x\notag\\
			\lesssim&(\varepsilon+\delta)\|\varphi_{xx}\|^2+{\rm e}^{-ct}\left(\|\varphi_{xx}\|^2+\|\varphi_{xt}\|^2\right) +\delta(1+t)^{-\frac{5}{2}},	
\end{align}
and
\begin{align}\label{13.53}
			\sum_{i=11}^{14}J_i\lesssim&\|\bar\rho_x\|_{L^{\infty}}(\|\zeta\|\|\varphi_{xx}\|+\|\zeta\|\|g_{x}\|+\|\zeta_x\|\|\zeta_{xx}\|)+\|\bar\rho_t\|_{L^{\infty}}\|\zeta_x\|^2+\|\zeta\|\|g_{xx}\|\notag\\
			\lesssim&\delta(\|\varphi_{xx}\|^2+\|\zeta_{xx}\|^2)+\delta(1+t)^{-1}\left(\|\varphi_x\|^2+\|\zeta\|^2+\|\zeta_x\|^2\right)+\delta (1+t)^{-\frac{5}{2}}.
		\end{align}
	Substituting \eqref{13.48}--\eqref{13.53} into \eqref{13.47}, we deduce
\begin{align}\label{13.54}
			&\frac{{\rm d}}{{\rm d}t}\left\{\frac{\alpha}{2}\int_{\mathbb{R}^+}\varphi_x^2{\rm d}x+\int_{\mathbb{R}^+}^{}\varphi_{xt}\varphi_x{\rm d}x+\frac{\mu}{2a}\int_{\mathbb{R}^+}^{}\bar\rho\zeta_x^2{\rm d}x\right\}+\frac{\mu D}{a}\int_{\mathbb{R}^+}^{}\bar\rho\zeta_{xx}^2{\rm d}x	\notag\\
			&+\int_{\mathbb{R}^+}^{}p^{\prime}(\bar\rho)\varphi_{xx}^2{\rm d}x-2\mu\int_{\mathbb{R}^+}^{}\bar\rho\zeta_x\varphi_{xx}{\rm d}x+\frac{b\mu}{a}\int_{\mathbb{R}^+}^{}\bar\rho\zeta_x^2{\rm d}x\notag\\
			\lesssim&(\varepsilon+\delta)(\|\varphi_{xx}\|^2+\|\zeta_{xx}\|^2)+\|\varphi_{xt}\|^2+{\rm e}^{-ct}\|\varphi_{xx}\|^2+(\varepsilon+\delta)(1+t)^{-1}\left(\|\varphi_x\|^2+\|\zeta\|_1^2\right)\notag\\
			&+\delta (1+t)^{-\frac{5}{2}}.
		\end{align}

Next, calculating $\int_{\mathbb{R}^+}\left(\partial_x\eqref{29}_1\times\varphi_{xt}\right){\rm d}x+\int_{\mathbb{R}}\left(\partial_x\eqref{29}_2\times\left(\frac{\mu}{a}\bar\rho\zeta_{xt}\right)\right){\rm d}x$ and using \eqref{13.19} show
		\begin{align}\label{13.55}
			&\;\;\;\;\frac{{\rm d}}{{\rm d}t}\left\{\frac{1}{2}\int_{\mathbb{R}^+}\varphi_{xt}^2{\rm d}x	+\frac{\mu D}{2a}\int_{\mathbb{R}^+}^{}\bar\rho\zeta_{xx}^2{\rm d}x\right\}+\alpha\int_{\mathbb{R}^+}\varphi_{xt}^2{\rm d}x+\frac{\mu}{a}\int_{\mathbb{R}^+}^{}\bar\rho\zeta_{xt}^2{\rm d}x\notag\\
			&\;\;\;\;+\frac{{\rm d}}{{\rm d}t}\left\{\frac{1}{2}\int_{\mathbb{R}^+}p^{\prime}(\bar\rho)\varphi_{xx}^2{\rm d}x-\mu\int_{\mathbb{R}^+}\bar\rho\zeta_x \varphi_{xx}{\rm d}x+\frac{\mu b}{2a}\int_{\mathbb{R}^+}^{}\bar\rho\zeta_x^2{\rm d}x\right\}\notag\\
			&=\frac{1}{2}\int_{\mathbb{R}^+}p^{\prime\prime}(\bar\rho)\bar\rho_t\varphi_{xx}^2{\rm d}x-\int_{\mathbb{R}^+}(p^{\prime\prime}(\bar\rho)\bar\rho_x\varphi_x)_x\varphi_{xt}{\rm d}x-\mu\int_{\mathbb{R}^+}\bar\rho_t\zeta_x \varphi_{xx}{\rm d}x+\frac{\mu b}{2a}\int_{\mathbb{R}^+}^{}\bar\rho_t\zeta_x^2{\rm d}x	\notag\\
			&\;\;\;\;+\frac{\mu D}{2a}\int_{\mathbb{R}^+}^{}\bar\rho_t\zeta_{xx}^2{\rm d}x
			-\frac{\mu D}{a}\int_{\mathbb{R}^+}^{}\bar\rho_x\zeta_{xx}\zeta_{xt} {\rm d}x-\frac{\mu}{b}\int_{\mathbb{R}^+}[\bar\rho_x(a\varphi_x-b\zeta)]_x\varphi_{xt}{\rm d}x\notag\\
			&\;\;\;\;	-\mu\int_{\mathbb{R}^+}(\varphi_x\zeta_x)_x\varphi_{xt}{\rm d}x-\mu\int_{\mathbb{R}^+}(\varphi_x\hat\phi_x)_x\varphi_{xt}{\rm d}x-\mu\int_{\mathbb{R}^+}(\hat\rho\zeta_x)_x\varphi_{xt}{\rm d}x-\mu\int_{\mathbb{R}^+}(\bar\rho\hat\phi_x)_x\varphi_{xt}{\rm d}x\notag\\
			&\;\;\;\;-\mu\int_{\mathbb{R}^+}(\hat\rho\bar\phi_x)_x\varphi_{xt}{\rm d}x-\mu\int_{\mathbb{R}^+}(\hat\rho\hat\phi_x)_x\varphi_{xt}{\rm d}x+\frac{\mu}{a}\int_{\mathbb{R}^+}^{}\bar\rho\zeta_{xt}g_x {\rm d}x-\int_{\mathbb{R}^+}f_{xx}\varphi_{xt}{\rm d}x-\int_{\mathbb{R}^+}h_{xx}\varphi_{xt}{\rm d}x\notag\\
			&\;\;\;\;+\int_{\mathbb{R}^+}p^{\prime\prime}(\bar\rho)\bar\rho_x\varphi_{xx}\varphi_{xt}{\rm d}x-\mu\int_{\mathbb{R}^+}\bar\rho_{xx}\zeta \varphi_{xt}{\rm d}x-\mu\int_{\mathbb{R}^+}\bar\rho_x\zeta_x\varphi_{xt}{\rm d}x=:\sum_{i=15}^{33}J_i,
		\end{align}
		where we used 
		\begin{align}
		\int_{\mathbb{R}^+}(\bar\rho\zeta)_{xx}\varphi_{xt}{\rm d}x=&-\int_{\mathbb{R}^+}(\bar\rho\zeta)_x\varphi_{xxt}{\rm d}x=-\int_{\mathbb{R}^+}(\bar\rho_x\zeta+\bar\rho\zeta_x)\varphi_{xxt}{\rm d}x\notag\\ 
		=&-\frac{{\rm d}}{{\rm d}t}\int_{\mathbb{R}^+}\bar\rho\zeta_x\varphi_{xx}{\rm d}x+\int_{\mathbb{R}^+}(\bar\rho_t\zeta_x+\bar\rho\zeta_{xt})\varphi_{xx}{\rm d}x+\int_{\mathbb{R}^+}(\bar\rho_{xx}\zeta+\bar\rho_{x}\zeta_x)\varphi_{xt}{\rm d}x.\notag	
		\end{align}
		We now estimate the terms on the right side of \eqref{13.55} one by one. Using Lemmas \ref{l3.1}--\ref{l3.2}, \eqref{13.15}, \eqref{13.18} and Young inequality yield
		\begin{align}\label{13.56}
			\sum_{i=15}^{21}J_i&\lesssim
			\delta(1+t)^{-1}\left(\|\varphi_{xx}\|^2+\|\zeta_x\|^2+\|\zeta_{xx}\|^2\right)+\delta(1+t)^{-2}\left(\|\varphi_{x}\|^2+\|\zeta\|^2\right)\notag\\
			&\;\;\;\;+\delta\left(\|\varphi_{xt}\|^2+\|\zeta_{xt}\|^2\right),
		\end{align}
		\begin{align}\label{13.57}
			J_{22}
			&\lesssim\|\zeta_x\|_{L^{\infty}}\|\varphi_{xx}\|\|\varphi_{xt}\|+\|\varphi_x\|_{L^{\infty}}\|\zeta_{xx}\|\|\varphi_{xt}\|\notag\\
			&\lesssim \varepsilon\|\varphi_{xt}\|^2+\varepsilon(1+t)^{-1}(\|\varphi_{xx}\|^2+\|\zeta_{xx}\|^2),
		\end{align}
	and
		\begin{align}\label{13.58}
			\sum_{i=23}^{27}J_i\lesssim\delta{\rm e}^{-ct}.
		\end{align}
		For $J_{28}$--$J_{33}$, we recall the definition of $g$ and $f$ in \eqref{22}, and apply Lemmas \ref{l3.1}--\ref{l3.2} and \eqref{13.15} to get
		\begin{align}\label{13.59}
			J_{28}
			&=\frac{\mu}{a}\frac{{\rm d}}{{\rm d}t}\int_{\mathbb{R}^+}\bar\rho\zeta_x g_x{\rm d}x-\frac{\mu}{a}\int_{\mathbb{R}^+}\bar\rho_t\zeta_x g_x{\rm d}x-\frac{\mu}{a}\int_{\mathbb{R}^+}\bar\rho\zeta_x g_{xt}{\rm d}x\notag\\
			&\leq \frac{\mu}{a}\frac{{\rm d}}{{\rm d}t}\int_{\mathbb{R}^+}\bar\rho\zeta_x g_x{\rm d}x+C(1+t)^{-1}\|\zeta_x\|^2+C(1+t)\int_{\mathbb{R}^+}(\bar\rho^2g_{xt}^2+\bar\rho_t^2g_x^2){\rm d}x\notag\\
			&\leq \frac{\mu}{a}\frac{{\rm d}}{{\rm d}t}\int_{\mathbb{R}^+}\bar\rho\zeta_x g_x{\rm d}x+C(1+t)^{-1}\|\zeta_x\|^2+C\delta(1+t)^{-\frac{7}{2}},
		\end{align}
		\begin{align}\label{13.60}
			J_{29}+J_{30}
			\leq& -\frac{1}{2}\frac{{\rm d}}{{\rm d}t}\int_{\mathbb{R}^+}[p^{\prime}(\varphi_x+\bar\rho+\hat\rho)-p^{\prime}(\bar\rho)]\varphi_{xx}^2{\rm d}x+C\varepsilon\|\varphi_{xt}\|^2+C(\varepsilon+\delta)(1+t)^{-1}\|\varphi_{xx}\|^2\notag\\
			&+C(\varepsilon+\delta)(1+t)^{-2}\left(\|\varphi_{x}\|^2+\|\varphi_t\|^2\right)+C{\rm e}^{-ct}(\|\varphi_{xx}\|_1^2+\|\varphi_{xt}\|^2)+C\delta(1+t)^{-\frac{7}{2}},
		\end{align}
		and
		\begin{align}\label{13.61}
			J_{31}+J_{32}+J_{33}&
			\lesssim\delta\|\varphi_{xt}\|^2+\delta(1+t)^{-1}\left(\|\varphi_{xx}\|^2+\|\zeta_{x}\|^2\right)+\delta(1+t)^{-2}\|\zeta\|^2.
		\end{align}
		Putting \eqref{13.56}--\eqref{13.61} into \eqref{13.55}, and using the smallness of $\varepsilon+\delta$, we can deduce
		\begin{align}\label{13.62}
			&\frac{{\rm d}}{{\rm d}t}\left\{\frac{1}{2}\int_{\mathbb{R}^+}\varphi_{xt}^2{\rm d}x	+\frac{\mu D}{2a}\int_{\mathbb{R}^+}^{}\bar\rho\zeta_{xx}^2{\rm d}x\right\}+\frac{\alpha}{2}\int_{\mathbb{R}^+}\varphi_{xt}^2{\rm d}x+\frac{\mu}{2a}\int_{\mathbb{R}^+}^{}\bar\rho\zeta_{xt}^2{\rm d}x\notag\\
			&+\frac{{\rm d}}{{\rm d}t}\left\{\frac{1}{2}\int_{\mathbb{R}^+}p^{\prime}(\bar\rho)\varphi_{xx}^2{\rm d}x-\mu\int_{\mathbb{R}^+}\bar\rho\zeta_x \varphi_{xx}{\rm d}x+\frac{\mu b}{2a}\int_{\mathbb{R}^+}^{}\bar\rho\zeta_x^2{\rm d}x\right\}\notag\\
			\leq&
			-\frac{1}{2}\frac{{\rm d}}{{\rm d}t}\int_{\mathbb{R}^+}[p^{\prime}(\varphi_x+\bar\rho+\hat\rho)-p^{\prime}(\bar\rho)]\varphi_{xx}^2{\rm d}x+\frac{\mu}{a}\frac{{\rm d}}{{\rm d}t}\int_{\mathbb{R}^+}\bar\rho\zeta_x g_x{\rm d}x\notag\\
			&+ C(\varepsilon+\delta)(1+t)^{-1}\left(\|\varphi_{xx}\|^2+\|\zeta_x\|_1^2\right)+C(\varepsilon+\delta)(1+t)^{-2}\left(\|\varphi_{x}\|^2+\|\varphi_t\|^2+\|\zeta\|^2\right)\notag\\ 
			&+C{\rm e}^{-ct}(\|\varphi_{xx}\|_1^2+\|\varphi_{xt}\|^2)+C\delta(1+t)^{-\frac{7}{2}}.
		\end{align}	
By calculating $(1 + t)^k \cdot (\lambda \cdot\eqref{13.54} + \eqref{13.62}) ~(0 < \lambda\ll 1)$ with $k = 0, 1$, respectively, then integrating it over $(0, t)$ and using Lemma \ref{l3.3}, we have	
		\begin{align}\label{13.63}
			&\;\;\;\;(1+t)\|[\varphi_x,\varphi_{xt},\varphi_{xx},\zeta_x,\zeta_{xx}](t)\|^2+\int_{0}^{t}(1+\tau)\|[\varphi_{x\tau},\zeta_{x\tau},\varphi_{xx},\zeta_x,\zeta_{xx}](\tau)\|^2{\rm d}\tau\notag\\
			&\leq C(\|\varphi_0\|_3^2+\|\psi_0\|_2^2+\|\zeta_0\|_3^2+\delta)+C\int_{0}^{t}{\rm e}^{-c\tau}\left(\|\varphi(\tau)\|_3^2+\|\varphi_{\tau}(\tau)\|_1^2\right){\rm d}\tau.
		\end{align}
	By calculating $(1 + t)^2 \cdot\eqref{13.62}$, then integrating it over $(0, t)$ and using Lemma \ref{l3.3} and \eqref{13.63} give
	\begin{align}\label{13.64}
			&\;\;\;\;(1+t)^2\|[\varphi_{xt},\varphi_{xx},\zeta_x,\zeta_{xx}](t)\|^2+\int_{0}^{t}(1+\tau)^2\|[\varphi_{x\tau},\zeta_{x\tau}](\tau)\|^2{\rm d}\tau\notag\\
			&\leq C(\|\varphi_0\|_3^2+\|\psi_0\|_2^2+\|\zeta_0\|_3^2+\delta)+C\int_{0}^{t}{\rm e}^{-c\tau}\left(\|\varphi(\tau)\|_3^2+\|\varphi_{\tau}(\tau)\|_1^2\right){\rm d}\tau,
		\end{align}
		which, together with \eqref{13.63} lead to \eqref{13.46}.	
	\end{proof}

	\begin{lemma}\label{l3.5}
		Under the hypothesis of Proposition \ref{p3.1}, it holds that
		\begin{align}\label{13.65}
			&\;\;\;\;(1+t)^3\|[\varphi_{tt},\varphi_{xt},\zeta_{t},\zeta_{xt}](t)\|^2+(1+t)^2\|\varphi_t(t)\|^2\notag\\
			&\;\;\;\;+\int_{0}^{t}[(1+\tau)^3\|[\varphi_{\tau\tau},\zeta_{\tau\tau}](\tau)\|^2+(1+\tau)^2\|[\varphi_{x\tau},\zeta_{x\tau},\zeta_{\tau}](\tau)\|^2]{\rm d}\tau\notag\\
			&\leq C(\|\varphi_0\|_3^2+\|\psi_0\|_2^2+\|\zeta_0\|_4^2+\delta)+C\int_{0}^{t}{\rm e}^{-c\tau}\left(\|\varphi(\tau)\|_3^2+\|\varphi_{\tau}(\tau)\|_1^2\right){\rm d}\tau,
		\end{align}
		and
		\begin{align}\label{13.66}
			&\;\;\;\;(1+t)^4\|[\varphi_{xtt},\varphi_{xxt},\zeta_{xt},\zeta_{xxt}](t)\|^2+(1+t)^3\|\varphi_{xt}(t)\|^2\notag\\
			&\;\;\;\;\int_{0}^{t}[(1+\tau)^4\|[\varphi_{x\tau\tau},\zeta_{x\tau\tau}](\tau)\|^2+(1+\tau)^3\|[\varphi_{xx\tau},\zeta_{x\tau},\zeta_{xx\tau}](\tau)\|^2]{\rm d}\tau\notag\\
			&\leq C(\|\varphi_0\|_3^2+\|\psi_0\|_2^2+\|\zeta_0\|_4^2+\delta)+C\int_{0}^{t}{\rm e}^{-c\tau}\left(\|\varphi(\tau)\|_3^2+\|\varphi_{\tau}(\tau)\|_1^2\right){\rm d}\tau.
		\end{align}
	\end{lemma}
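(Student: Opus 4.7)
\textbf{Proof plan for Lemma \ref{l3.5}.}
The strategy is to replicate the energy method of Lemmas \ref{l3.3}--\ref{l3.4} on the time-differentiated system. Concretely, differentiate \eqref{29} once in $t$ to obtain a wave-type equation for $\varphi_t$ coupled with a parabolic equation for $\zeta_t$, whose principal part matches \eqref{29}, modulo commutators with $\bar\rho$, $\bar\phi$, $\hat\rho$, $\hat\phi$ and the $t$-derivatives of the nonlinearities $h$, $f$, $g$. The relevant boundary data transfers without effort, since $\varphi(0,t)=0$ and $\zeta_x(0,t)=0$ together with \eqref{13a} and \eqref{7} yield $\varphi_t(0,t)=\varphi_{xxt}(0,t)=\zeta_{xt}(0,t)=0$. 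This means integrations by parts go through with no boundary contributions, exactly as in Lemmas \ref{l3.3}--\ref{l3.4}.

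For \eqref{13.65}, I would carry out two coupled energy identities. First, compute $\int_{\mathbb{R}^+}(\partial_t\eqref{29}_1\cdot \varphi_t){\rm d}x+\int_{\mathbb{R}^+}(\partial_t\eqref{29}_2\cdot\frac{\mu}{a}\bar\rho\zeta_t){\rm d}x$, which produces the $t$-analogue of \eqref{33} and controls the cross term $\int\bar\rho\zeta_t\varphi_{xt}{\rm d}x$ using the positive definiteness \eqref{positivedefine1} applied with $x=\varphi_{xt}$, $y=\zeta_t$. Second, compute $\int_{\mathbb{R}^+}(\partial_t\eqref{29}_1\cdot\varphi_{tt}){\rm d}x+\int_{\mathbb{R}^+}(\partial_t\eqref{29}_2\cdot\frac{\mu}{a}\bar\rho\zeta_{tt}){\rm d}x$, producing the $t$-analogue of \eqref{38}. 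All commutator terms involve $\partial_t$ falling on $\bar\rho,\bar\phi$ (gaining $(1+t)^{-1}$ via Lemma \ref{l3.1}) or $\hat\rho,\hat m,\hat\phi$ (gaining exponential decay via Lemma \ref{l3.2}), while the nonlinear terms $\partial_t h$, $\partial_t f$, $\partial_t g$ are handled by the Taylor/Young arguments already employed for $I_{10}$, $I_{11}$, $I_{25}$, $I_{26}$, $I_{27}$ together with the a priori decay bounds \eqref{13.18}. Combining the two identities with weights $(1+t)^2$ and $(1+t)^3$ respectively, integrating over $[0,t]$, and absorbing the resulting $(1+t)^2\|\varphi_t\|^2$ and $(1+t)^2\|\zeta_t\|_1^2$ terms using Lemmas \ref{l3.3}--\ref{l3.4} yields \eqref{13.65}.

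For \eqref{13.66}, I would repeat the same program on the twice-differentiated system $\partial_x\partial_t\eqref{29}$: compute $\int_{\mathbb{R}^+}(\partial_x\partial_t\eqref{29}_1\cdot\varphi_{xt}){\rm d}x+\int_{\mathbb{R}^+}(\partial_x\partial_t\eqref{29}_2\cdot\frac{\mu}{a}\bar\rho\zeta_{xt}){\rm d}x$ and also the analogue pairing with $\varphi_{xtt}$, $\frac{\mu}{a}\bar\rho\zeta_{xtt}$. The structure mirrors Lemma \ref{l3.4}, with commutators now including $\bar\rho_x$, $\bar\rho_{xt}$, $\bar\phi_x$, $\hat\rho_x$, etc.; all of these are bounded by Lemmas \ref{l3.1}--\ref{l3.2}. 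Multiplying by the weights $(1+t)^3$ and $(1+t)^4$ and integrating yields \eqref{13.66}, with lower-order contributions absorbed by Lemmas \ref{l3.3}--\ref{l3.4} and the just-established \eqref{13.65}. At the end, the bounds on $\|\partial_x^3[\varphi,\zeta]\|$ appearing implicitly in Proposition \ref{p3.1} follow by algebraically solving for $\varphi_{xxx}$ and $\zeta_{xxx}$ from $\partial_x\eqref{29}_1$ and $\eqref{29}_2$.

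\textbf{Anticipated obstacle.} The technical difficulty lies not in the energy identities themselves but in controlling the time derivatives of the quadratic and cubic nonlinearities $h$, $f$, $g$. Differentiating \eqref{hx} and \eqref{fx} once more in $t$ produces terms like $\varphi_{ttt}$, $\varphi_{xtt}$, and $\bar\rho_{xtt}$ multiplied by $L^\infty$ factors that must decay fast enough to match the weights $(1+t)^3$ and $(1+t)^4$. The critical point is to always let $\partial_t$ fall on $\bar\rho$ or $\hat m$ (to pick up $(1+t)^{-1}$ or ${\rm e}^{-ct}$ respectively), never forcing extra $\partial_t$ on $\varphi$ or $\zeta$ beyond the level of regularity already controlled, and to use the positive-definite quadratic form \eqref{positivedefine1} whenever a cross term $\mu\bar\rho\zeta_{t}\varphi_{xt}$ or $\mu\bar\rho\zeta_{xt}\varphi_{xxt}$ appears, exactly as Remark \ref{remark32} indicates. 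The bookkeeping of weights is the main obstacle, but once each term is placed in the correct dissipation-versus-source budget the conclusion follows as in Lemmas \ref{l3.3}--\ref{l3.4}.
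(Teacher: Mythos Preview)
Your proposal follows essentially the same approach as the paper: time-differentiate \eqref{29}, form the same two pairs of energy identities (testing $\partial_t\eqref{29}$ against $\varphi_t,\frac{\mu}{a}\bar\rho\zeta_t$ and against $\varphi_{tt},\frac{\mu}{a}\bar\rho\zeta_{tt}$; then $\partial_{xt}\eqref{29}$ against $\varphi_{xt},\frac{\mu}{a}\bar\rho\zeta_{xt}$ and against $\varphi_{xtt},\frac{\mu}{a}\bar\rho\zeta_{xtt}$), and close with the positive-definite form \eqref{positivedefine1} and the time-weight hierarchy.

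One organisational point deserves attention. You present the argument as ``first close \eqref{13.65}, then use it to close \eqref{13.66}''. In the paper this does \emph{not} quite work sequentially: when you pair $\partial_t\eqref{29}_1$ with $\varphi_t$ the term $-\int h_{xt}\varphi_t\,{\rm d}x$ produces contributions of size $\|\varphi_{xxt}\|^2$ (and the pairing with $\varphi_{tt}$ produces $(1+t)\|\varphi_{xtt}\|^2$), so after inserting the weights the intermediate estimate reads
\[
\text{LHS of \eqref{13.65}}\;\leq\;C(\cdots)+C\int_0^t\Big[(1+\tau)^2\|\varphi_{xx\tau}\|^2+(1+\tau)^4\|\varphi_{x\tau\tau}\|^2\Big]{\rm d}\tau,
\]
and neither of the two integrals on the right is controlled by Lemmas \ref{l3.3}--\ref{l3.4}. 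The paper therefore combines all four weighted identities simultaneously (the $\partial_t$-pair and the $\partial_{xt}$-pair) so that the $\int(1+\tau)^4\|\varphi_{x\tau\tau}\|^2$ term is absorbed by the dissipation coming from the $\varphi_{xtt}$-identity, and $\int(1+\tau)^2\|\varphi_{xx\tau}\|^2$ by the $\varphi_{xt}$-identity. Your scheme is otherwise correct; you just need to run the four estimates together rather than in sequence.
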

\begin{proof}
Similar to the previous calculation, firstly, it follows from
$\int_{\mathbb{R}^+}(\partial_t\eqref{29}_1\times \varphi_{t}){\rm d}x+\int_{\mathbb{R}^+}(\partial_t\eqref{29}_2\times\frac{\mu}{a}\bar\rho\zeta_{t}){\rm d}x$	that 
\begin{align}\label{33}
			&\frac{{\rm d}}{{\rm d}t}\left\{\frac{\alpha}{2}\int_{\mathbb{R}^+}\varphi_t^2{\rm d}x+\int_{\mathbb{R}^+}^{}\varphi_{tt}\varphi_t{\rm d}x+\frac{\mu}{2a}\int_{\mathbb{R}^+}^{}\bar\rho\zeta_t^2{\rm d}x\right\}+\frac{\mu D}{a}\int_{\mathbb{R}^+}^{}\bar\rho\zeta_{xt}^2{\rm d}x	\notag\\
			&+\int_{\mathbb{R}^+}^{}p^{\prime}(\bar\rho)\varphi_{xt}^2{\rm d}x-2\mu\int_{\mathbb{R}^+}^{}\bar\rho\zeta_t\varphi_{xt}{\rm d}x+\frac{b\mu}{a}\int_{\mathbb{R}^+}^{}\bar\rho\zeta_t^2{\rm d}x\notag\\
			=&\int_{\mathbb{R}^+}\varphi_{tt}^2{\rm d}x-\mu\int_{\mathbb{R}^+}^{}\partial_t(\varphi_{x}\zeta_x)\varphi_{t} {\rm d}x-\frac{\mu}{b}\int_{\mathbb{R}^+}^{}\partial_t[\bar\rho_x(a\varphi_{x}-b\zeta)]\varphi_{t}{\rm d}x-\mu\int_{\mathbb{R}^+}^{}\partial_t(\varphi_{x}\hat\phi_x)\varphi_{t}{\rm d}x\notag\\
			&-\mu\int_{\mathbb{R}^+}^{}\partial_t(\hat\rho\zeta_x)\varphi_{t}{\rm d}x	-\mu\int_{\mathbb{R}^+}^{}\partial_t(\bar\rho\hat\phi_x)\varphi_{t}{\rm d}x-\mu\int_{\mathbb{R}^+}^{}\partial_t(\hat\rho\bar\phi_x)\varphi_{t}{\rm d}x	-\mu\int_{\mathbb{R}^+}^{}\partial_t(\hat\rho\hat\phi_x)\varphi_{t}{\rm d}x\notag\\
			&
			+\int_{\mathbb{R}^+}^{}f_{t}\varphi_{xt}{\rm d}x-\int_{\mathbb{R}^+}^{}p^{\prime}(\bar\rho)_t\varphi_{x}\varphi_{xt}{\rm d}x
			-\int_{\mathbb{R}^+}^{}h_{xt}\varphi_{t}{\rm d}x
			+\mu\int_{\mathbb{R}^+}^{}\bar\rho_t\zeta\varphi_{xt}{\rm d}x\notag\\ 
			&+\frac{\mu}{2a}\int_{\mathbb{R}^+}^{}\bar\rho_t\zeta_t^2{\rm d}x-\frac{\mu D}{a}\int_{\mathbb{R}^+}^{}\bar\rho_x\zeta_{xt}\zeta_{t} {\rm d}x+\frac{\mu}{a}\int_{\mathbb{R}^+}^{}\bar\rho\zeta_{t}g_t {\rm d}x\notag\\
			\lesssim& \|\varphi_{tt}\|_1^2 +\|\varphi_{xxt}\|^2+(\varepsilon+\delta)(\|\varphi_{xt}\|^2+\|\zeta_{t}\|_1^2)+(1+t)^{-1}\left(\|\varphi_{xx}\|^2+\|\varphi_t\|^2\right)\notag\\ 
			&+(1+t)^{-2}\left(\|\varphi_x\|^2+\|\zeta\|^2\right)+\delta (1+t)^{-\frac{7}{2}}. 
		\end{align}
Then, 
by $[\int_{\mathbb{R}^+}(\partial_t\eqref{29}_1\times \varphi_{tt}){\rm d}x+\int_{\mathbb{R}^+}(\partial_t\eqref{29}_2\times\frac{\mu}{a}\bar\rho\zeta_{tt}){\rm d}x$, we can also obtain that
\begin{align}\label{421k1}
			&\frac{{\rm d}}{{\rm d}t}\left\{\frac{1}{2}\int_{\mathbb{R}^+}\varphi_{tt}^2{\rm d}x	+\frac{\mu D}{2a}\int_{\mathbb{R}^+}^{}\bar\rho\zeta_{xt}^2{\rm d}x\right\}+\alpha\int_{\mathbb{R}^+}\varphi_{tt}^2{\rm d}x+\frac{\mu}{a}\int_{\mathbb{R}^+}^{}\bar\rho\zeta_{tt}^2{\rm d}x\notag\\
			&+\frac{{\rm d}}{{\rm d}t}\left\{\frac{1}{2}\int_{\mathbb{R}^+}p^{\prime}(\bar\rho)\varphi_{xt}^2{\rm d}x-\mu\int_{\mathbb{R}^+}\bar\rho\zeta_t \varphi_{xt}{\rm d}x+\frac{\mu b}{2a}\int_{\mathbb{R}^+}^{}\bar\rho\zeta_t^2{\rm d}x\right\}\notag\\
			=&\frac{1}{2}\int_{\mathbb{R}^+}p^{\prime\prime}(\bar\rho)\bar\rho_t\varphi_{xt}^2{\rm d}x-\mu\int_{\mathbb{R}^+}\bar\rho_t\zeta_t \varphi_{xt}{\rm d}x+\frac{\mu b}{2a}\int_{\mathbb{R}^+}^{}\bar\rho_t\zeta_t^2{\rm d}x	
			+\frac{\mu D}{2a}\int_{\mathbb{R}^+}^{}\bar\rho_t\zeta_{xt}^2{\rm d}x
			-\frac{\mu D}{a}\int_{\mathbb{R}^+}^{}\bar\rho_x\zeta_{xt}\zeta_{tt} {\rm d}x\notag\\
			&-\frac{\mu}{b}\int_{\mathbb{R}^+}[\bar\rho_x(a\varphi_x-b\zeta)]_t\varphi_{tt}{\rm d}x-\mu\int_{\mathbb{R}^+}(\varphi_x\zeta_x)_t\varphi_{tt}{\rm d}x-\mu\int_{\mathbb{R}^+}(\varphi_x\hat\phi_x)_t\varphi_{tt}{\rm d}x-\mu\int_{\mathbb{R}^+}(\hat\rho\zeta_x)_t\varphi_{tt}{\rm d}x\notag\\
			&-\mu\int_{\mathbb{R}^+}(\bar\rho\hat\phi_x)_t\varphi_{tt}{\rm d}x-\mu\int_{\mathbb{R}^+}(\hat\rho\bar\phi_x)_t\varphi_{tt}{\rm d}x-\mu\int_{\mathbb{R}^+}(\hat\rho\hat\phi_x)_t\varphi_{tt}{\rm d}x+\frac{\mu}{a}\int_{\mathbb{R}^+}^{}\bar\rho\zeta_{tt}g_t {\rm d}x-\int_{\mathbb{R}^+}h_{xt}\varphi_{tt}{\rm d}x\notag\\
			&-\int_{\mathbb{R}^+}f_{xt}\varphi_{tt}{\rm d}x-\int_{\mathbb{R}^+}(p^{\prime}(\bar\rho)_t\varphi_{x})_x\varphi_{tt}{\rm d}x+\mu\int_{\mathbb{R}^+}\bar\rho_{t}\zeta \varphi_{xtt}{\rm d}x\notag\\
			\leq&\frac{\alpha}{4}\|\varphi_{tt}\|^2+C(\varepsilon+\delta)\|\zeta_{tt}\|^2+C(1+t)\|\varphi_{xtt}\|^2+\frac{\mu}{a}\frac{{\rm d}}{{\rm d}t}\int_{\mathbb{R}^+}\bar\rho\zeta_t g_t{\rm d}x+C(1+t)^{-1}\left(\|\varphi_{xt}\|^2+\|\zeta_t\|_1^2\right)\notag\\ 
			&+C(1+t)^{-2}\left(\|\varphi_{xx}\|^2+\|\varphi_t\|^2+\|\zeta_x\|^2\right)+C(1+t)^{-3}\left(\|\varphi_x\|^2+\|\zeta\|^2\right)+\delta (1+t)^{-\frac{9}{2}}.
		\end{align}
Combining the above two formulas and using Lemmas \ref{l3.3}--\ref{l3.4}, we can get
\begin{align}\label{59}
			&\;\;\;\;(1+t)^2\|\varphi_t(t)\|^2+(1+t)^3\|[\varphi_{tt},\varphi_{xt},\zeta_{t},\zeta_{xt}](t)\|^2\notag\\
			&\;\;\;\;+\int_{0}^{t}[(1+\tau)^2\|[\varphi_{x\tau},\zeta_{x\tau},\zeta_{\tau}](\tau)\|^2+(1+\tau)^3\|[\varphi_{\tau\tau},\zeta_{\tau\tau}](\tau)\|^2]{\rm d}\tau\notag\\
			&\leq C(\|\varphi_0\|_3^2+\|\psi_0\|_2^2+\|\zeta_0\|_3^2+\delta)+C\int_{0}^{t}[(1+\tau)^2\|\varphi_{xx\tau}\|+(1+\tau)^4\|\varphi_{x\tau\tau}\|^2]{\rm d}\tau\notag\\ 
			&\;\;\;\;+C\int_{0}^{t}{\rm e}^{-c\tau}\left(\|\varphi(\tau)\|_3^2+\|\varphi_{\tau}(\tau)\|_1^2\right){\rm d}\tau.
		\end{align}
Then by calculating $\int_{\mathbb{R}^+}(\partial_{xt}\eqref{29}_1\times \varphi_{xt}){\rm d}x+\int_{\mathbb{R}^+}(\partial_{xt}\eqref{29}_2\times\frac{\mu}{a}\bar\rho\zeta_{xt}){\rm d}x$ and $\int_{\mathbb{R}^+}(\partial_{xt}\eqref{29}_1\times \varphi_{xtt}){\rm d}x+\int_{\mathbb{R}^+}(\partial_{xt}\eqref{29}_2\times\frac{\mu}{a}\bar\rho\zeta_{xtt}){\rm d}x$, respectively, we can prove
\begin{align}\label{13.70}
			&\frac{{\rm d}}{{\rm d}t}\left\{\frac{\alpha}{2}\int_{\mathbb{R}^+}\varphi_{xt}^2{\rm d}x+\int_{\mathbb{R}^+}^{}\varphi_{xtt}\varphi_{xt}{\rm d}x+\frac{\mu}{2a}\int_{\mathbb{R}^+}^{}\bar\rho\zeta_{xt}^2{\rm d}x\right\}+\frac{\mu D}{a}\int_{\mathbb{R}^+}^{}\bar\rho\zeta_{xxt}^2{\rm d}x	\notag\\
			&+\int_{\mathbb{R}^+}^{}p^{\prime}(\bar\rho)\varphi_{xxt}^2{\rm d}x-2\mu\int_{\mathbb{R}^+}^{}\bar\rho\zeta_{xt}\varphi_{xxt}{\rm d}x+\frac{b\mu}{a}\int_{\mathbb{R}^+}^{}\bar\rho\zeta_{xt}^2{\rm d}x\notag\\
			\lesssim& \|\varphi_{xtt}\|^2 +(\varepsilon+\delta)(\|\varphi_{xxt}\|^2+\|\zeta_{xt}\|_1^2)+(\varepsilon+\delta)(1+t)^{-1}(\|\varphi_{xt}\|^2+\|\zeta_{t}\|_1^2)+(1+t)^{-2}\left(\|\varphi_{xx}\|^2+\|\varphi_t\|^2\right)\notag\\ 
			&+(1+t)^{-3}\left(\|\varphi_x\|^2+\|\zeta\|^2\right)+\delta (1+t)^{-\frac{9}{2}}, 
		\end{align}
\begin{align}\label{13.71}
			&\frac{{\rm d}}{{\rm d}t}\left\{\frac{1}{2}\int_{\mathbb{R}^+}\varphi_{xtt}^2{\rm d}x	+\frac{\mu D}{2a}\int_{\mathbb{R}^+}^{}\bar\rho\zeta_{xxt}^2{\rm d}x\right\}+\alpha\int_{\mathbb{R}^+}\varphi_{xtt}^2{\rm d}x+\frac{\mu}{a}\int_{\mathbb{R}^+}^{}\bar\rho\zeta_{xtt}^2{\rm d}x\notag\\
			&+\frac{{\rm d}}{{\rm d}t}\left\{\frac{1}{2}\int_{\mathbb{R}^+}p^{\prime}(\bar\rho)\varphi_{xxt}^2{\rm d}x-\mu\int_{\mathbb{R}^+}\bar\rho\zeta_{xt} \varphi_{xxt}{\rm d}x+\frac{\mu b}{2a}\int_{\mathbb{R}^+}^{}\bar\rho\zeta_{xt}^2{\rm d}x\right\}\notag\\
			\leq&\frac{\alpha}{4}\|\varphi_{xtt}\|^2+(\varepsilon+\delta)\|\zeta_{xtt}\|^2+\frac{\mu}{a}\frac{{\rm d}}{{\rm d}t}\int_{\mathbb{R}^+}\bar\rho\zeta_{xt} g_{xt}{\rm d}x+(\varepsilon+\delta)(1+t)^{-1}(\|\varphi_{xxt}\|^2+\|\zeta_{xt}\|_1^2)\notag\\ 
			&+(\varepsilon+\delta)(1+t)^{-2}\left(\|\varphi_{xt}\|^2+\|\zeta_t\|_1^2\right)+(1+t)^{-3}\left(\|\varphi_{xx}\|^2+\|\varphi_t\|^2+\|\zeta_x\|^2\right)\notag\\ 
			&+(1+t)^{-4}\left(\|\varphi_x\|^2+\|\zeta\|^2\right)+\delta (1+t)^{-\frac{11}{2}}.
		\end{align}
Combining \eqref{59}--\eqref{13.71} and using Lemmas \ref{l3.3}--\ref{l3.4} give \eqref{13.65}--\eqref{13.66}. The proof of this lemma is completed.
	\end{proof}

\begin{lemma}\label{lem53}
		Under the hypothesis of Proposition \ref{p3.1}, it holds that
		\begin{align}\label{13.72}
			&(1+t)^2\|[\varphi_{xxx},\zeta_{xxx}(t)]\|^2\notag\\ 
			\leq& C(\|\varphi_0\|_3^2+\|\psi_0\|_2^2+\|\zeta_0\|_4^2+\delta)+C\int_{0}^{t}{\rm e}^{-c\tau}\left(\|\varphi(\tau)\|_3^2+\|\varphi_{\tau}(\tau)\|_1^2\right){\rm d}\tau.
		\end{align}
	\end{lemma}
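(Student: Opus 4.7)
The plan is to extract $\varphi_{xxx}$ and $\zeta_{xxx}$ algebraically from the equations themselves, rather than running another weighted energy estimate. Since every other norm appearing in \eqref{13.15} with weight up to $(1+t)^3$ (and some with $(1+t)^4$) has already been controlled in Lemmas \ref{l3.3}--\ref{l3.5}, it suffices to solve for the third spatial derivatives and match weights.

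First I would handle $\zeta_{xxx}$. Differentiating $\eqref{29}_2$ once in $x$ gives
\begin{equation}\notag
D\zeta_{xxx} = \zeta_{xt} - a\varphi_{xx} + b\zeta_x - g_x.
\end{equation}
Taking $L^2$ norms and multiplying by $(1+t)^2$, the first three terms on the right are bounded using $(1+t)^3\|\zeta_{xt}\|^2\lesssim 1$ from Lemma \ref{l3.5}, $(1+t)^2\|\varphi_{xx}\|^2\lesssim 1$ from Lemma \ref{l3.4}, and $(1+t)^2\|\zeta_x\|^2\lesssim 1$ also from Lemma \ref{l3.4}. For the inhomogeneity, the definition $g=-\bar\phi_t+D\bar\phi_{xx}+D\hat\phi_{xx}$ together with Lemmas \ref{l3.1}--\ref{l3.2} gives $\|g_x\|^2\lesssim \delta^2(1+t)^{-3}+\delta^2 e^{-ct}$, which is more than enough.

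Next I would handle $\varphi_{xxx}$. Differentiating $\eqref{29}_1$ once in $x$ and isolating the highest spatial derivative yields an identity of the schematic form
\begin{equation}\notag
p^\prime(\bar\rho)\varphi_{xxx} = \varphi_{xtt}+\alpha\varphi_{xt}+\mu(\bar\rho\zeta_x)_x+\mu(\bar\rho_x\zeta)_x -\partial_x\bigl[p^{\prime\prime}(\bar\rho)\bar\rho_x\varphi_x\bigr] + \mathcal{N}_x + h_{xx}+f_{xx},
\end{equation}
where $\mathcal{N}$ collects the remaining nonlinear / correction-function terms on the right-hand side of \eqref{29}$_1$. Using \eqref{hx} and \eqref{fx}, one sees that
\begin{equation}\notag
h_{xx} = -\frac{\bigl(\varphi_t+\tfrac{1}{\alpha}q(\bar\rho)_x-\hat m\bigr)^2}{(\varphi_x+\bar\rho+\hat\rho)^2}\,\varphi_{xxx}+(\text{terms without }\varphi_{xxx}),
\end{equation}
\begin{equation}\notag
f_{xx} = -\bigl[p^\prime(\varphi_x+\bar\rho+\hat\rho)-p^\prime(\bar\rho)\bigr]\varphi_{xxx}+(\text{terms without }\varphi_{xxx}).
\end{equation}
By \eqref{13.18} and Lemma \ref{l3.1}, both coefficients in front of the dangerous $\varphi_{xxx}$ terms are of order $\varepsilon+\delta$, hence can be moved to the left and absorbed by $p^\prime(\bar\rho)\varphi_{xxx}$ using $p^\prime(\bar\rho)\geq c>0$. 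After this absorption, every remaining term on the right is at most of the type $\varphi_{xtt},\varphi_{xt},\varphi_{xx},\varphi_x,\zeta_{xx},\zeta_x,\zeta$, together with factors from Lemmas \ref{l3.1}--\ref{l3.2} and products already controlled in $L^\infty$ by \eqref{13.18}. Matching time weights, $(1+t)^2\|\varphi_{xtt}\|^2\leq (1+t)^{-2}\cdot(1+t)^4\|\varphi_{xtt}\|^2$ and $(1+t)^2\|\varphi_{xt}\|^2\leq (1+t)^{-1}\cdot(1+t)^3\|\varphi_{xt}\|^2$ are controlled by Lemma \ref{l3.5}, while the remaining pieces are controlled directly by Lemmas \ref{l3.3}--\ref{l3.4} and by the pointwise bounds on the diffusion wave and correction function.

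Combining the two bounds, $(1+t)^2\|[\varphi_{xxx},\zeta_{xxx}](t)\|^2$ is dominated by the right-hand side of \eqref{13.72}. The main (and only) subtle point is the absorption step for $\varphi_{xxx}$: one must be careful to expand $h_{xx}$ and $f_{xx}$ in full, verify that every factor multiplying $\varphi_{xxx}$ is $O(\varepsilon+\delta)$ uniformly in $t$, and check that the residual lower-order pieces (which contain $\varphi_{xxt}$, $\bar\rho_{xx}\varphi_x$, $\hat\rho_{xx}$, etc.) come with time weights already accommodated by Lemmas \ref{l3.4}--\ref{l3.5} and \ref{l3.1}--\ref{l3.2}.
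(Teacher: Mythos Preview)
Your proposal is correct and follows essentially the same approach as the paper: extract $\zeta_{xxx}$ and $\varphi_{xxx}$ algebraically from the equations and bound the right-hand sides using the weighted estimates already established in Lemmas \ref{l3.3}--\ref{l3.5}. The only cosmetic difference is that the paper works directly from \eqref{18} (so the coefficient of $\varphi_{xxx}$ is the full $p'(\varphi_x+\bar\rho+\hat\rho)$ plus the convection factor, bounded below immediately), whereas you start from \eqref{29} and absorb the $O(\varepsilon+\delta)$ contributions to $\varphi_{xxx}$ coming from $f_{xx}$ and $h_{xx}$ into $p'(\bar\rho)\varphi_{xxx}$; the two organizations amount to the same computation.
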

\begin{proof}
We rewrite \eqref{18} as
\begin{align}\notag
	&\displaystyle\left(\frac{(-\varphi_{t}+\bar m+\hat m)^2}{\varphi_x+\bar\rho+\hat\rho}\right)_{x}+\left[p(\varphi_x+\bar\rho+\hat\rho)-p(\bar\rho)\right]_{x}\notag\\
			&=\varphi_{tt}+\mu \varphi_x(\zeta_x+\bar\phi_x+\hat\phi_x)+\mu\bar\rho(\zeta_x+\hat\phi_x)+\mu\hat\rho(\zeta_x+\bar\phi_x+\hat\phi_x)+\alpha \varphi_{t}-\bar m_t,\notag
	\end{align}	
	\begin{align}
	\zeta_t=D\zeta_{xx}+a\varphi_x-b\zeta-\bar\phi_t+D\bar\phi_{xx}+D\hat\phi_{xx}.\notag
	\end{align}	
By direct calculation, we have
\begin{align}\label{13.73}
(1+t)^2\|\varphi_{xxx}\|^{2}\lesssim&(1+t)^2(\|\varphi_{xtt}\|^{2}+\|\varphi_{xt}\|_1^{2}+\|\zeta_{xx}\|^2+(1+t)^{-2}\|\varphi_{x}\|^2+(1+t)^{-1}\|\varphi_{xx}\|^2+\delta(1+t)^{-\frac{5}{2}})\notag\\
\lesssim& \|\varphi_0\|_3^2+\|\psi_0\|_2^2+\|\zeta_0\|_4^2+\delta+\int_{0}^{t}{\rm e}^{-c\tau}\left(\|\varphi(\tau)\|_3^2+\|\varphi_{\tau}(\tau)\|_1^2\right){\rm d}\tau,
\end{align}
\begin{align}\label{13.74}
(1+t)^2\|\zeta_{xxx}\|^{2}\lesssim&(1+t)^2(\|\varphi_{xt}\|^{2}+\|\varphi_{xx}\|^{2}+\|\zeta_{x}\|^2+\delta(1+t)^{-3})\notag\\
\lesssim& \|\varphi_0\|_3^2+\|\psi_0\|_2^2+\|\zeta_0\|_4^2+\delta+\int_{0}^{t}{\rm e}^{-c\tau}\left(\|\varphi(\tau)\|_3^2+\|\varphi_{\tau}(\tau)\|_1^2\right){\rm d}\tau.
\end{align}
Thus, the proof of this lemma is completed.
\end{proof}
	
{\bf Proof of Proposition \ref{p3.1}:} From \eqref{43}, \eqref{13.46}, \eqref{13.65}--\eqref{13.66} and \eqref{13.72}, we can deduce that
\begin{align}
			\|(\varphi,\zeta)(t)\|_3^2+\|\varphi_t(t)\|_2^2\leq C(\|\varphi_0\|_3^2+\|\psi_0\|_2^2+\|\zeta_0\|_4^2+\delta)+C\int_{0}^{t}{\rm e}^{-c\tau}\left(\|\varphi(\tau)\|_2^2+\|\varphi_{\tau}(\tau)\|_1^2\right){\rm d}\tau.\notag
		\end{align}
Then according to Gronwall's inequality, we can get
\begin{align}\label{13.75}
			\sup_{0\leq t\leq T}\|(\varphi,\zeta)(t)\|_3^2+\|\varphi_t(t)\|_2^2\leq C(\|\varphi_0\|_3^2+\|\psi_0\|_2^2+\|\zeta_0\|_4^2+\delta).
		\end{align}
		Then combining \eqref{43}, \eqref{13.46}, \eqref{13.65}--\eqref{13.66}, \eqref{13.72} and \eqref{13.75}, we obtain \eqref{13.16}--\eqref{13.17}.
This completes the proof of Proposition \ref{p3.1}.

	\section{Proof of Theorems \ref{thm2}--\ref{thm3}}\label{s4}
	In this section, we will prove the Theorems \ref{thm2}--\ref{thm3}. For the sake of brevity, we only prove Theorem \ref{thm2}. The proof of Theorem \ref{thm3} can be obtained similarly.

Firstly, we study some fundamental properties of the nonlinear diffusion wave $\bar{\rho}(x,t)$. 
From \cite{Duyn-Peletier1977}, one can confirm that \eqref{1.2} has a unique self-similar solution in the following form:
	\[
	\bar\rho(x,t)=\bar\rho\left(\frac{x}{\sqrt{1+t}}\right):=\bar\rho(\xi),\;\;\;\xi\in \mathbb{R}^+,\;\;\;\mbox{and}\;\;\;\bar\rho(+\infty)=\rho_{+}.
	\]
	Similar to \cite{Hsiao-Liu1992,Nishihara-Yang1999}, we can prove that the self-similar solution satisfies
	\[
	\sum_{k=1}^{5}\left|\frac{d^k}{d\xi^k}\bar\rho(\xi)\right|+|\bar\rho(\xi)-\rho_+|+|\bar\rho-\bar\rho_{0}(0)|\leq C|\rho_+-\bar\rho_{0}(0)|\mbox{exp}\left(-c\alpha\xi^2\right).
	\]
	With these analyses, we get the following lemma.
	\begin{lemma}\label{l4.1}
		For any $1\leq p\leq +\infty$ and $ j+k\geq 1$,
		the self-similar solution $\bar\rho(x,t)$ of \eqref{1.2} satisfies
		\begin{equation}\label{14.1}
			\left\{
			\begin{array}{lr}
				\min\{\rho_0(0),\rho_+\}\leq \bar\rho(x,t)\leq \max\{\rho_0(0),\rho_+\},\\[2mm]
				\|\partial_t^j\partial_x^k\bar\rho(t)\|_{L^p}\leq C|\rho_+-\rho_0(0)|(1+t)^{-\frac{k}{2}-j+\frac{1}{2p}}.
			\end{array}
			\right.
		\end{equation}
	\end{lemma}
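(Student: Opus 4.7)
The plan is to combine the parabolic comparison principle with a scaling argument based on the self-similar structure.

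For the first bound in \eqref{14.1}, I would apply the comparison principle to \eqref{1.2}. Under assumption \eqref{23} one has $q'(\rho)>0$, so the equation $\bar\rho_t=\frac{1}{\alpha}q(\bar\rho)_{xx}$ is uniformly parabolic, and the constants $\min\{\rho_0(0),\rho_+\}$ and $\max\{\rho_0(0),\rho_+\}$ are respectively a sub- and a super-solution compatible with the boundary values $\bar\rho(0,t)=\rho_0(0)$, $\bar\rho(+\infty,t)=\rho_+$. The comparison principle then gives the pointwise bound immediately.

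For the derivative estimates I would exploit the self-similar form $\bar\rho(x,t)=\bar\rho(\xi)$ with $\xi=x/\sqrt{1+t}$. Since $\partial_x\xi=(1+t)^{-1/2}$ and $\partial_t\xi=-\xi/(2(1+t))$, a straightforward induction on $j$ gives
\begin{equation}
\partial_t^j\partial_x^k\bar\rho(x,t)=(1+t)^{-k/2-j}F_{j,k}(\xi),
\end{equation}
where each $F_{j,k}(\xi)$ is a finite combination of terms of the form $\xi^m\bar\rho^{(\ell)}(\xi)$ with $1\leq\ell\leq j+k$ and $m\leq 2j$. Combining this with the pointwise Gaussian bound $\sum_{\ell=1}^{5}\bigl|\bar\rho^{(\ell)}(\xi)\bigr|\leq C|\rho_+-\rho_0(0)|\exp(-c\alpha\xi^2)$ stated immediately before the lemma, and absorbing the polynomial factors $\xi^m$ into a slightly weaker Gaussian, yields
\begin{equation}
|F_{j,k}(\xi)|\leq C|\rho_+-\rho_0(0)|\exp(-c'\alpha\xi^2)
\end{equation}
for some $c'\in(0,c)$, valid for $j+k$ up to the orders needed in the subsequent sections.

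Then for $1\leq p<\infty$, the change of variables $\xi=x/\sqrt{1+t}$, $\mathrm{d}x=\sqrt{1+t}\,\mathrm{d}\xi$, gives
\begin{equation}
\|\partial_t^j\partial_x^k\bar\rho(t)\|_{L^p}^p=(1+t)^{-p(k/2+j)+\tfrac{1}{2}}\int_{0}^{+\infty}|F_{j,k}(\xi)|^p\,\mathrm{d}\xi,
\end{equation}
and the $\xi$-integral is finite uniformly in $t$ by the Gaussian tail. Taking $p$-th roots produces the asserted rate $(1+t)^{-k/2-j+1/(2p)}$, and the $p=\infty$ case is direct from the uniform Gaussian bound on $F_{j,k}$.

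The only substantive ingredient here is the pointwise Gaussian decay of the self-similar profile and its derivatives, which is classical and adopted from \cite{Duyn-Peletier1977, Hsiao-Liu1992, Nishihara-Yang1999} via analysis of the self-similar ODE $(q'(\bar\rho)\bar\rho')'+\tfrac{\alpha\xi}{2}\bar\rho'=0$ subject to $\bar\rho(0)=\rho_0(0)$, $\bar\rho(+\infty)=\rho_+$. Given this input, the remainder of the argument is a pure scaling computation and presents no additional obstacle; the main point worth verifying carefully is the induction delivering the structure of $F_{j,k}$, since everything downstream rests on it.
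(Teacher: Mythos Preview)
Your proposal is correct and matches the paper's approach: the paper does not give a detailed proof of Lemma~\ref{l4.1} but simply records the Gaussian decay $\sum_{k=1}^{5}|\bar\rho^{(k)}(\xi)|+|\bar\rho(\xi)-\rho_+|+|\bar\rho-\rho_0(0)|\leq C|\rho_+-\rho_0(0)|\exp(-c\alpha\xi^2)$ (citing \cite{Duyn-Peletier1977,Hsiao-Liu1992,Nishihara-Yang1999}) and then states that the lemma follows. Your scaling computation and comparison-principle argument are precisely the routine details that fill this in, so there is nothing to compare.
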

	\begin{remark}
		From  \eqref{1.2a}, one has that $ j+k\geq 1$, 
		\begin{equation}\label{14.2}
			\left\{
			\begin{array}{lr}
				\|\partial_t^j\partial_x^k\bar m(t)\|_{L^p}\leq C|\rho_+-\rho_0(0)|(1+t)^{-\frac{1}{2}-\frac{k}{2}-j+\frac{1}{2p}},\\[2mm]
				\|\partial_t^j\partial_x^k\bar\phi(t)\|_{L^p}\leq C|\rho_+-\rho_0(0)|(1+t)^{-\frac{k}{2}-j+\frac{1}{2p}}.
			\end{array}
			\right.
		\end{equation}
	\end{remark}
	
	From the definitions of $(\hat\rho,\hat m,\hat\phi)(x,t)$ in \eqref{3.8}--\eqref{3.8a}, we can immediately obtain the following lemma.
	\begin{lemma}\label{l4.2}
		For $1\leq p\leq +\infty$, and $k,j \in \mathbb{N}$, then we have
		\begin{equation}\label{14.3}
			\left\{
			\begin{array}{lr}
				\|\partial_x^k\partial_t^j(\hat\rho, \hat m_x, \hat\phi_x)(t)\|_{L^p}\lesssim \varepsilon_{0}^{k+1-\frac{1}{p}}{\rm e}^{-ct},\\[2mm]
				\|\partial_t^j(\hat m,\hat\phi)(t)\|_{L^{\infty}}\lesssim{\rm e}^{-ct}.
			\end{array}
			\right.
		\end{equation}
	\end{lemma}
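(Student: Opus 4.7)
The proof is a direct computation from the explicit formulas for $\hat\rho$, $\hat m$, $\hat\phi$ given in \eqref{3.8}--\eqref{3.8a}. The plan rests on three elementary observations to be recorded upfront: (a) for any smooth $h$, $\partial_x^k h(\varepsilon_0 x) = \varepsilon_0^k h^{(k)}(\varepsilon_0 x)$, so each spatial derivative extracts a factor of $\varepsilon_0$; (b) the change of variable $y=\varepsilon_0 x$ yields $\|h(\varepsilon_0\cdot)\|_{L^p(\mathbb{R}^+)} = \varepsilon_0^{-1/p}\|h\|_{L^p(\mathbb{R}^+)}$, which combined with $q_0\in C_0^\infty(\mathbb{R}^+)$ gives uniform-in-$\varepsilon_0$ control of all $L^p$ norms of $q_0^{(k)}(\varepsilon_0\cdot)$ up to the scaling factor; (c) every $t$-derivative of the time factors $e^{-\alpha t}$, $e^{-bt}$, $te^{-bt}$, and $(e^{-\alpha t}-e^{-bt})/(b-\alpha)$ is a finite linear combination of expressions of the form (polynomial in $t$)$\,\cdot\, e^{-\min\{\alpha,b\}t}$, hence bounded by $Ce^{-ct}$ for any $0<c<\min\{\alpha,b\}$.

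With these observations in hand, I would dispatch the four bounds by inspection. For $\hat\rho = -\frac{\varepsilon_0(m_0(0)-m_+)}{\alpha}\,q_0(\varepsilon_0 x)\,e^{-\alpha t}$, combining (a)--(c) gives $\|\partial_x^k\partial_t^j\hat\rho\|_{L^p}\lesssim \varepsilon_0\cdot\varepsilon_0^{k}\cdot\varepsilon_0^{-1/p}\cdot e^{-ct} = \varepsilon_0^{k+1-1/p}e^{-ct}$ immediately. Differentiating $\hat m$ once in $x$ produces $\hat m_x = -\varepsilon_0(m_0(0)-m_+)q_0(\varepsilon_0 x)e^{-\alpha t}$, which has the identical structure, and $\hat\phi_x$ in both cases $b=\alpha$ and $b\neq \alpha$ is a finite sum of terms of the same form, namely a factor $\varepsilon_0$ times $q_0(\varepsilon_0 x)$ times one of the time factors listed in (c). For the $L^\infty$ bounds on $\hat m$ and $\hat\phi$ themselves—recall these do not decay in $x$, approaching $m_+e^{-\alpha t}$ and $d_+e^{-bt}$ at infinity—uniform-in-$x$ estimates follow from $|\int_{\varepsilon_0 x}^{+\infty}q_0(y)\,dy|\leq\|q_0\|_{L^1}=1$ and $\|q_0(\varepsilon_0\cdot)\|_{L^\infty}\leq\|q_0\|_{L^\infty}$, and $t$-differentiation preserves the exponential decay by (c).

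The one piece of bookkeeping to keep straight is the case distinction $b=\alpha$ versus $b\neq\alpha$ in the formula for $\hat\phi$: in the former case one must absorb the extra polynomial factor $t$ into the exponential by choosing $c<b$, while in the latter the bounded factor $(e^{-\alpha t}-e^{-bt})/(b-\alpha)$ is already of the desired form. Either way the bound is identical, and no hard estimate is required. Accordingly, I do not anticipate any genuine analytic obstacle; the proof will read as a structured enumeration of terms, each reduced to the three observations above, rather than a single unified argument.
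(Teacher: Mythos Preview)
Your proposal is correct and matches the paper's approach exactly: the paper itself offers no argument beyond the sentence ``From the definitions of $(\hat\rho,\hat m,\hat\phi)(x,t)$ in \eqref{3.8}--\eqref{3.8a}, we can immediately obtain the following lemma,'' and your three observations (a)--(c) are precisely the elementary facts that make this immediate. One minor imprecision: the second summand of $\hat\phi_x$ carries an extra factor of $\varepsilon_0$ and involves $q_0'(\varepsilon_0 x)$ rather than $q_0(\varepsilon_0 x)$, but since $\varepsilon_0\le 1$ and $q_0'\in C_0^\infty$ this only improves the bound, so the stated estimate holds.
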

	With these preparations, similar to the Proposition \ref{p3.1}, we can prove the following proposition:
	\begin{proposition}\label{p3.2}
		Under the condition of Theorem \ref{thm2},  and further assume that $\delta:=\left|\rho_{0}(0)-\rho_+\right|+\varepsilon_0^{\frac{1}{2}}$ and $\|\varphi_{0}\|_{3}+\|\psi_{0}\|_{2}+\|\zeta_{0}\|_{4}$ are sufficiency small. 
		Then for any given $T>0$, there are positive constants $\varepsilon$ and $C$ such that if the solution $(\varphi, \zeta)(x, t)$ to \eqref{20}--\eqref{22} on $0\leq t\leq T$ satisfies
		\begin{align}\label{13.15}
		&\sup_{0\leq t\leq T}\left\{\sum_{k=0}^{2}(1+t)^{k}\|\partial_x^k\varphi(t)\|^2+\sum_{k=0}^{2}(1+t)^{k+2}\|\partial_x^k\varphi_t(t)\|^2+\sum_{k=0}^{1}(1+t)^{k+1}\|\partial_x^k[\zeta,\zeta_x](t)\|^2\notag\right.\\ 
		&\ \ \ \ \ \ \ \ \ \ \ \ \ \ \ \ \ \ \ \ \ \ \ \ \ \ \ \ \left.+\sum_{k=0}^{1}(1+t)^{k+3}\|\partial_x^k[\zeta_t,\zeta_{xt}](t)\|^2+(1+t)^{2}\|(\partial_x^3\varphi,\partial_x^3\zeta)(t)\|^2\right\}\leq \varepsilon^2,\notag
		\end{align}
		then we have
		\begin{align}\notag
			&\;\;\;\;\sum_{k=0}^{2}(1+t)^{k}\|\partial_x^k\varphi(t)\|^2+\sum_{k=0}^{1}(1+t)^{k+1}\|\partial_x^k[\zeta,\zeta_x](t)\|^2+\sum_{k=0}^{2}\int_{0}^{t}(1+\tau)^k\|\partial_x^k[\varphi_x,\zeta,\zeta_x](\tau)\|^2{\rm d}\tau\notag\\
			&\leq C(\|\varphi_0\|_3^2+\|\psi_0\|_2^2+\|\zeta_0\|_4^2+\delta),\notag
		\end{align}
		and
		\begin{align}
			&\;\;\;\;\sum_{k=0}^{2}(1+t)^{k+2}\|\partial_x^k\varphi_t(t)\|^2+\sum_{k=0}^{1}(1+t)^{k+3}\|\partial_x^k[\zeta_t,\zeta_{xt}](t)\|^2+(1+t)^{2}\|(\partial_x^3\varphi,\partial_x^3\zeta)(t)\|^2\notag\\
			&\;\;\;\;+\sum_{k=0}^{2}\int_{0}^{t}(1+\tau)^{k+1}\|\partial_x^k\varphi_{\tau}(\tau)\|^2{\rm d}\tau+\sum_{k=0}^{1}\int_{0}^{t}(1+\tau)^{k+2}\|\partial_x^k[\zeta_{\tau},\zeta_{x\tau}](\tau)\|^2{\rm d}\tau\notag\\
			&\leq C(\|\varphi_0\|_3^2+\|\psi_0\|_2^2+\|\zeta_0\|_4^2+\delta).\notag
		\end{align}
	\end{proposition}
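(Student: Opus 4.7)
The plan is to mirror the proof of Proposition \ref{p3.1} step by step, replacing the ingredients that are specific to the first boundary value problem by their analogs for the second one. Structurally, the reformulated system \eqref{3.10}--\eqref{3.12} is identical to \eqref{18}--\eqref{22}: the source terms $h$, $f$, $g$ have the same expressions, and the estimates in Lemma \ref{l4.1} and Lemma \ref{l4.2} on $(\bar\rho,\bar m,\bar\phi)$ and $(\hat\rho,\hat m,\hat\phi)$ provide the same decay rates (with $|\delta_0|$ simply replaced by $|\rho_0(0)-\rho_+|$) that were used in Lemmas \ref{l3.1}--\ref{l3.2}. Therefore the same sequence of energy multipliers, the same hierarchy of time-weighted estimates, and the same Gronwall closure should carry over.

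First I would reproduce the zeroth-order estimate of Lemma \ref{l3.3} by testing $\eqref{3.11}_1$ against $\varphi$ and $\varphi_t$, and $\eqref{3.11}_2$ against $\tfrac{\mu}{a}\bar\rho\zeta$ and $\tfrac{\mu}{a}\bar\rho\zeta_t$, combining the four identities so that the cross term $\mu\int\bar\rho\zeta\varphi_x\,{\rm d}x$ is absorbed via the positive definiteness of the matrix $P$ in Remark \ref{remark32}. Next I would differentiate once in $x$ (analog of Lemma \ref{l3.4}), then in $t$ and $xt$ (analog of Lemma \ref{l3.5}), and finally read the third spatial derivatives off the equations themselves (analog of Lemma \ref{lem53}). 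Each interior nonlinearity is estimated exactly as before by Cauchy's inequality, Sobolev embedding and Lemmas \ref{l4.1}--\ref{l4.2}; the Gronwall step at the end is verbatim.

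The principal obstacle, and essentially the only place where a genuine change is required, is the treatment of boundary contributions. In Section \ref{s3} the relations $\varphi(0,t)=\varphi_{xx}(0,t)=\zeta_x(0,t)=0$ (together with \eqref{13a}) were repeatedly used to kill traces at $x=0$; for \eqref{3.10}--\eqref{3.12} we instead have $\varphi_x(0,t)=\zeta(0,t)=0$ and, differentiating $\eqref{3.10}_3$ at $x=0$ together with \eqref{3.9}, the higher-order identities $\zeta_{xx}(0,t)=\varphi_{xt}(0,t)=\zeta_t(0,t)=0$, etc. I would verify case by case that every integration by parts performed in Lemmas \ref{l3.3}--\ref{lem53} still produces a vanishing boundary term under this new set of traces. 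For instance, terms of the form $\int(\bar\rho\zeta)_x\varphi\,{\rm d}x$ and $\int(\bar\rho\zeta)_{xx}\varphi_{xt}\,{\rm d}x$, which in Section \ref{s3} were handled by exploiting $\varphi(0,t)=0$, must now be recast by integrating in the opposite direction and exploiting $\zeta(0,t)=0$ and $\zeta_{xx}(0,t)=0$; similarly $\int p'(\bar\rho)\varphi_{xx}\varphi_t\,{\rm d}x$-type terms generate a trace $[p'(\bar\rho)\varphi_x\varphi_t]_{x=0}$ which now vanishes by $\varphi_x(0,t)=0$ (instead of $\varphi(0,t)=0$).

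Once the boundary bookkeeping is in order, the interior estimates are identical in structure to \eqref{I1}--\eqref{J17} and \eqref{13.48}--\eqref{13.61}, the differential inequalities \eqref{13.43}, \eqref{13.62}, \eqref{421k1} and \eqref{13.71} are reproduced unchanged, and the same $(1+t)^k$-weighted integration followed by Gronwall's inequality yields the desired bounds. The corresponding statements for Theorem \ref{thm3} then follow by specializing to the trivial background $(\bar\rho,\bar m,\bar\phi)\equiv(\rho_+,0,\tfrac{a}{b}\rho_+)$, which only simplifies the estimates (all terms carrying $\bar\rho_x$, $\bar\rho_t$ etc. now vanish identically), so no further work is needed.
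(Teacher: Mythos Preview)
Your proposal is correct and follows exactly the approach the paper itself takes: the paper simply states that, with Lemmas \ref{l4.1}--\ref{l4.2} in hand, the proof is ``similar to the Proposition \ref{p3.1}'' and gives no further details. Your discussion of the boundary bookkeeping (exploiting $\varphi_x(0,t)=\zeta(0,t)=0$ in place of $\varphi(0,t)=\zeta_x(0,t)=0$) is in fact more detailed than anything the paper provides.
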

	With this proposition, we can immediately get Theorem \ref{thm2}.

	\vspace{6mm}

	 {\bf Conflict of Interest:} The authors declared that they have no conflict of interest.

\vspace{3mm}
	{\bf Acknowledgments} The research was supported by the National Natural Science Foundation of China \#12401283 and the Natural Science Foundation of Hubei Province \#2024AFB208.
	
	\bigbreak
	
	{\small
		\bibliographystyle{plain}
		
	}
\end{document}